\newtheorem{theorem}[subsection]{Theorem}
\newtheorem{proposition}[subsection]{Proposition}
\newtheorem{lemma}[subsection]{Lemma}
\theoremstyle{definition}
\newtheorem{definition}[subsection]{Definition}
\newtheorem{question}[subsection]{Question}
\newtheorem{remark}[subsection]{Remark}
\numberwithin{equation}{subsection}
\begin{document}

\title {$p$-adic GKZ hypergeometric complex}
\thanks{We would like to thank the referee for careful reading of the paper and for many 
suggestions improving the paper. The research of Lei Fu is supported by NSFC12171261 and 2021YFA 1000700.}

\author{Lei Fu, Peigen Li, Daqing Wan and Hao Zhang}
\address{Yau Mathematical Sciences Center, Tsinghua University, Beijing 100084, P. R. China}
\email{leifu@tsinghua.edu.cn}
\address{Department of Mathematics, Tsinghua University, Beijing 100084, P. R. China}
\email{lpg22@bimsa.cn}
\address{Department of Mathematics, University of California, Irvine, CA 92697}
\email{dwan@math.uci.edu}
\address{Yau Center, Southeast University, Nanjing 210096, P. R. China}
\email{hzhang21@seu.edu.cn}

\date{}
\maketitle

\begin{abstract}
To a torus action on a complex vector space, Gelfand, Kapranov and
Zelevinsky introduce a system of differential equations, which are now called the
GKZ hypergeometric system. Its solutions are GKZ hypergeometric
functions. We study the $p$-adic counterpart of the GKZ
hypergeometric system. The $p$-adic GKZ hypergeometric 
complex is a twisted relative de Rham complex of overconvergent
differential forms with logarithmic poles. It is an over-holonomic object in the derived category of 
arithmetic $\mathcal D$-modules with Frobenius structures. Traces
of Frobenius on fibers at Techm\"uller points of the GKZ hypergeometric 
complex
define the hypergeometric function over the finite
field introduced by Gelfand and Graev. 
Over the non-degenerate locus, the GKZ hypergeometric 
complex defines an overconvergent $F$-isocrystal. It is
the crystalline companion of the $\ell$-adic GKZ hypergeometric sheaf that we constructed before. Our method is 
a combination of Dwork's theory and the theory of arithmetic $\mathcal D$-modules of Berthelot. 

\medskip
\noindent {\bf Key words:} GKZ hypergeometric $F$-isocrystal, arithmetic $\mathcal D$-module, 
Dwork trace formula.

\medskip
\noindent {\bf Mathematics Subject Classification:} Primary 14F30;
Secondary 11T23, 14G15, 33C70.

\end{abstract}

\section*{Introduction}

\subsection{The GKZ hypergeometric system}
Let $$A=\left(\begin{array}{ccc} w_{11}&\cdots&w_{1N}\\
\vdots&&\vdots\\
w_{n1}&\cdots&w_{nN}\end{array} \right)$$ be an $(n\times N)$-matrix
of rank $n$ with integer entries. Denote the column vectors of $A$
by ${\mathbf w}_1,\ldots, {\mathbf w}_N\in \mathbb Z^n$. It defines
an action of the $n$-dimensional torus $\mathbb T_{\mathbb
Z}^n=\mathrm{Spec}\, \mathbb Z[t_1^{\pm 1},\ldots, t_n^{\pm 1}]$ on
the $N$-dimensional affine space $\mathbb A_{\mathbb
Z}^N=\mathrm{Spec}\, \mathbb Z[x_1,\ldots, x_N]$:
$$
\mathbb T_{\mathbb Z}^n\times \mathbb A_{\mathbb Z}^N \to \mathbb
A_{\mathbb Z}^N, \quad \big((t_1,\ldots, t_n),(x_1,\ldots,
x_N)\big)\mapsto (t_1^{w_{11}}\cdots t_n^{w_{n1}}x_1,\ldots,
t_1^{w_{1N}}\cdots t_n^{w_{nN}}x_N).
$$
Let $\gamma_1,\ldots, \gamma_n\in \mathbb C$. 
In \cite{GKZ1}, Gelfand, Kapranov and Zelevinsky define
the \emph{$A$-hypergeometric system} to be the system of
differential equations
\begin{eqnarray}\label{GKZeqn}
\begin{array}{l}
\sum_{j=1}^N w_{ij} x_j\frac{\partial f}{\partial x_j}+\gamma_i
f=0 \quad (i=1,\ldots, n),\\
\prod_{\lambda_j>0} \left(\frac{\partial}{\partial x_j}\right)^{\lambda_j}f=
\prod_{\lambda_j<0} \left(\frac{\partial}{\partial x_j}\right)^{-\lambda_j}f,
\end{array}
\end{eqnarray}
where for the second system of equations, $(\lambda_1,\ldots,
\lambda_N)\in\mathbb Z^N$ goes over the family of integral linear
relations
$$\sum_{j=1}^N
\lambda_j{\mathbf w}_j=0$$ among ${\mathbf w}_1,\ldots, {\mathbf w}_N$. We now call
the $A$-hypergeometric system the \emph{GKZ hypergeometric system}. 
An  integral representation of a solution of the GKZ hypergeometric system is given by
\begin{eqnarray}\label{intrepgkz}
f(x_1,\ldots, x_N)=\int_{\Sigma} t_1^{\gamma_1}\cdots
t_n^{\gamma_n} e^{\sum_{j=1}^N x_jt_1^{w_{1j}}\cdots
t_n^{w_{nj}}}\frac{dt_1}{t_1}\cdots \frac{dt_n}{t_n}
\end{eqnarray}
where $\Sigma$ is a real $n$-dimensional cycle in $\mathbb T^n$. Confer 
\cite[equation (2.6)]{A1}, \cite[section 3]{Fu1} and
\cite[Corollary 2 in \S 4.2]{GGR1}. 

\subsection{The GKZ hypergeometric function over a finite field} 
Let $p$ be a prime number, $q$ a power of $p$, $\mathbb F_q$ the 
finite field with $q$ elements, 
$\psi:
\mathbb F_q\to\overline{\mathbb Q}^\ast$ a nontrivial additive character, and 
$\chi_1,\ldots, \chi_n:\mathbb F_q^\ast\to \overline {\mathbb Q}^\ast$
multiplicative characters. In \cite{GG}
and \cite{GGR}, Gelfand and Graev define the \emph{hypergeometric
function over the finite field} $\mathbb F_q$ to be the function defined by the family of twisted exponential sums
\begin{eqnarray}\label{finitegkz}
\mathrm{Hyp}(x_1,\ldots, x_N)
=\sum_{t_1,\ldots, t_n\in \mathbb F_q^\ast}\chi_1(t_1)\cdots
\chi_n(t_n)\psi\Big( \sum_{j=1}^N x_j t_1^{w_{1j}}\cdots
t_n^{w_{nj}}\Big),
\end{eqnarray} where $(x_1, \ldots, x_N)$ varies in $\mathbb A^N(\mathbb F_q)$. 
It is an arithmetic analogue of the expression (\ref{intrepgkz}).

In \cite{Fu2}, we introduce the $\ell$-adic GKZ hypergeometric sheaf  $\mathrm{Hyp}$
which is a perverse sheaf on $\mathbb A_{\mathbb F_q}^N$ such that for any rational point
$x=(x_1,\ldots, x_N)\in \mathbb A^N(\mathbb F_q)$, we have
\begin{eqnarray}\label{ladicsum}
\mathrm{Hyp}(x_1,\ldots, x_N)=
(-1)^{n+N}\mathrm{Tr}(\mathrm
{Frob}_x,\mathrm{Hyp}_{\bar x}),
\end{eqnarray} where
$\mathrm{Frob}_x$ is the geometric Frobenius at $x$. In this paper, we study the
crystalline companion of the $\ell$-adic GKZ hypergeometric sheaf. 

\subsection{The $p$-adic GKZ hypergeometric complex} 
For any $\mathbf v=(v_1, \ldots, v_N)\in\mathbb Z_{\geq 0}^N$ and $\mathbf w=(w_1, \ldots, w_n)\in \mathbb Z^n$, 
write $$\mathbf x^{\mathbf v}=x_1^{v_1}\cdots x_N^{v_N},\quad \mathbf t^{\mathbf w}=t_1^{w_1}\cdots t_n^{w_n}, \quad \vert\mathbf v\vert =v_1+\cdots+
 v_N,
\quad \vert\mathbf w\vert =|w_1|+\cdots+|w_n|.$$ 
Fix an algebraic closure $\overline{\mathbb Q}_p$ of $\mathbb Q_p$. Throughout this paper, all finite extensions of $\mathbb Q_p$ 
are taken inside $\overline{\mathbb Q}_p$. 
Denote by $\vert\cdot\vert$ the $p$-adic norm on $\overline{\mathbb Q}_p$ 
defined by $\vert a\vert=p^{-\mathrm{ord}_p(a)}$. Let $\pi\in\overline{\mathbb Q}_p$ be an element satisfying 
$$\pi^{p-1}+p=0,$$ $K$ a finite extension of $\mathbb Q_p$ containing $\pi$, $R$ the discrete valuation ring of $K$, and 
$\kappa=\mathbb F_q$ the residue field of $R$. To apply the theories of rigid cohomology theory and arithmetic $\mathcal D$-modules, we need to fix 
a lift to $K$ of the Frobenius substitution $x\mapsto x^q$ in $\kappa$. But on $\kappa=\mathbb F_q$, the Frobenius substitution is identity. We choose 
the identity on $K$ as the lift in this paper. 
For each real number $r>0$, consider the algebras 
\begin{eqnarray*}
K\{r^{-1}\mathbf x\}
&=&\{\sum_{\mathbf v\in\mathbb Z^N_{\geq 0}} a_{\mathbf v} \mathbf x^{\mathbf v}:\; a_{\mathbf v}\in K,\;
\vert a_{\mathbf v}\vert r^{\vert \mathbf v\vert} \hbox { are bounded}\},\\
K\langle r^{-1}\mathbf x\rangle 
&=&\{\sum_{\mathbf v\in\mathbb Z^N_{\geq 0}} a_{\mathbf v} \mathbf x^{\mathbf v}:\; a_{\mathbf v}\in K,\;
\lim_{\vert \mathbf v\vert\to \infty}\vert a_{\mathbf v}\vert r^{\vert \mathbf v\vert}=0\}. 
\end{eqnarray*} 
They are Banach $K$-algebras with respect to the norm
$$\Vert\sum_{\mathbf v\in\mathbb Z^N_{\geq 0}} a_{\mathbf v} \mathbf x^{\mathbf v}\Vert_r=\sup \vert a_{\mathbf v} \vert r^{\vert \mathbf v\vert}.$$ 
We have $K\langle r^{-1}\mathbf x\rangle\subset K\{r^{-1}\mathbf x\}.$  
Elements in $K\langle r^{-1}\mathbf x\rangle$ are exactly those power series  converging in the closed polydisc 
$$D(0, r^+)^N=\{(x_1, \ldots, x_N):\; x_i\in \overline{\mathbb Q}_p,\; \vert x_i\vert \leq r\}.$$ Moreover, for any $r<r'$, we have
$$K\{r'^{-1}\mathbf x\}\subset K\langle r^{-1}\mathbf x\rangle\subset K\{ r^{-1}\mathbf x\}.$$
Let 
$$ K\langle\mathbf x\rangle^\dagger=\bigcup_{r>1} K\{r^{-1}\mathbf x\}=\bigcup_{r>1}K\langle r^{-1}\mathbf x\rangle.$$  
$K\langle\mathbf x\rangle^\dagger$ is the ring of \emph{overconvergent}
power series, that is, series converging in closed polydiscs of radii $>1$.  The main goal of this paper is to construct the GKZ hypergeometric complex
$C^\cdot(L^\dagger)$ of $K\langle \mathbf x\rangle^\dagger$-modules with connections $\nabla$, and a Frobenius structure 
given by a horizontal morphism $$F: \mathrm{Fr}^*(C^\cdot(L^\dagger), \nabla)\to (C^\cdot(L^\dagger), \nabla),$$ where 
$\mathrm{Fr}:D(0, 1^+)^N
\to D(0, 1^+)^N$ is the morphism $(x_1, \ldots, x_N)\mapsto (x_1^q,\ldots, x_N^q)$. See Propositions \ref{estimation} and \ref{FG}. 
For any $\bar{\mathbf a}= 
(\bar a_1, \ldots, \bar a_N)\in \mathbb A^N(\mathbb F_q)$, 
let $\mathbf a=(a_1, \ldots, a_N)\in D(0,1^+)^N$ be its Techm\"uller lifting. In Theorem \ref{arithmetic}, we prove the following 
$p$-adic analogue of the formula (\ref{ladicsum}):
$$\mathrm{Hyp}(\bar a_1, \ldots, \bar a_N)=\mathrm{Tr}\Big(q^n F_{\mathbf a}^{-1}, C^\cdot(L^\dagger)\otimes^L_{K\langle \mathbf x\rangle^\dagger} K(\mathbf a)\Big),$$
where $K(\mathbf a)=K$ is regarded as a $K\langle \mathbf x\rangle^\dagger$-algebra via the homomorphism 
$$K\langle \mathbf x\rangle^\dagger\to K, \quad x_i\mapsto a_i.$$ 
Over the non-degenerate locus $U$ which we will make precise later, we prove in Theorem \ref{convergent}
that $H^i(C^\cdot(L^\dagger))|_U=0$ for $i\not =n$, and 
$(H^n(C^\cdot(L^\dagger))|_U,\nabla)$ is an overconvergent $F$-isocrystal.   
The connection $\nabla$ makes $C^\cdot(L^\dagger)$ a complex of arithmetic $\mathcal D$-modules
defined by Berthelot. In Proposition \ref{overholonomic}, we prove  $C^\cdot(L^\dagger)$ is over-holonomic in the sense of Caro.  

\medskip
We now give the detailed construction. 
Let $\delta$ be a rational convex polyhedral cone in $\mathbb R^n$ containing $\mathbf w_1, \ldots, \mathbf w_N$. 
For any real numbers $r>1$ and $s>1$, define
\begin{eqnarray*}
L(r,s) &=&\{\sum_{\mathbf w\in\mathbb Z^n\cap\delta} a_{\mathbf w}(\mathbf x) \mathbf t^{\mathbf w}:\;  a_{\mathbf w}(\mathbf x) \in K
\{r^{-1}\mathbf  x\},  
\; \Vert a_{\mathbf w}(\mathbf x)\Vert_r s^{\vert\mathbf w\vert} \hbox { are bounded}\}\\
&=&\{\sum_{\mathbf v\in\mathbb Z^N_{\geq 0},\; 
\mathbf w\in\mathbb Z^n\cap\delta} a_{\mathbf v\mathbf w} \mathbf x^{\mathbf v}\mathbf t^{\mathbf w}:\;  a_{\mathbf v\mathbf w}\in K,
\; \vert a_{\mathbf v\mathbf w}\vert r^{\vert \mathbf v\vert} s^{\vert \mathbf w\vert} \hbox { are bounded}\},\\
L^\dagger&=& \bigcup_{r>1,\;s>1} L(r,s)
\end{eqnarray*}
Note that $L(r,s)$ and 
$L^\dagger$ are rings. 
Let $\gamma=(\gamma_1, \ldots, \gamma_n)$ be an $n$-tuple of rational numbers, and let
$$F(\mathbf x, \mathbf t)=\sum_{j=1}^N x_j t_1^{w_{1j}}\cdots t_n^{w_{nj}},\quad
\mathbf t^\gamma=t_1^{\gamma_1}\cdots t_n^{\gamma_n}.$$ 
Consider the \emph{logarithmic twisted de Rham complex} $C^\cdot( L^\dagger)$ defined as follows: We set
$$C^k( L^\dagger)=
\{\sum_{1\leq i_1<\cdots < i_k\leq n}f_{i_1\ldots i_k}(\mathbf x, \mathbf t)
\frac{\mathrm dt_{i_1}}{t_{i_1}}\wedge \cdots\wedge \frac{\mathrm dt_{i_k}}{t_{i_k}}:\;f_{i_1\ldots i_k}(\mathbf x,\mathbf t)\in  L^\dagger\}$$ 
with differential $d: C^k( L^\dagger)\to C^{k+1}( L^\dagger)$ given by
\begin{eqnarray*}
d(\omega)&=&\big(\mathbf t^\gamma \exp(\pi F(\mathbf x,\mathbf t))\big)^{-1} \circ \mathrm d_{\mathbf t} \circ 
\big( \mathbf t^\gamma\exp(\pi F(\mathbf x,\mathbf t))\big)(\omega)
\\&=&\mathrm d_{\mathbf t}\omega + \sum_{i=1}^n \Big(\gamma_i+\sum_{j=1}^N \pi w_{ij}x_j\mathbf t^{\mathbf w_j}\Big)
\frac{\mathrm dt_i}{t_i}\wedge \omega
\end{eqnarray*}
for any $\omega\in C^k( L^\dagger)$, where $\mathrm d_{\mathbf t}$ is the exterior differentiation with respect to the $\mathbf t$ variable. 

Let $\Omega^{1}_{K\langle\mathbf x\rangle^\dagger}$
be the free $K\langle\mathbf x\rangle^\dagger$-module
with basis $\mathrm dx_1, \ldots, \mathrm dx_N$. We have an integrable connection
\begin{eqnarray}\label{nabla}
\nabla: C^\cdot ( L^\dagger)\to C^\cdot( L^\dagger) 
\otimes_{K\langle\mathbf x\rangle^\dagger}\Omega^1_{K\langle\mathbf x\rangle^\dagger}
\end{eqnarray}
defined by
\begin{eqnarray*}
\nabla(\omega) &=& \big(\mathbf t^\gamma \exp(\pi F(\mathbf x,\mathbf t))\big)^{-1} \circ \mathrm d_{\mathbf x} \circ 
\big( \mathbf t^\gamma\exp(\pi F(\mathbf x,\mathbf t))\big)(\omega)\\
&=& \mathrm d_{\mathbf x}\omega+\pi \mathrm d_{\mathbf x}F\wedge \omega\\
&=& \mathrm d_{\mathbf x}\omega+\sum_{j=1}^N\pi \mathbf t^{\mathbf w_j} \mathrm dx_j \wedge \omega,
\end{eqnarray*}
where $\mathrm d_{\mathbf x}$ is the exterior differentiation with respect to the $\mathbf x$ variable.
Since $\mathrm d_{\mathbf x}$ commutes with $\mathrm d_{\mathbf t}$,  
$\nabla$ commutes with $d: C^k( L^\dagger)\to C^{k+1}( L^\dagger)$, and hence $\nabla$ induces connections on cohomology
groups of $C^\cdot( L^\dagger)$.
 
Consider the Frobenius map in the variable $\mathbf t$ defined by 
$$\Phi(f({\mathbf x},{\mathbf t}))=f(\mathbf x, \mathbf t^q).$$
One verifies directly that
$\Phi(L(r, s))\subset L(r, \sqrt[q]{s})$ and hence $\Phi(L^\dagger)\subset L^\dagger$. 
It induces maps $\Phi: C^k( L^\dagger)\to C^k( L^\dagger)$ on differential forms commuting with $\mathrm d_{\mathbf t}$: 
$$\Phi\Big(\sum_{1\leq i_1<\cdots < i_k\leq n}f_{i_1\ldots i_k}(\mathbf x, \mathbf t)\frac{\mathrm dt_{i_1}}{t_{i_1}}\wedge \cdots\wedge \frac{\mathrm dt_{i_k}}
{t_{i_k}}\Big)
=\sum_{1\leq i_1<\cdots < i_k\leq n}q^kf_{i_1\ldots i_k}(\mathbf x, \mathbf t^q)\frac{\mathrm dt_{i_1}}{t_{i_1}}\wedge \cdots\wedge 
\frac{\mathrm dt_{i_k}}{t_{i_k}}.$$
Suppose furthermore that
$(q-1)\gamma\in \mathbb Z^n\cap \delta$.  Consider the maps $F: C^k( L^\dagger)\to C^k( L^\dagger)$ defined by
\begin{eqnarray}\label{FROB}
F&=&\big(\mathbf t^\gamma \exp (\pi F(\mathbf x, \mathbf t))\big)^{-1}
\circ\Phi\circ\big(\mathbf t^\gamma \exp (\pi F(\mathbf x^{q}, \mathbf t))\big)\\
\label{F} &=&\mathbf t^{(q-1)\gamma}\exp \big(\pi F(\mathbf x^q,\mathbf t^q)-
\pi F(\mathbf x, \mathbf t)\big)
\circ\Phi.
\end{eqnarray}
Even though $\mathbf t^\gamma\exp (\pi F(\mathbf x, \mathbf t))$ does not lie in 
$L^\dagger$ and multiplication by it does not define an endomorphism on $C^\cdot(L^\dagger)$, Proposition
\ref{estimation} (i) below
shows that $\exp \big(\pi F(\mathbf x^q,\mathbf t^q)-
\pi F(\mathbf x, \mathbf t)\big)$ lies in $L^\dagger$, and hence the expression (\ref{F}) shows that 
$F$ defines an endomorphism on each $C^k( L^\dagger)$. 

\begin{proposition}\label{estimation} ${}$

(i) Let $w=\max(\vert\mathbf w_1\vert, \ldots, \vert\mathbf w_N\vert)$. Both $\exp \big(\pi F(\mathbf x^q,\mathbf t^q)-
\pi F(\mathbf x, \mathbf t)\big)$ and $\exp\big(\pi F(\mathbf x,\mathbf t)- \pi F(\mathbf x^q,\mathbf t^{q})\big)$ 
lie in $L(r,r^{-\frac{1}{w}} p^{\frac{p-1}{pqw}})$ for any $1<r\leq p^{\frac{p-1}{pq}}$.

(ii) Suppose $(q-1)\gamma\in \mathbb Z^n\cap \delta$. Consider the $K$-algebra homomorphism 
$$\mathrm{Fr}^\natural: K\langle \mathbf x\rangle\to K\langle \mathbf x\rangle, \quad x_j\mapsto x_j^q.$$ 
Let $$\mathrm{Fr}^*(C^\cdot( L^\dagger), \nabla)=(C^\cdot( L^\dagger), \nabla)\otimes_{K\langle \mathbf x\rangle^\dagger, \mathrm{Fr}^\natural} 
K\langle \mathbf x\rangle^\dagger.$$ Then $F$ defines a horizontal morphism of complexes of $K\langle \mathbf x\rangle^\dagger$-modules with connections
$$F: \mathrm{Fr}^*(C^\cdot( L^\dagger), \nabla) \to (C^\cdot( L^\dagger), \nabla).$$
\end{proposition}
 
Let $\Psi'$ be the substitution $\mathbf t\mapsto \mathbf t^{\frac {1}{q}}$:
$$\Psi'\big(\sum_{\mathbf w}a_{\mathbf w}(\mathbf x)\mathbf t^{\mathbf w}\big)
=\sum_{\mathbf w}a_{\mathbf w}(\mathbf x)\mathbf t^{\frac{\mathbf w}{q}},$$
and let 
$$\mathrm{pr}:K\langle\mathbf x\rangle^\dagger\big[\big[\frac{1}{q}\mathbb Z^n\cap\delta\big]\big]\to 
K\langle\mathbf x\rangle^\dagger[[\mathbb Z^n\cap\delta]]
$$ be the $K\langle\mathbf x\rangle^\dagger$-linear 
map defined by 
$$\mathrm{pr}(\mathbf t^{\mathbf w})= \left\{ \begin{array}{cl}
\mathbf t^{\mathbf w}&\hbox{if } \mathbf w\in \mathbb Z^n\cap\delta, \\
0&\hbox{otherwise}.
\end{array}\right.$$ Consider the operator $\Psi: L^\dagger \to 
 L^\dagger$ defined by 
$$\Psi=\mathrm{pr}\circ \Psi'.$$
We extend $\Psi'$ and $\Psi$ to differential forms by
\begin{eqnarray*}
\Psi'\Big(\sum_{1\leq i_1<\cdots < i_k\leq n}f_{i_1\ldots i_k}(\mathbf x, \mathbf t)\frac{\mathrm dt_{i_1}}{t_{i_1}}\wedge \cdots\wedge \frac{\mathrm dt_{i_k}}
{t_{i_k}}\Big)
&=&\sum_{1\leq i_1<\cdots < i_k\leq n}q^{-k} f_{i_1\ldots i_j}(\mathbf x,\mathbf t^{\frac{1}{q}})\frac{\mathrm dt_{i_1}}{t_{i_1}}\wedge \cdots\wedge 
\frac{\mathrm dt_{i_k}}{t_{i_k}},\\
\Psi\Big(\sum_{1\leq i_1<\cdots < i_k\leq n}f_{i_1\ldots i_k}(\mathbf x, \mathbf t)\frac{\mathrm dt_{i_1}}{t_{i_1}}\wedge \cdots\wedge \frac{\mathrm dt_{i_k}}
{t_{i_k}}\Big)
&=&\sum_{1\leq i_1<\cdots < i_k\leq n}q^{-k} \Psi(f_{i_1\ldots i_j}(\mathbf x,\mathbf t))\frac{\mathrm dt_{i_1}}{t_{i_1}}\wedge \cdots\wedge 
\frac{\mathrm dt_{i_k}}{t_{i_k}}.
\end{eqnarray*}
They commute with $\mathrm d_{\mathbf t}$. Suppose $(1-q)\gamma\in \mathbb Z^n \cap \delta$. 
Define \emph{the Dwork opertor} $G: C^\cdot( L^\dagger)
\to C^\cdot( L^\dagger)$ by 
\begin{eqnarray*}
G&=&\mathrm{pr}\circ \big( \mathbf t^\gamma \exp(\pi F(\mathbf x^q,\mathbf t))\big)^{-1} \circ \Psi' \circ 
\big(\mathbf t^\gamma \exp(\pi F(\mathbf x,\mathbf t))\big)\\
&= & \Psi \circ \big(\mathbf t^{(1-q)\gamma}\exp\big(\pi F(\mathbf x,\mathbf t)- 
\pi F(\mathbf x^q, \mathbf t^{q})\big)\big).
\end{eqnarray*}
By Proposition \ref{estimation} (i), $\exp\big(\pi F(\mathbf x,\mathbf t)- 
\pi F(\mathbf x^q, \mathbf t^{q})\big)$ lies in $L^\dagger$ and hence $G$ defines an operator on $ C^\cdot(L^\dagger)$. 

\begin{proposition}\label{FG} ${}$

(i) Suppose $(1-q)\gamma\in \mathbb Z^n \cap \delta$. Then $G$ defines a horizontal morphism of complexes of 
$K\langle \mathbf x\rangle^\dagger$-modules with connections
$$G: (C^\cdot(  L^\dagger), \nabla)\to \mathrm{Fr}^*(C^\cdot(  L^\dagger), \nabla).$$

(ii) Suppose $\pm (1-q)\gamma\in \mathbb Z^n \cap \delta$. 
We have $G\circ  F=\mathrm{id}$. Moreover, $F$ and $G$ induce isomorphisms 
on cohomology groups inverse to each other.
\end{proposition} 

\begin{definition} Suppose $\pm (1-q)\gamma \in \mathbb Z^n\cap \delta$.
The \emph{$p$-adic GKZ hypergeometric complex} is defined to be the tuple
$(C^\cdot(  L^\dagger), \nabla, F)$ consisting of the complex $C^\cdot(  L^\dagger)$ of $K\langle\mathbf x\rangle^\dagger$-modules with the connection $\nabla$ and the \emph{Frobenius structure} defined by 
the horizontal morphism $F: \mathrm{Fr}^*(C^\cdot(  L^\dagger), \nabla)\to 
(C^\cdot(  L^\dagger), \nabla).$
\end{definition}

Our construction of $C^\cdot(L^\dagger)$ makes sense under the assumption that 
$\delta$ is a rational convex polyhedral cone containing $\mathbf w_1, \ldots, \mathbf w_N$ and 
$\pm (1-q)\gamma\in\mathbb Z^n\cap\delta$. But in order for 
$C^\cdot(L^\dagger)$ to have the required properties, from now on, unless we state otherwise,  we take 
$$\delta=\{\lambda_1\mathbf w_1+\cdots+\lambda_N \mathbf w_N:\; \lambda_j\geq 0\}$$
 and we assume either $\gamma=0$ or $0$ lies in the interior of the convex hull 
of $\{0, \mathbf w_1, \ldots, \mathbf w_N\}$. 

\subsection{The GKZ hypergeometric $D^\dagger$-module} 

We study the GKZ hypergeometric complex using the arithmetic $\mathcal D$-module theory.
Let $\mathscr P_R^N$ be the formal projective space obtained by taking the formal completion of the projective
space $\mathbb P^N_R$ along the special fiber. We often omit the subscript $R$ in $\mathscr P_R^N$ for convenience. 
Let $\mathcal D^\dagger_{\mathscr P^N,\mathbb Q}(\infty)$ be the sheaf of differential operators of on $\mathscr P^N$ with singularities 
overconvergent along $\infty$. Here we fix a homogeneous coordinate system $[x_0:\cdots:x_N]$ on $\mathscr P^N$ and $\infty$ is the 
divisor defined by $x_0=0$. For the definition of this sheaf, see  \cite{B} or  \ref{d-module}. By \cite{Hu}, we have 
$\Gamma(\mathscr P^N, \mathcal D^\dagger_{\mathscr P^N, \mathbb Q}(\infty))=D^\dagger$, where 
$$D^\dagger=\bigcup_{r>1,\;s>1}
\{\sum_{\mathbf v\in\mathbb Z_{\geq 0}^N} f_{\mathbf v}(\mathbf x)\frac{\partial^{\mathbf v}}{\mathbf v_!}:\;
f_{\mathbf v}(\mathbf x)\in K\{ r^{-1}\mathbf x\},\; 
\Vert f_{\mathbf v}(\mathbf x)\Vert_r s^{\vert \mathbf v\vert} \hbox { are bounded}\},$$ 
$\partial^{\mathbf v}= \frac{\partial^{v_1+\cdots+v_N}}{\partial x_1^{v_1}\cdots\partial x_N^{v_N}}$ and $\mathbf v!=v_1!\cdots v_N!$ for any 
$\mathbf v=(v_1, \ldots, v_N)\in \mathbb Z^N_{\geq 0}$. In Proposition \ref{two_defn_D}, we show that 
$$D^\dagger=\bigcup_{r>1,\;s>1}
\{\sum_{\mathbf v\in\mathbb Z_{\geq 0}^N} f_{\mathbf v}(\mathbf x)\frac{\partial^{\mathbf v}}{\pi^{\vert \mathbf v\vert}}:\;
f_{\mathbf v}(\mathbf x)\in K\{ r^{-1}\mathbf x\},\; 
\Vert f_{\mathbf v}(\mathbf x)\Vert_r s^{\vert \mathbf v\vert} \hbox { are bounded}\}.$$ 
By a result in \cite{Hu}, $D^\dagger$ is a coherent ring. 
Let $\frac{\partial}{\partial x_j}\in D^\dagger$ act on $L^\dagger$ via 
\begin{eqnarray*}
\nabla_{\frac{\partial}{\partial x_j}}=\big(\mathbf t^\gamma \exp (\pi F(\mathbf x,\mathbf t))\big)^{-1}
\circ  \frac{\partial}{\partial x_j} \circ  \big(\mathbf t^\gamma\exp (\pi F(\mathbf x,\mathbf t))\big)
=\frac{\partial}{\partial x_j}+\pi \mathrm t^{\mathbf w_j}.
\end{eqnarray*} 

\begin{proposition}\label{coherent} 
$L^{\dagger}$ is
a coherent left $D^\dagger$-modules.
\end{proposition}

Thus $C^\cdot( L^{\dagger})$ is
a complex of coherent $D^\dagger$-modules. 
Denote by $D^b_{coh}(\mathcal D^\dagger_{\mathscr P^N, \mathbb Q}(\infty))$ the derive category of complexes of 
$\mathcal D^\dagger_{\mathscr P^N, \mathbb Q}(\infty)$-modules with coherent cohomology. By \cite[Th\'eor\`eme 5.3.3]{Hu} and 
Proposition \ref{coherent},  $C^\cdot(L^\dagger)$ defines an object in $D^b_{coh}(\mathcal D^\dagger_{\mathscr P^N, \mathbb Q}(\infty))$, which we denote 
by $C^\cdot(\mathcal L^{\dagger})$. 
Caro \cite[D\'efinition 3.1]{Overhol} defines $D^b_{ovhol}(\mathcal D^\dagger_{\mathscr P^N, \mathbb Q}(\infty))$ as the subcategory of 
$D^b_{coh}(\mathcal D^\dagger_{\mathscr P^N, \mathbb Q}(\infty))$ consisting of over-holonomic complexes. Denote by 
$F\hbox{-}D^b_{ovhol}(\mathcal D^\dagger_{\mathscr P^N, \mathbb Q}(\infty))$ 
the subcategory of 
$D^b_{ovhol}(\mathcal D^\dagger_{\mathscr P^N, \mathbb Q}(\infty))$ consisting of those objects with Frobenius structures
and morphisms compatible with the Frobenius structures. (Confer \cite[D\'efinition 5.1.1]{B2})

\begin{proposition}\label{overholonomic} 
$C^\cdot(\mathcal L^{\dagger})$  is an object in $F\hbox{-}D^b_{ovhol}(\mathcal D^\dagger_{\mathscr P^N, \mathbb Q}(\infty)).$
\end{proposition}

\begin{remark} Let $\mathscr A^N$ and $\mathscr T^n$ be the formal affine space and the formal torus, let
$$\mathrm{pr}_1: \mathscr T^n\times \mathscr A^N\to \mathscr T^n,\quad 
\mathrm{pr}_2: \mathscr T^n\times \mathscr A^N\to \mathscr A^N$$ be the projections and let 
$F: \mathscr T^n\times \mathscr A^N \to\mathscr A^1$ be the morphism defined by $F(\mathbf t, \mathbf x)$. One can use the method in
section 4 to show that 
$$C^\cdot(\mathcal L^{\dagger})\cong \mathrm{pr}_{2, +}(F^+\mathcal L_\psi \otimes\mathrm{pr}_1^+\mathcal K_\chi)[2N-1],$$ 
where $\mathcal L_\psi$ is the Dwork isocrystal  associated to $\psi$ and $\mathcal K_\chi$ is the Kummer
isocrystal associated to $\gamma$. The six functor formalism of arithmetic $\mathcal D$-modules then implies Proposition 
\ref{overholonomic}. In this paper, we don't use this approach and prove Proposition \ref{overholonomic} 
directly. 
\end{remark}

\begin{remark} The left $D^\dagger$-module
$$D^\dagger/(\sum_{i=1}^n D^\dagger E_{i,\gamma}+\sum_{\lambda\in \Lambda}D^\dagger \Box_\lambda)$$
is the $p$-adic analogue of the (complex) hypergeometric $D$-module (\cite{A1}) associated to the GKZ hypergeometric system of differential equations
(\ref{GKZeqn}). However, this $D$-module is not directly related to $C^\cdot(L^\dagger)$. 
In \ref{GKZprime}, we introduce a subcomplex $C^\cdot(L^{\dagger\prime})$ of $C^\cdot(L^{\dagger})$, and prove that 
$$H^n(C^\cdot(L^{\dagger\prime}))\cong D^\dagger/(\sum_{i=1}^n D^\dagger E_{i,\gamma}+\sum_{\lambda\in \Lambda}D^\dagger \Box_\lambda).$$ 
See Proposition \ref{grobner}. 
In general, the action of $G$ on $C^\cdot(L^{\dagger})$ does not preserve $C^\cdot(L^{\dagger\prime})$, and the above $D^\dagger$-module does not seem to
have a Frobenius structure with interesting arithmetic properties. 
\end{remark}

\subsection{The GKZ hypergeometric $F$-isocrystal} 

Let 
$$F_{i, \gamma}=\big(\mathbf t^\gamma\exp(\pi F(\mathbf x, \mathbf t))\big)^{-1}\circ t_i\frac{\partial}{\partial t_i}\circ 
\big(\mathbf t^\gamma\exp(\pi F(\mathbf x, \mathbf t))\big)=t_i\frac{\partial}{\partial t_i}+\gamma_i+\pi\sum_{j=1}^N w_{ij}x_j\mathbf t^{\mathbf w_j}.$$
It follows from the definition of the logarithmic twisted de Rham complex that the homomorphism 
$$ L^{\dagger}\to C^n( L^{\dagger}),\quad f(\mathbf x, \mathbf t)\mapsto f(\mathbf x, \mathbf t)\frac{\mathrm dt_1}{t_1}\wedge \cdots\wedge \frac{\mathrm dt_n}{t_n}$$
induces an isomorphism
 $$ L^{\dagger}
/ \sum_{i=1}^n F_{i,\gamma} L^{\dagger}\cong H^n(C^\cdot( L^{\dagger})).
$$ The connection (\ref{nabla})  defines a connection $\nabla$ on $H^n(C^\cdot( L^\dagger))$, and the morphism (\ref{FROB}) 
defines a horizontal isomorphism 
$$F: \mathrm{Fr}^* (H^n(C^\cdot( L^\dagger)),\nabla)\to   (H^n(C^\cdot( L^\dagger)),\nabla).$$

\begin{definition}
Let $\Delta$ be the convex hull of $\{0,\mathbf w_1, \ldots,\mathbf w_N\}$ in $\mathbb R^n$ and $\kappa=\mathbb F_q$ the residue field of the integer ring
of $K$. The \emph{nondegenerate locus} $U_0$ is the Zariski open subset of $\mathbb A^n_\kappa$ consisting of 
those points $\bar{\mathbf a}=(\bar a_1, \ldots,\bar a_N)$ so that 
$F(\bar {\mathbf a}, \mathbf t)=\sum_{j=1}^N \bar a_j \mathbf  t^{\mathbf w_j}$ 
is \emph{non-degenerate} in the sense that for any face $\tau$ of $\Delta$ not containing the origin, the system of equations 
$$\frac{\partial}{\partial t_1}F_\tau(\bar{\mathbf a},\mathbf t)=\cdots =\frac{\partial}{\partial t_n}F_\tau(\bar{\mathbf a},\mathbf t)=0$$ has no solution 
in $(\bar \kappa^\ast)^n$, where $F_\tau(\bar{\mathbf a},\mathbf t)=\sum_{\mathbf w_j\in \tau}\bar a_j \mathbf t^{\mathbf w_j}$. 
\end{definition}

\subsection{} Let $U=\text{sp}^{-1}(U_0)$, where $$\text{sp}: D(0,1^+)^N\to \mathbb A_\kappa^N$$ is the specialization morphism. 
Let $X$ be an open subset of $\mathbb P_\kappa^N$ such that its complement $T$ is a divisor. Let $F\text{-isoc}^\dagger(X)$ be the category of 
$F$-isocrystals on $X$ overconvergent along $T$ as defined in  \cite[D\'efinition 2.3.6]{B3}, and let 
$F\text{-Coh} (\mathcal D^\dagger_{\mathscr P^N, \mathbb Q}(^\dagger T))$ be the category of coherent 
$\mathcal D^\dagger_{\mathscr P^N, \mathbb Q}(^\dagger T)$-modules with Frobenius structures. 
We have a fully faithful functor induced by the specialization morphism
$$\text{sp}_\ast: F\text{-isoc}^\dagger(X) \to F\text{-Coh} (\mathcal D^\dagger_{\mathscr P^N, \mathbb Q}(^\dagger T)).$$
By \cite[Th\'eor\`eme 2.2.12]{Caro} 
the essential image consists of coherent $\mathcal D^\dagger_{\mathscr P^N, \mathbb Q}(^\dagger T)$-modules with Frobenius structures 
whose restriction to $\mathscr P^N \backslash T$ is
$(\mathcal O_{\mathscr P^N ,\mathbb Q})|_{\mathscr P^N \backslash T}$-coherent, where  
$\mathcal O_{\mathscr P^N,\mathbb Q}=\mathcal O_{\mathscr P^N}\otimes_{\mathbb Z}\mathbb Q$. We also call an object in 
$F\text{-Coh} (\mathcal D^\dagger_{\mathscr P^N, \mathbb Q}(^\dagger T))$ which lies in the essential image of 
$\mathrm{sp}_*$ a convergent $F$-isocrystal on $X$ overconvergent 
along $T$, or simply an overconvergent $F$-isocrystal on $X$.

\begin{theorem}\label{convergent} ${}$

(i) $H^k(C^\cdot(\mathcal L^{\dagger}))|_{U_0}=0$ for $k\ne n$.

(ii) The cohomology $H^n(C^\cdot(\mathcal L^\dagger))$ defines an overconvergent $F$-isocrystal. More precisely, for any divisor $T$ containing 
$\mathbb P_k^N\backslash U_0$, $(^\dagger T)(H^n(C^\cdot(\mathcal L^{\dagger})))
:=\mathcal D^\dagger_{\mathscr P^N, \mathbb Q}(^\dagger T)\otimes_{\mathcal D^\dagger
_{\mathscr P^N, \mathbb Q}(\infty)} H^n(C^\cdot(\mathcal L^{\dagger})))$ is a convergent $F$-isocrystal on 
$\mathbb P_\kappa^N\backslash T$ overconvergent along $T$. 
\end{theorem}

\begin{definition} We define the \emph{GKZ hypergeometric F-isocrystal} to be $(^\dagger T)(H^n(C^\cdot (\mathcal L^\dagger)))$
with $T=\mathbb P_\kappa^N-U_0$, 
and denote it by ${\mathrm{Hyp}}_{U_0}$. 
\end{definition}

\subsection{Fibers of the GKZ hypergeometric complex}\label{aaa}

In number theory, we often need to study the twisted exponential sum (\ref{finitegkz}) for a rational point $\mathbf x=(x_1, \dots, x_N)$
in $\mathbb A^N_{\kappa}$ with coordinates $x_i$ lying in a finite extension of $\kappa$, and we want to study how the exponential sum
and its $L$-function vary if the rational point $x$ varies in $\mathbb A^N_{\kappa}$. Such kind of information can be read from the fibers of the 
GKZ hypergeometric complex. 
Let $\mathbf a=(a_1, \ldots, a_N)$ be a point in the closed unit polydisc $D(0,1^+)^N$, where $a_i\in K'$ for some finite extension $K'$ of $K$, and 
$a_i$ lie in the discrete valuation ring $R'$ of $K'$. 
This point defines a closed immersion 
$i_{\mathbf a}: \mathrm{Spf}\, R' \to \mathscr P^N$ of formal schemes.
Let
\begin{eqnarray*}
L^\dagger_0&=&\bigcup_{s>1} \{\sum_{\mathbf w\in\mathbb Z^n\cap\delta } a_{\mathbf w} t^{\mathbf w}:\;  a_{\mathbf w} \in K', \; 
\vert a_{\mathbf w}\vert s^{\vert \mathbf w\vert} \hbox { are bounded} \}.
\end{eqnarray*}
Consider the  logarithmic twisted de Rham complex $C^\cdot(  L_0^\dagger)$ such that 
$$C^k(  L^\dagger_0)=
\{\sum_{1\leq i_1<\cdots < i_k\leq n}f_{i_1\ldots i_k}(\mathbf t)\frac{\mathrm dt_{i_1}}{t_{i_1}}\wedge \cdots\wedge \frac{\mathrm dt_{i_k}}{t_{i_k}}:
\;f_{i_1\ldots i_k}(\mathbf t)\in   L^\dagger_0\}$$ and the differential $d: C^k(  L_0^\dagger)\to 
C^{k+1}(  L_0^\dagger)$ is given by
\begin{eqnarray*}
d(\omega)&=&\big(\mathbf t^\gamma \exp(\pi F(\mathbf a,\mathbf t))\big)^{-1} \circ \mathrm d_{\mathbf t} \circ 
\big( \mathbf t^\gamma\exp(\pi F(\mathbf a,\mathbf t))\big)(\omega)
\\&=&\mathrm d_{\mathbf t}\omega + \sum_{i=1}^n\Big(\gamma_i+\pi\sum_{j=1}^N w_{ij}a_j\mathbf t^{\mathbf w_j}\Big)
\frac{\mathrm dt_i}{t_i}\wedge \omega
\end{eqnarray*}
for any $\omega\in C^k(  L_0^\dagger)$. Even though the notation $C^\cdot(L_0^\dagger)$ does not involve $\mathbf a$, the complex 
$C^\cdot(L_0^\dagger)$ depends on $\mathbf a$ since the differential $d$ does. 

\begin{proposition}\label{newfiber} ${}$ 

(i) 
For any $\mathbf a=(a_1, \ldots, a_N)\in D(0,1^+)^N$,  $H^k(C^{\cdot}(L_0^\dagger))$ are finite dimensional. If $\mathbf a=(a_1, \ldots, a_N)$ lies in $U$, then
$$\mathrm{dim}\, H^k(C^\cdot(L^\dagger_0))\cong \left\{\begin{array}{cc}
0&\hbox{if } k\not =n ,\\
n!\mathrm{vol}(\Delta) &\hbox{if }k=n.
\end{array}
\right.$$ 

(ii) For any $\mathbf a$ in $D(0, 1^+)^N$, we have isomorphisms 
$$i^!_{\mathbf a}C^\cdot(\mathcal L^\dagger)[N] \cong C^\cdot(  L^\dagger)\otimes^L_{K\langle \mathbf x\rangle^\dagger}
K'\cong C^\cdot (  L^\dagger_0),$$ where $i_{\mathbf a}^!$ is the extraordinary pullback operator in the derived category 
of arithmetic $\mathcal D$-modules,
$K'$ is regarded as a $K\langle \mathbf x\rangle^\dagger$-algebra
via the homomorphism $$K\langle \mathbf x\rangle^\dagger\to K',\quad x_j\mapsto a_j.$$
\end{proposition}

\subsection{} In this subsection, we just assume that $\delta$ is a rational convex polyhedral cone containing $\mathbf w_1, \ldots, \mathbf w_N$
such that $(1-q)\gamma\in\mathbb Z^n\cap \delta$. Define $\Psi_{\mathbf a}:  L^\dagger_0\to   L^\dagger_0$ by $$\Psi_{\mathbf a} ( \sum_{\mathbf w\in\mathbb Z^n\cap\delta}
c_{\mathbf w}\mathbf t^{\mathbf w})= \sum_{\mathbf w\in\mathbb Z^n}
c_{q\mathbf w}\mathbf t^{\mathbf w}.$$ 
On differential forms, define
$$\Psi_{\mathbf a}\Big(\sum_{1\leq i_1<\cdots < i_k\leq n}f_{i_1\ldots i_k}(\mathbf t)\frac{\mathrm dt_{i_1}}{t_{i_1}}\wedge \cdots\wedge \frac{\mathrm dt_{i_k}}
{t_{i_k}}\Big)
=\sum_{1\leq i_1<\cdots < i_k\leq n}q^{-k} \Psi_{\mathbf a}(f_{i_1\ldots i_j}(\mathbf t))\frac{\mathrm dt_{i_1}}{t_{i_1}}\wedge \cdots\wedge 
\frac{\mathrm dt_{i_k}}{t_{i_k}}.$$
Define
\begin{eqnarray*}
G_{\mathbf a}&=&\big(\mathbf t^\gamma \exp(\pi F(\mathbf a^q,\mathbf t))\big)^{-1} \circ \Psi_{\mathbf a} \circ 
\big(\mathbf t^\gamma\exp(\pi F(\mathbf a,\mathbf t))\big)\\
&= & \Psi_{\mathbf a} \circ \big(\mathbf t^{(1-q)\gamma}\exp\big(\pi F(\mathbf a,\mathbf t)- 
\pi F(\mathbf a^q, \mathbf t^{q})\big)\big).
\end{eqnarray*}
By Proposition \ref{estimation} (i), $\exp\big(\pi F(\mathbf a,\mathbf t)- 
\pi F(\mathbf a^q, \mathbf t^{q})\big)$ lies in $L_0^\dagger$ and hence $G_{\mathbf a}$ defines an operator on $C^\cdot( L^\dagger_0)$.

From now on, we assume that $\mathbf a$ is a Techm\"uller point, that is, $a_j^q=a_j$ $(j=1, \ldots, N)$. Then $\mathbf a$ is a fixed point of 
$$\mathrm{Fr}:D(0,1^+)^N\to D(0,1^+)^N,\quad (x_1, \ldots, x_N)\mapsto (x_1^q,\ldots, x_N^q).$$ 
In this case $G_{\mathbf a}$ commutes with $d: C^k(  L^\dagger_0)
\to C^{k+1}(  L^\dagger_0)$ and hence is a chain map. 
We will show that each $G_{\mathbf a}:C^k( L^\dagger_0)\to C^k( L^\dagger_0)$ is a nuclear operator and hence 
the homomorphism on each $H^k(C^\cdot( L^\dagger_0))$ induced by $G_{\mathbf a}$ is also nuclear. We can talk about 
their traces and characteristic power series. Let 
\begin{eqnarray*}
\mathrm{Tr}\big(G_{\mathbf a}, C^\cdot( L^\dagger_0)\big)&=&\sum_{k=0}^n (-1)^k 
\mathrm{Tr}\big( G_{\mathbf a}, C^k( L^\dagger_0)\big)\\
&=&\sum_{k=0}^n (-1)^k \mathrm{Tr}\big( G_{\mathbf a} ,H^k( C^\cdot(L^\dagger_0))\big),\\
\mathrm{det}\big(I-T G_{\mathbf a}, C^\cdot( L^\dagger_0)\big)&=&\prod_{k=0}^n 
\mathrm{det}\big(I-TG_{\mathbf a}, C^k( L^\dagger_0)\big)^{(-1)^k}\\
&=& \prod_{k=0}^n 
\mathrm{det}\big(I-T G_{\mathbf a}, H^k(C^\cdot( L^\dagger_0))\big)^{(-1)^k}.
\end{eqnarray*}

Let $\chi:\mathbb F_q^*\to \overline{\mathbb Q}_p^*$ be the Techm\"uller character which maps each $u$ in $\mathbb F_q^*$ to its Techm\"uller 
lifting. By \cite[Theorems 4.1 and 4.3]{M1}, the formal power series $\theta(z)=\exp(\pi z-\pi z^p)$ converges in a disc of radius $>1$, and its value $\theta(1)$ at $z=1$ 
is a primitive $p$-th root of unity in $K$. Let $\psi:\mathbb F_q\to K^*$ be the additive character defined by 
$$\psi(\bar a)=\theta(1)^{\mathrm{Tr}_{\mathbb F_q/\mathbb F_p}(\bar a)}$$ for any $\bar a\in \mathbb F_q$. 
Let $\bar a_j\in\mathbb F_q$ be the residue class of
$a_j$, let
\begin{eqnarray*}
&&S_m(F(\bar{\mathbf a},\mathbf t))\\
&=&\sum_{\bar u_1,\ldots, \bar u_n\in \mathbb F_{q^m}^*}\chi_1 (\mathrm{Norm}_{\mathbb F_q^m/\mathbb F_q}(\bar u_1))
\cdots\chi_n(\mathrm{Norm}_{\mathbb F_q^m/\mathbb F_q}(\bar u_n)) \psi\Big 
(\mathrm{Tr}_{\mathbb F_{q^m}/\mathbb F_q}\Big(\sum_{j=1}^N \bar a_j \bar u_1^{w_{1j}}\cdots \bar u_n^{w_{nj}}\Big)\Big)
\end{eqnarray*}
be the twisted exponential sum for the multiplicative characters $\chi_i=\chi^{(1-q)\gamma_i}$, the nontrivial additive 
character $\psi:\mathbb F_q\to \overline{\mathbb Q}_p^*$, and the polynomial $F(\bar{\mathbf a},\mathbf t)$, and let
$$L(F(\mathbf {\bar a}, \mathbf t),T)=\exp\Big(\sum_{m=1}^\infty S_m(F(\bar{\mathbf a},\mathbf t))\frac{T^m}{m}\Big)$$ be the $L$-function
for the twisted exponential sums. Note that for $m=1$, we have 
$$S_m(F(\bar{\mathbf a}, \mathbf t))=\mathrm{Hyp}(\bar a_1, \ldots, \bar a_n),$$ where $\mathrm{Hyp}(x_1, \ldots, x_N)$ is given by 
(\ref{finitegkz}).

\begin{theorem} \label{arithmetic}
Suppose $\delta$ is a rational convex polyhedral cone containing $\mathbf w_1, \ldots, \mathbf w_N$ such that 
$(1-q)\gamma\in \mathbb Z^n\cap \delta$, and suppose 
$K'$ contains all $(q-1)$-th root of unity. Let $\mathbf a=(a_1,\ldots, a_n)$ be a Techm\"uller point, that is, $a_j^q=a_j$. Then each 
$G_{\mathbf a}: C^k( L^\dagger_0)\to C^k( L^\dagger_0)$ is nuclear. Moreover, we have 
\begin{eqnarray*}
S_m(F(\bar{\mathbf a},\mathbf t))&=&\mathrm{Tr}\big( (q^nG_{\mathbf a})^m, C^\cdot( L^\dagger_0)\big),\\
L(F(\bar{\mathbf a},\mathbf t),T)&=&\mathrm{det}\big(I-T q^nG_{\mathbf a}, C^\cdot( L^\dagger_0)\big)^{-1},
\end{eqnarray*}
\end{theorem}

The paper is organized as follows. In section 1, we prove Propositions \ref{estimation} and \ref{FG} on basic properties of the GKZ hypergeometric complex. In Section 2, 
we prove Proposition \ref{coherent} about the $D^\dagger$-module structure on $C^\cdot(L^\dagger)$. 
In Section 3, we prove the Dwork trace formula and Theorem \ref{arithmetic}
relating the twisted exponential sum with the trace of the Frobenius  on $C^\cdot(L^\dagger_0)$. Section 4 is the most technical part of the paper. 
By relating $C^\cdot(L^\dagger_0)$ with the rigid cohomology of a crystal, we prove Proposition \ref{newfiber} saying that the cohomology groups of 
$C^\cdot(L^\dagger_0)$ are finite dimensional.  
Combining this finiteness result with a theorem of Caro, we deduce Proposition \ref{overholonomic} which says $C^\cdot(L^\dagger)$ is 
over-holonomic. Finally we use Berthelot's theory of arithmetic $\mathcal D$-modules and results of Caro and Ogus 
to prove Theorems \ref{convergent} which claims that $H^n(C^\cdot(L^\dagger))$ defines an overconvergent $F$-isocrystal on the non-degenerate locus $U$. This proof is
inspired
by \cite{Miy} which treats the one-variable hypergeometric systems. In Section 5, we state some open questions. 

\section{Proof of some basic propositions}  

\subsection{Proof of Proposition \ref{estimation}} 
(i) Write $\exp(\pi z-\pi z^q)=1+ \sum_{i=1}^\infty c_i z^i$. We have $\vert c_i \vert\leq p^{-\frac{p-1}{pq}i}$ by \cite[Theorem 4.1]{M1}.
Write 
\begin{eqnarray*}
\exp(\pi z^q-\pi z)&=&1-(\sum_{i=1}^\infty c_i z^i)+(\sum_{i=1}^\infty c_i z^i)^2-\cdots\\
&=& \sum_{i=0}^\infty c'_i z^i.
\end{eqnarray*}
Then we also have $\vert c'_i \vert\leq p^{-\frac{p-1}{pq}i}$.
For the monomial $x_j\mathbf t^{\mathbf w_j}$, we have 
\begin{eqnarray*}
&&\exp\big(\pi (x_j\mathbf t^{\mathbf w_j})^q-\pi x_j\mathbf t^{\mathbf w_j}\big)=\sum_{i=0}^\infty c'_i x_j^i\mathbf t^{i\mathbf w_j},\\
&&\Vert c'_i x_j^i\Vert_r \leq p^{-\frac{p-1}{pq}i} r^i 
= \Big(r^{-1} p^{\frac{p-1}{pq}}\Big)^{-i} \leq  \Big(r^{-\frac{1}{w}} p^{\frac{p-1}{pqw}}\Big)^{-\vert i\mathbf w_j\vert}.
\end{eqnarray*}
Here for the last inequality, we use the fact that $\frac{\vert i\mathbf w_j\vert}{w} \leq i $ and the assumption that $r\leq p^{\frac{p-1}{pq}}$.
So we have $\exp\big(\pi (x_j\mathbf t^{\mathbf w_j})^q-\pi x_j\mathbf t^{\mathbf w_j}\big)\in L(r,r^{-\frac{1}{w}} p^{\frac{p-1}{pqw}})$. 
We have $$\exp\big(\pi F(\mathbf x^q,\mathbf t^q)- \pi F(\mathbf x,\mathbf t)\big)=\prod_{j=1}^N 
\exp\big(\pi (x_j\mathbf t^{\mathbf w_j})^q-\pi x_j\mathbf t^{\mathbf w_j}\big).$$ 
Since $r^{-\frac{1}{w}} p^{\frac{p-1}{pqw}} \geq 1,$ the space
$L(r,r^{-\frac{1}{w}} p^{\frac{p-1}{pqw}})$ is a ring. So 
$\exp\big (\pi F(\mathbf x^q,\mathbf t^q)-
\pi F(\mathbf x, \mathbf t)\big)$ lies  in $L(r,r^{-\frac{1}{w}} p^{\frac{p-1}{pqw}})$.
Similarly
$\exp\big(\pi F(\mathbf x,\mathbf t)- \pi F(\mathbf x^q,\mathbf t^{q})\big)$ 
lies in $L(r,r^{-\frac{1}{w}} p^{\frac{p-1}{pqw}})$. 

(ii)  Let $C^{(1)\cdot}( L^\dagger)$ be the logarithmic twisted de Rham complex 
so that $C^{(1) ,k}( L^\dagger)=C^k( L^\dagger)$ for each $k$, and 
$d^{(1)}:C^{(1),k}( L^\dagger)\to C^{(1),k+1}( L^\dagger)$ is given by 
\begin{eqnarray*}
d^{(1)}&=&\big ( \mathbf t^\gamma\exp(\pi F(\mathbf x^q,\mathbf t))\big)^{-1} \circ \mathrm d_{\mathbf t} \circ \big(\mathbf t^\gamma
\exp(\pi F(\mathbf x^q,\mathbf t)\big)\big)\\
&=&\mathrm d_{\mathbf t}+ \sum_{i=1}^n \Big(\gamma_i+\sum_{j=1}^N \pi w_{ij}x_j^q\mathbf t^{\mathbf w_j}\Big)
\frac{\mathrm dt_i}{t_i}.
\end{eqnarray*}
Let $\nabla^{(1)}$ be the connection on $C^{(1)\cdot}( L^\dagger)$ defined by 
\begin{eqnarray*}
\nabla^{(1)}(\omega)&=&\big(\mathbf t^\gamma \exp (\pi F(\mathbf x^{q}, \mathbf t))\big)^{-1}
\circ \mathrm d_{\mathbf x} \circ \big(\mathbf t^\gamma\exp (\pi F(\mathbf x^{q}, \mathbf t))\big)(\omega)\\
&=&
\mathrm d_{\mathbf x}\omega+\sum_{j=1}^Nq\pi x_j^{q-1}\mathbf t^{\mathbf w_j} \mathrm dx_j \wedge \omega,
\end{eqnarray*}
We first prove $$\mathrm{Fr}^*(C^\cdot( L^{\dagger}),\nabla)\cong (C^{(1)\cdot}( L^\dagger), \nabla^{(1)}).$$
Consider the $K$-algebra homomorphism 
$$K\langle y_1, \ldots, y_N\rangle^\dagger \to K\langle x_1, \ldots, x_N\rangle^\dagger,\quad y_j\mapsto x_j^{q}.$$  
This makes $K\langle\mathbf x\rangle^\dagger$ a finite $K\langle\mathbf y\rangle^\dagger$-algebra. 
We have a canonical isomorphism 
\begin{eqnarray*}
 \tilde L^\dagger  \otimes_{K\langle\mathbf y\rangle^\dagger}K\langle\mathbf x\rangle^\dagger
\stackrel \cong\to   L^{\dagger},
\end{eqnarray*}
where $\tilde L^{\dagger}$ is defined in the same way as $L^\dagger$ except that we change the variables from $x_j$ to $y_j$. 
The connection $\nabla$ on $ \tilde L^\dagger$ defines a connection on  
 $ \tilde L^\dagger\otimes_{K\langle\mathbf y\rangle^\dagger}K\langle\mathbf x\rangle^\dagger$ 
 by the Leibniz rule. Via the above isomorphism, it defines the connection $\mathrm{Fr}^*\nabla$ on  
 $L^{\dagger}$. Let's verify that 
 it coincides with the connection $\nabla^{(1)}$ on $ L^\dagger$. Any element in 
 $ L^{\dagger}$ can be written as a finite sum
 of elements of the form
 $f(\mathbf x)g(\mathbf x^q, \mathbf t)$ with $f(\mathbf x)\in K [\mathbf x]$ and $g(\mathbf y,\mathbf t)\in\tilde L^\dagger$. By 
 the Leibniz rule, we have 
 \begin{eqnarray*}
 (\mathrm{Fr}^*\nabla)(f(\mathbf x)g(\mathbf x^q,\mathbf t))
 &=&\mathrm d_{\mathbf x}( f(\mathbf x)) g(\mathbf x^q,\mathbf t)+ f(\mathbf x)(\nabla g(\mathbf y,\mathbf t))|_{\mathbf y=\mathbf x^q}\\
 &=&\mathrm d_{\mathbf x} (f(\mathbf x)) g(\mathbf x^q,\mathbf t)+ f(\mathbf x)\Big (\mathrm d_{\mathbf y}g(\mathbf y,\mathbf t)
 +\pi g(\mathbf y,\mathbf t) d_{\mathbf y} F(\mathbf y, \mathbf t)\Big)|_{\mathbf y=\mathbf x^q}\\
 &=& \mathrm d_{\mathbf x}\Big( f(\mathbf x) g(\mathbf x^q,\mathbf t)\Big) +\pi f(\mathbf x)g(\mathbf x^q,\mathbf t) d_{\mathbf x} F(\mathbf x^q, \mathbf t).
 \end{eqnarray*}
 This proves our assertion. Similarly, one verifies that the connection $\mathrm{Fr}^*\nabla$ on  
 $\mathrm{Fr}^*C^k( \tilde L^{\dagger})$ can be identified with the connection $\nabla^{(1)}$ on 
 $C^{(1),k}( L^{\dagger})$.  
Using the fact that $\Phi \circ \mathrm d_{\mathbf t}=\mathrm d_{\mathbf t} \circ \Phi$ and 
$\Phi \circ 
\mathrm d_{\mathbf x}=\mathrm d_{\mathbf x}\circ \Phi$, one checks that 
$F \circ d^{(1)}=d\circ F$ and $F\circ\nabla^{(1)}=\nabla\circ F.$ 
So $F$ defines a horizontal morphism of complexes of $K\langle \mathbf x\rangle^\dagger$-modules with connections
$$F: (C^{(1)\cdot}( L^\dagger), \nabla^{(1)})\to (C^\cdot( L^\dagger), \nabla).$$
 
\subsection{Proof of Proposition \ref{FG}}

That $G:(C^\cdot(L^\dagger), \nabla)\to (C^{(1)\cdot}(L^\dagger), \nabla^{(1)})$ is a horizontal morphism 
can be proved in the same way as the proof of Proposition \ref{estimation} (ii). To prove the 
assertions in \ref{FG} (ii),
we first work with the usual logarithmic de Rham complex and later with the twisted logarithmic de Rham complex.
We have
$$\Psi\circ\Phi=\mathrm{id}$$ on $C^\cdot(L^\dagger)$. 
By enlarging $K$, we may assume $K$ contains all $q$-th roots of unity. 
Let $\mu_q$ be the group of $q$-th roots of unity in $K$. For any $\zeta=(\zeta_1, \ldots, \zeta_n)\in \mu_q^n$, we use the notation
$$\zeta\mathbf t=(\zeta_1t_1, \ldots, \zeta_nt_n),\quad  \zeta^{\mathbf w}=\zeta_1^{w_1}\cdots\zeta_n^{w_n}.$$  
We have 
$$\sum_{\zeta\in \mu_q^n} \zeta^{\mathbf w}=\left\{\begin{array}{cl}
q^n &\hbox{if } q|\mathbf w,\\
0&\hbox{otherwise}. 
\end{array}
\right.$$
So we have
\begin{eqnarray*}
\Phi\circ \Psi(\sum_{\mathbf w}a_{\mathbf w}(\mathbf x)\mathbf t^{\mathbf w})=\sum_{\mathbf w}
a_{q\mathbf w}(\mathbf x)\mathbf t^{q\mathbf w}=\frac{1}{q^n} \sum_{\zeta\in \mu_q^n} \sum_{\mathbf w}
a_{\mathbf w}(\mathbf x)(\zeta \mathbf t)^{\mathbf w}.
\end{eqnarray*}
Let $\Theta_\zeta$ be the endomorphism on differential forms induced by the substitution $\mathbf t\mapsto \zeta\mathbf t$: 
$$\Theta_\zeta\Big(\sum_{1\leq i_1<\cdots < i_k\leq n}f_{i_1\ldots i_k}(\mathbf x,\mathbf t)\frac{\mathrm dt_{i_1}}{t_{i_1}}\wedge \cdots\wedge \frac{\mathrm dt_{i_k}}
{t_{i_k}}\Big)=
\sum_{1\leq i_1<\cdots < i_k\leq n}f_{i_1\ldots i_k}(\mathbf x, \zeta\mathbf t)\frac{\mathrm dt_{i_1}}{t_{i_1}}\wedge \cdots\wedge \frac{\mathrm dt_{i_k}}
{t_{i_k}}.$$ It commutes with $\mathrm d_{\mathbf t}$. We have 
$$\Phi\circ \Psi=\frac{1}{q^n}\sum_{\zeta\in \mu_q^n}  \Theta_\zeta.$$ 
Let's show that $\Phi \circ \Psi$ is homotopic to $\mathrm{id}$. It suffices to that $\Theta_\zeta$ is homotopic to 
$\mathrm{id}$ for each $\zeta\in \mu_q^n$. 
Let $$L_T^\dagger
=\bigcup_{r>1,\;s>1}\{\sum_{k\in \mathbb Z_{\geq 0},\; 
\mathbf w\in\mathbb Z^n\cap\delta} a_{(k,\mathbf w)}(\mathbf x) T^k \mathbf t^{\mathbf w}:\;  a_{(k,\mathbf w)}(\mathbf x) \in K
\{r^{-1}\mathbf x\},  
\; \Vert a_{(k,\mathbf w)}(\mathbf x)\Vert_r s^{k+ \vert \mathbf w\vert} \hbox { are bounded}\}.$$
Consider the de Rham complex
$(C^\cdot(L_T^\dagger), \mathrm d_{(T,\mathbf t)})$ so that $C^k(L_T^\dagger)$ is the space of $k$-forms 
which can be written as sums of products of $\mathrm dT, \frac{\mathrm dt_1}{t_1}, \ldots, \frac{\mathrm dt_n}{t_n}$ and functions in $L_T^\dagger$, and
$\mathrm d_{(T,\mathbf t)}:C^k(L_T^\dagger)\to C^{k+1}(L_T^\dagger)$ is the usual exterior diffferentiation of differential forms in the variables $T,\mathbf t$. 
The substitution 
$$t_i\to (1+(\zeta_i-1)T) t_i\quad (i=1, \ldots, n)$$
induces a chain map $$\iota: (C^\cdot (L^\dagger),\mathrm d_t) \to (C^\cdot(L_T^\dagger),\mathrm d_{(T,\mathbf t)}).$$ 
Here we use the fact that $\zeta_i\equiv 1\mod p$ so that  each $1+(\zeta_i-1)T$  is a unit in $L_T^\dagger$. 
In particular, $\frac{\mathrm d\big((1+(\zeta_i-1)T)t_i\big)}{(1+(\zeta_i-1)T)t_i}$ lies in $C^\cdot(L_T^\dagger)$. 
The evaluation at $T=0$ (resp. $T=1$)
induces a chain map 
$$\mathrm{ev}_0: (C^\cdot(L_T^\dagger), \mathrm d) \to (C^\cdot(L^\dagger),\mathrm d_t) \quad 
(\hbox{resp. }\mathrm{ev}_1: (C^\cdot(L_T^\dagger), \mathrm d) \to (C^\cdot(L^\dagger),\mathrm d_t)).
$$ We have 
$$\mathrm{ev}_1\circ \iota=\Theta_\zeta,\quad \mathrm{ev}_0\circ \iota=\mathrm{id}.$$ 
To prove  $\Theta_\zeta$ is homotopic to identity, it suffices to show $\mathrm{ev}_1$ is homotopic to $\mathrm{ev}_0$. 
Note that $\int_0^1 g(\mathbf x,T,\mathbf t)\mathrm dT$ lies in $L^\dagger$ for any $g(\mathbf x, T,\mathbf t)\in L^\dagger_T$. 
Define $\Xi: C^k(L^\dagger_T)\to C^{k-1}(L^\dagger)$ by
\begin{eqnarray*}
&& \Xi \Big(f(\mathbf x, T,\mathbf t)  \frac{\mathrm dt_{i_1}}{t_{i_1}}\wedge \cdots\wedge \frac{\mathrm dt_{i_k}}{t_{i_k}}\Big)=0,\\
&& \Xi \Big(g(\mathbf x,T,\mathbf t) \mathrm dT\wedge \frac{\mathrm dt_{j_1}}{t_{j_1}}\wedge \cdots\wedge \frac{\mathrm dt_{j_{k-1}}}{t_{j_{k-1}}}\Big)
=\Big(\int_0^1 g(\mathbf x,T,\mathbf t) \mathrm dT\Big) \frac{\mathrm dt_{j_1}}{t_{j_1}}\wedge \cdots\wedge \frac{\mathrm dt_{j_{k-1}}}{t_{j_{k-1}}}.
\end{eqnarray*}
Then we have $$\mathrm d_{\mathbf t}\Xi +\Xi \mathrm d_{(T, \mathbf t)}=\mathrm{ev}_1-\mathrm{ev}_0.$$

We now consider the logarithmic twisted de Rham complexes. Let $$T_{\zeta}, L, E_0, E_1, H$$ be the conjugates of 
$$\Theta_\zeta, \iota, \mathrm{ev}_0, \mathrm{ev}_1, \Xi$$ by $\mathbf t^\gamma \exp(\pi F(\mathbf x,\mathbf t))$,
respectively. One verifies that they are defined on 
$C^\cdot(L^\dagger)$ or $C^\cdot(L^\dagger_T)$. Actually we have 
\begin{eqnarray*}
&&E_0=\mathrm{ev}_0, \quad E_1=\mathrm{ev}_1,\quad H=\Xi,\\
&& T_\zeta=\zeta^\gamma \exp\big(\pi F(\mathbf x, \zeta\mathbf t)-\pi F(\mathbf x,\mathbf t)\big) \Theta_\zeta,\\
&&L=\big(1+(\zeta-1)T\big)^\gamma\exp \Big(\pi F\big(\mathbf x, (1+(\zeta-1)T)\mathbf t\big)-
\pi F(\mathbf x,\mathbf t)\Big)\iota,
\end{eqnarray*}
where for any $q$-th root of unity $\zeta_i$ and any $\gamma_i$ with $(1-q)\gamma_i\in\mathbb Z$, we define 
$\zeta_i^{\gamma_i}$ to be the unique $q$-th root of unity such that 
$$(\zeta_i^{\gamma_i})^{1-q}=\zeta_i^{(1-q)\gamma_i}.$$
This makes sense since taking $(1-q)$-th power defines an isomorphism on the group of $q$-th roots of unity. But taking $(1-q)$-th 
power on the group of $q$-th roots of unity is the identity map. So we actually have $\zeta_i^{\gamma_i}=\zeta_i^{(1-q)\gamma_i}$.  
Define $\big(1+(\zeta_i-1)T\big)^{\gamma_i}$ by 
$$\big(1+(\zeta_i-1)T\big)^{\gamma_i}=1+\gamma_i  (\zeta_i-1)T+ \frac{\gamma_i(\gamma_i-1)}{2!}\big((\zeta_i-1)T\big)^2+\cdots.$$
Then we have
$$
\big(1+(\zeta_i-1)T\big)^{\gamma_i}|_{T=1}=\zeta_i^{\gamma_i}.$$
One can verifty that
\begin{eqnarray*}
&&GF=\mathrm{id},\quad FG=\frac{1}{q^n}\sum_{\zeta\in \mu_q^n} T_\zeta,\\
&& E_1\circ L=T_\zeta,\quad E_0\circ L=\mathrm{id},\quad dH+Hd=E_1-E_1.
\end{eqnarray*}
It follows that each $T_\zeta$ is homotopic to identity and hence $FG$ is also homotopic to identity.

\section{$D^\dagger$-modules}

Recall that 
$$D^\dagger=\Gamma(\mathbb P^N, \mathcal D^\dagger_{\mathscr P^N, \mathbb Q}(\infty))=\bigcup_{r>1,\;s>1}
\{\sum_{\mathbf v\in\mathbb Z_{\geq 0}^N} f_{\mathbf v}(\mathbf x)\frac{\partial^{\mathbf v}}{\mathbf v_!}:\;
f_{\mathbf v}(\mathbf x)\in K\{ r^{-1}\mathbf x\},\; 
\Vert f_{\mathbf v}(\mathbf x)\Vert_r s^{\vert \mathbf v\vert} \hbox { are bounded}\}.$$ 
In this section, we mainly use the following description for $D^\dagger$. 

\begin{proposition}\label{two_defn_D}
We have
$$D^\dagger=\bigcup_{r>1,\;s>1}
\{\sum_{\mathbf v\in\mathbb Z_{\geq 0}^N} f_{\mathbf v}(\mathbf x)\frac{\partial^{\mathbf v}}{\pi^{\vert \mathbf v\vert}}:\;
f_{\mathbf v}(\mathbf x)\in K\{ r^{-1}\mathbf x\},\; 
\Vert f_{\mathbf v}(\mathbf x)\Vert_r s^{\vert \mathbf v\vert} \hbox { are bounded}\}.$$ 
\end{proposition}

\begin{proof} Let $m$ be a positive integer and let $$m=a_0+a_1p+a_2p^2+\cdots+ a_lp^l$$ be its $p$-expansion, where $0\leq a_i\leq p-1$ for all $i$
and $a_l\not=0$.  
Set $\sigma(m)=a_0+a_1+a_2+\cdots+a_l.$ We have
$$\sigma(m)\leq (p-1)(l+1)\leq (p-1)(\log_p m+1).$$ 
It is a standard fact that  
$$\mathrm{ord}_p(m!)=\frac{m-\sigma(m)}{p-1}.$$
So we have 
$$\mathrm{ord}_p\Big(\frac{\pi^m}{m!}\Big)=\frac{\sigma(m)}{p-1},\quad 0\leq \mathrm{ord}_p\Big(\frac{\pi^m}{m!}\Big)
\leq \log_p m+1.$$
For any nonzero $\mathbf v=(v_1, \ldots, v_N)\in\mathbb Z_{\geq 0}^N$, let $v_{i_1},\ldots, v_{i_m}$ be those nonzero components. We have 
\begin{eqnarray*}
0\leq\mathrm{ord}_p\Big(\frac{\pi^{\vert \mathbf v\vert}}{\mathbf v!}\Big)&\leq& \sum_{j=1}^m (\log_p v_{i_j}+1)\\
&=& \log_p\Big(\prod_{j=1}^m v_{i_j}\Big) +m\\
&\leq&\log_p \left(\frac{ \sum_{j=1}^m v_{i_j}}{m}\right)^m+m\\
&\leq& N\log_p\vert \mathbf v\vert +N,
\end{eqnarray*}
that is, $$0\leq\mathrm{ord}_p\Big(\frac{\pi^{\vert \mathbf v\vert}}{\mathbf v!}\Big)\leq N\log_p\vert \mathbf v\vert +N.$$
Set 
$$B^\dagger=\bigcup_{r>1,\;s>1}
\{\sum_{\mathbf v\in\mathbb Z_{\geq 0}^N} f_{\mathbf v}(\mathbf x)\frac{\partial^{\mathbf v}}{\mathbf v_!}:\;
f_{\mathbf v}(\mathbf x)\in K\{ r^{-1}\mathbf x\},\; 
\Vert f_{\mathbf v}(\mathbf x)\Vert_r s^{\vert \mathbf v\vert} \hbox { are bounded}\}.$$ 
Let's prove $B^\dagger=D^\dagger$. Given 
$\sum_{\mathbf v\in\mathbb Z_{\geq 0}^N} f_{\mathbf v}(\mathbf x)\frac{\partial^{\mathbf v}}{\mathbf v_!}$ 
in $B^\dagger$, choose real numbers $r>1, s>1$ and $C>0$ such that  
$$\Vert f_{\mathbf v}(\mathbf x)\Vert_r s^{\vert \mathbf v\vert}\leq C.$$
We have $$\sum_{\mathbf v\in\mathbb Z_{\geq 0}^N} f_{\mathbf v}(\mathbf x)\frac{\partial^{\mathbf v}}{\mathbf v_!}
=\sum_{\mathbf v\in\mathbb Z_{\geq 0}^N} \Big(f_{\mathbf v}(\mathbf x)\frac{\pi^{\vert\mathbf v\vert}}{\mathbf v!}\Big)\frac{\partial^{\mathbf v}}{\pi^{\vert \mathbf v\vert}}.$$
For any nonzero $\mathbf v \in \mathbb Z_{\geq 0}^N$, we have 
$\mathrm{ord}_p \Big(\frac{\pi^{\vert \mathbf v\vert}}{\mathbf v!}\Big)\geq 0.$ So we have
$$\Big\Vert f_{\mathbf v}(\mathbf x)\frac{\pi^{\vert\mathbf v\vert}}{\mathbf v!}\Big\Vert_r s^{\vert \mathbf v\vert}\leq 
\Vert f_{\mathbf v}(\mathbf x)\Vert_r s^{\vert \mathbf v\vert}\leq C.$$
Hence $\sum_{\mathbf v\in\mathbb Z_{\geq 0}^N} f_{\mathbf v}(\mathbf x)\frac{\partial^{\mathbf v}}{\mathbf v_!}$ lies in 
$D^\dagger$. 

Conversely, given 
$\sum_{\mathbf v\in\mathbb Z_{\geq 0}^N} f_{\mathbf v}(\mathbf x)\frac{\partial^{\mathbf v}}{\pi^{\vert \mathbf v\vert}}$ 
in $D^\dagger$, choose real numbers $r>1, s>1$ and $C'>0$ such that  
$$\Vert f_{\mathbf v}(\mathbf x)\Vert_r s^{\vert \mathbf v\vert}\leq C'.$$
We have $$\sum_{\mathbf v\in\mathbb Z_{\geq 0}^N} f_{\mathbf v}(\mathbf x)\frac{\partial^{\mathbf v}}{\pi^{\vert \mathbf v\vert}}
=\sum_{\mathbf v\in\mathbb Z_{\geq 0}^N} \Big(f_{\mathbf v}(\mathbf x)\frac{\mathbf v!}{\pi^{\vert \mathbf v\vert}}\Big)\frac{\partial^{\mathbf v}}{\mathbf v_!}.$$
For any nonzero $\mathbf v=(v_1, \ldots, v_N)\in \mathbb Z_{\geq 0}^N$, we have 
$\mathrm{ord}_p \Big(\frac{\pi^{\vert \mathbf v\vert}}{\mathbf v!}\Big)\leq N\log_p\vert\mathbf v\vert+N$. So we have
$$\Big \vert \frac{\mathbf v!}{\pi^{\vert\mathbf v\vert}}\Big\vert \leq p^N \vert \mathbf v \vert^N$$
For any $1<s'<s$,  we have  
\begin{eqnarray*}
\Big\Vert f_{\mathbf v}(\mathbf x)\frac{\mathbf v!}{\pi^{\vert\mathbf v\vert}}\Big\Vert_r s'^{\vert \mathbf v\vert}&\leq& 
\Big(\Vert f_{\mathbf v}(\mathbf x)\Vert_r s^{\vert\mathbf v\vert} \Big)\Big(p^N \vert \mathbf v\vert^N\Big) (s'/s)^{\vert \mathbf v\vert}\\
&\leq& p^N C' \vert \mathbf v\vert^N(s'/s)^{\vert \mathbf v\vert}.
\end{eqnarray*}
Note that $\vert \mathbf v\vert^N(s'/s)^{\vert \mathbf v\vert}$ are bounded since $0<s'/s<1$. So 
 $\sum_{\mathbf v\in\mathbb Z_{\geq 0}^N} f_{\mathbf v}(\mathbf x)\frac{\partial^{\mathbf v}}{\mathbf v_!}$ lies in 
$B^\dagger$. 
\end{proof}

\begin{lemma}\label{comb} Let $S$ be any subset of $\mathbb Z^n_{\geq 0}$. There exists a finite subset $S_0$ of $S$ such that 
$S\subset \bigcup_{\mathbf v\in S_0}(\mathbf v+\mathbb Z_{\geq 0}^n)$. 
\end{lemma}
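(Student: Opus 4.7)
The plan is to prove this by induction on $n$, recognizing the statement as a form of Dickson's lemma (equivalently, the statement that the product order on $\mathbb Z_{\geq 0}^n$ is a well-partial-order, or that every monomial ideal in a polynomial ring over $n$ variables is finitely generated).

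For the base case $n=1$, if $S\subset\mathbb Z_{\geq 0}$ is empty the claim is vacuous, and otherwise I would simply take $S_0=\{\min S\}$, since every element of $\mathbb Z_{\geq 0}$ greater than or equal to $\min S$ lies in $\min S+\mathbb Z_{\geq 0}$.

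For the inductive step, assuming the result in dimension $n-1$, I would pick any $\mathbf v_0=(v_{0,1},\ldots,v_{0,n})\in S$ (again the empty case is trivial). The complement $\mathbb Z_{\geq 0}^n\setminus(\mathbf v_0+\mathbb Z_{\geq 0}^n)$ decomposes as the finite union
$$\bigcup_{i=1}^n\bigcup_{k=0}^{v_{0,i}-1}\{\mathbf w\in\mathbb Z_{\geq 0}^n:\; w_i=k\},$$
so any $\mathbf w\in S$ either lies in $\mathbf v_0+\mathbb Z_{\geq 0}^n$ or in one of the slabs $H_{i,k}=\{w_i=k\}$ for some $i$ and $0\leq k<v_{0,i}$. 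Each slab $H_{i,k}$ is identified with a copy of $\mathbb Z_{\geq 0}^{n-1}$ by dropping the $i$-th coordinate, and coordinate-wise dominance is preserved under this identification. Applying the induction hypothesis to the image of $S\cap H_{i,k}$, I obtain a finite subset $S_{i,k}^{(0)}\subset S\cap H_{i,k}$ such that every element of $S\cap H_{i,k}$ dominates some element of $S_{i,k}^{(0)}$ componentwise.

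I would then set $S_0=\{\mathbf v_0\}\cup\bigcup_{i,k}S_{i,k}^{(0)}$, which is a finite subset of $S$. Any $\mathbf w\in S$ is either in $\mathbf v_0+\mathbb Z_{\geq 0}^n$ or lies in some slab $H_{i,k}$ and hence is covered by an element of $S_{i,k}^{(0)}$. There is no real obstacle here; the only subtlety is to make sure the induction is applied to a subset of $\mathbb Z_{\geq 0}^{n-1}$ and that the chosen covering elements are pulled back to elements of $S\subset\mathbb Z_{\geq 0}^n$, but this is automatic from the identification of the slab with $\mathbb Z_{\geq 0}^{n-1}$.
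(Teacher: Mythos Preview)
Your proof is correct and follows essentially the same approach as the paper's: both argue by induction on $n$, fix a single element of $S$, cover the complement of its upper set by finitely many ``slabs'' $\{w_i = k\}$ for $0\le k< v_{0,i}$, and apply the induction hypothesis on each slab. The only cosmetic differences are that the paper runs the induction from $n$ to $n+1$ and allows the (redundant) upper index $k=a_i$, whereas you go from $n-1$ to $n$ and make the identification of each slab with $\mathbb Z_{\geq 0}^{n-1}$ explicit.
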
 
 
\begin{proof} We use induction on $n$. When $n=1$, we have $S\subset v+\mathbb Z_{\geq 0}$, where $v$ is the minimal element in $S\subset \mathbb 
Z_{\geq 0}$. Suppose the assertion holds for any subset of $\mathbb Z^n_{\geq 0}$, and let $S$ be a subset of $\mathbb Z^{n+1}_{\geq 0}$. If $S$ is empty, 
our assertion holds trivially. Otherwise, we fix an element 
$\mathbf a=(a_1, \ldots, a_{n+1})$ in $S$. For any $i\in\{1,\ldots, n+1\}$ and any $0\leq b_i\leq a_i$, let 
$$S_{i, b_i}=\{(c_1, \ldots, c_{n+1})\in S:\;  c_i=b_i\}.$$ By 
the induction hypothesis, there exists a finite subset $T_{i, b_i}\subset S_{i, b_i}$ such that 
$$S_{i,b_i}\subset \bigcup_{\mathbf v\in T_{i,b_i}} (\mathbf v+\mathbb Z_{\geq 0}^{n+1}).$$ We have 
\begin{eqnarray*}
S&\subset& \Big(\bigcup_{1\leq i\leq n+1} \bigcup_{0\leq b_i\leq a_i} S_{i,b_i}\Big)\bigcup (\mathbf a+\mathbb Z^{n+1}_{\geq 0})\\
&\subset&  \Big(\bigcup_{1\leq i\leq n+1} \bigcup_{0\leq b_i\leq a_i} \bigcup_{\mathbf v\in T_{i,b_i}} (\mathbf v+\mathbb Z_{\geq 0}^{n+1})\Big)
\bigcup (\mathbf a+\mathbb Z^{n+1}_{\geq 0}).
\end{eqnarray*}
We can take $S_0=\Big(\bigcup_{1\leq i\leq n+1} \bigcup_{0\leq b_i\leq a_i} T_{i, b_i}\Big)\bigcup \{\mathbf a\}.$
\end{proof}

Let 
\begin{eqnarray*}
&&C(A)=\{k_1\mathbf w_1+\cdots +k_N \mathbf w_N:\; k_i\in\mathbb Z_{\geq 0}\},\\
&&L^{\dagger\prime}=\bigcup_{r>1,\;s>1}
\{\sum_{\mathbf w\in C(A)} a_{\mathbf w}(\mathbf x)\mathbf t^{\mathbf w}:\; a_{\mathbf w}(\mathbf x)\in K\{r^{-1}\mathbf x\}, \; 
\Vert a_{\mathbf w}(\mathbf x)\Vert_r s^{\vert \mathbf w\vert} \hbox{ are bounded}\}.
\end{eqnarray*}
$C(A)$ is a submonoid of $\mathbb Z^n\cap \delta$, and $L^{\dagger\prime}$ is a subring of $L^\dagger$. 
 
\begin{lemma}\label{LL} ${}$

(i) The ring homomorphism $$\phi: K\langle \mathbf x, \mathbf y\rangle^\dagger\to L^{\dagger\prime}, \quad  
\sum_{\mathbf v\in \mathbb Z^N_{\geq 0}}f_{\mathbf v}(\mathbf x) 
\mathbf y^{\mathbf v}
\mapsto\sum_{\mathbf v\in \mathbb Z^N_{\geq 0}}f_{\mathbf v}(\mathbf x)  \mathbf t^{v_1\mathbf w_1+\cdots +v_N\mathbf w_N}$$ is surjective,
where $\mathbf y=(y_1, \ldots, y_N)$ and 
$$K\langle \mathbf x, \mathbf y\rangle^\dagger=\bigcup_{r>1,\;s>1}\{ \sum_{\mathbf v\in \mathbb Z^N_{\geq 0}} f_{\mathbf v}(\mathbf x){\mathbf y}^{\mathbf v}:\;
f_{\mathbf v}(\mathbf x)\in K\{r^{-1}x\} \hbox { and }
\Vert f_{\mathbf v}(x)\Vert_r s^{\vert \mathbf v\vert} \hbox { are bounded}\}.$$ 

(ii) Let $\frac{\partial}{\partial x_j}$ acts on $L^{\dagger\prime}$ via 
$$\nabla_{\frac{\partial}{\partial x_j}}=\big(\exp(\pi F(\mathbf x, \mathbf t))\big)^{-1}\circ \frac{\partial}{\partial x_j}\circ \big(\exp(\pi F(\mathbf x, \mathbf t))\big)
=\frac{\partial}{\partial x_j}+\pi \mathbf t^{\mathbf w_j}.$$
Then $L^{\dagger\prime}$ is a $D^\dagger$-module, and the map 
$$\varphi: D^\dagger\to L^{\dagger\prime}, \quad \sum_{\mathbf v\in \mathbb Z^N_{\geq 0}}f_{\mathbf v}(\mathbf x) 
\frac{\partial^{\mathbf v}}{\pi^{\vert\mathbf v\vert}}
\mapsto(\sum_{\mathbf v\in \mathbb Z^N_{\geq 0}}f_{\mathbf v}(\mathbf x) \frac{\partial^{\mathbf v}}{\pi^{\vert\mathbf v\vert}}
)\cdot 1=\sum_{\mathbf v\in \mathbb Z^N_{\geq 0}}f_{\mathbf v}(\mathbf x)   \mathbf t^{v_1\mathbf w_1+\cdots +v_N\mathbf w_N}$$ 
is an epimorphism of left $D^\dagger$-modules.
\end{lemma}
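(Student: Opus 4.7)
Part (ii) reduces to (i) via a tautological identification: the correspondence $\pi^{-|\mathbf v|}\partial^{\mathbf v}\leftrightarrow\mathbf y^{\mathbf v}$ defines a bijection $\iota:\mathcal D^\dagger\xrightarrow{\cong}K\langle\mathbf x,\mathbf y\rangle^\dagger$ of $K\langle\mathbf x\rangle^\dagger$-modules, because the defining condition ``$\Vert f_{\mathbf v}(\mathbf x)\Vert_r s^{|\mathbf v|}$ bounded'' is the same on both sides. By inspection $\varphi=\phi\circ\iota$, so surjectivity of $\varphi$ is equivalent to that of $\phi$, and it suffices to prove (i).

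For (i), given $u=\sum_{\mathbf w\in C(A)}a_{\mathbf w}(\mathbf x)\mathbf t^{\mathbf w}\in L^{\dagger\prime}$ with $\Vert a_{\mathbf w}\Vert_r s^{d(\mathbf w)}\leq M$ for some $r,s>1$, I choose for each $\mathbf w\in C(A)$ a nonnegative integer representation $\mathbf w=\sum_{i=1}^N v_i(\mathbf w)\mathbf w_i$ (which exists by definition of $C(A)$) and set
$$\tilde u=\sum_{\mathbf w\in C(A)} a_{\mathbf w}(\mathbf x)\,\mathbf y^{\mathbf v(\mathbf w)},$$
so that $\phi(\tilde u)=u$ by construction. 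The issue is to have $\tilde u\in K\langle\mathbf x,\mathbf y\rangle^\dagger$, i.e.\ $\Vert a_{\mathbf w}\Vert_r s'^{|\mathbf v(\mathbf w)|}$ bounded for some $s'>1$; in view of the hypothesis this will hold provided $|\mathbf v(\mathbf w)|\leq c\,d(\mathbf w)$ for a constant $c$ independent of $\mathbf w$, since we may then pick $s'>1$ with $s'^c\leq s$ and estimate $\Vert a_{\mathbf w}\Vert_r s'^{|\mathbf v(\mathbf w)|}\leq M(s'^c/s)^{d(\mathbf w)}\leq M$.

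The crux is therefore the combinatorial estimate: \emph{there exists $c>0$, depending only on $A$, such that every $\mathbf w\in C(A)$ admits $\mathbf v\in\mathbb Z^N_{\geq 0}$ with $\sum v_i\mathbf w_i=\mathbf w$ and $|\mathbf v|\leq c\,d(\mathbf w)$.} My plan is to recognize this as an integer-programming proximity statement. By definition, $d(\mathbf w)$ is the optimum value of the linear program $\min\{|\mathbf v|:\mathbf v\in\mathbb R^N_{\geq 0},\ \sum v_i\mathbf w_i=\mathbf w\}$, while $|\mathbf w|_A:=\min\{|\mathbf v|:\mathbf v\in\mathbb Z^N_{\geq 0},\ \sum v_i\mathbf w_i=\mathbf w\}$ is the corresponding integer optimum, finite precisely because $\mathbf w\in C(A)$. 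A standard proximity theorem in integer linear programming bounds the $\ell_\infty$-distance between an LP optimum and the nearest feasible integer point by a constant depending only on the subdeterminants of $A$, yielding an additive bound $|\mathbf w|_A\leq d(\mathbf w)+G$ for a constant $G=G(A)$.

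Combined with the observation $d(\mathbf w)\geq 1/d$ on $C(A)\setminus\{0\}$ (from the paper's statement that $d(\mathbf w)\in(1/d)\mathbb Z$), this additive bound upgrades to the multiplicative bound $|\mathbf w|_A\leq(1+Gd)\,d(\mathbf w)$, so $c=1+Gd$ works, and the case $\mathbf w=0$ is trivial. Choosing $\mathbf v(\mathbf w)$ to attain $|\mathbf w|_A$ then completes (i), and (ii) follows by the first paragraph. The only non-trivial ingredient is the integer-programming proximity estimate; everything else is direct manipulation, and this is where I expect the real work to be.
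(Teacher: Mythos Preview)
Your argument is correct, but it takes a different route from the paper. The paper proceeds by first decomposing $\Delta$ into simplices $\tau$ with vertex sets $\{0,\mathbf w_{i_1},\ldots,\mathbf w_{i_n}\}$, writing $C(A)$ as a finite union of translates $\mathbf u+C(\tau)$ via Dickson's lemma (Lemma~\ref{comb}), and then lifting each piece $\sum_{\mathbf w\in\mathbf u+C(\tau)}a_{\mathbf w}(\mathbf x)\mathbf t^{\mathbf w}$ explicitly using the additivity $d(\mathbf u+v_1\mathbf w_{i_1}+\cdots+v_n\mathbf w_{i_n})=d(\mathbf u)+v_1+\cdots+v_n$ on the simplicial cone $\delta(\tau)$. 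Your approach instead recognizes $d(\mathbf w)$ as the LP relaxation of the integer program defining $|\mathbf w|_A$ and invokes a Cook--Gerards--Schrijver--Tardos type proximity bound to get $|\mathbf w|_A\le d(\mathbf w)+G$, then upgrades to a multiplicative bound via the gap $d(\mathbf w)\ge 1/d$.

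Both arguments ultimately establish the same inequality $|\mathbf v(\mathbf w)|\le c\,d(\mathbf w)$ for a suitable lift, and your reduction of (ii) to (i) matches the paper's. Your route is shorter and conceptually clean, at the cost of importing a nontrivial result from integer programming; the paper's is entirely elementary and self-contained. Note also that the paper's simplicial decomposition and Lemma~\ref{comb} are not dead weight: they are reused verbatim in the proofs of Theorem~\ref{coherent} and Lemma~\ref{flat}, so the infrastructure pays for itself downstream, whereas your argument would leave those later proofs needing the decomposition anyway.
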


\begin{proof} ${}$

(i) Let $\Delta$ be the convex hull of $\{0, \mathbf w_1, \ldots, \mathbf w_N\}$. 
Decompose $\Delta$ into a finite union $\bigcup_\tau \tau$
so that each $\tau$ is a  
simplex of dimension $n$ with vertices $\{0,\mathbf w_{i_1},\ldots, 
\mathbf w_{i_n}\}$ for some subset $\{i_1,\ldots, i_n\}\subset \{1,\ldots, N\}$. For each $\tau$, let $\delta(\tau)$ be the cone 
generated by $\tau$, and let 
\begin{eqnarray*}
B(\tau)&=&\mathbb Z^n\cap \{c_1\mathbf w_{i_1}+\cdots +c_n\mathbf w_{i_n}:\; 0\leq c_i< 1\},\\
C(\tau)&=& \{k_1\mathbf w_{i_1}+\cdots +k_n\mathbf w_{i_n}:\; k_i\in\mathbb Z_{\geq 0}\}.
\end{eqnarray*}
Being a discrete bounded set, $B(\tau)$ is finite. Every element $\mathbf w\in \mathbb Z^n\cap \delta(\tau)$ can be written uniquely as 
$$\mathbf w=b(\mathbf w)+c(\mathbf w)$$ with $b(\mathbf w)\in B(\tau)$ and $c(\mathbf w)\in C(\tau)$.
So we have $\mathbb Z^n\cap\delta(\tau)=\bigcup_{\mathbf w\in B(\tau)}(\mathbf w+C(\tau))$, and hence
$$C(A)=\bigcup_\tau(C(A)\cap \delta(\tau))=\bigcup_\tau \bigcup_{\mathbf w\in B(\tau)}\big (C(A)\cap (\mathbf w+C(\tau))\big ).$$
For each $C(A)\cap (\mathbf w+C(\tau))$, the map 
$$\mathbb Z^n_{\geq 0}\to \mathbf w+C(\tau),\quad (k_1, \ldots, k_n)\mapsto \mathbf w+k_1\mathbf w_{i_1}+\cdots +k_n\mathbf w_{i_n}$$
is a bijection. Applying Lemma \ref{comb} to the inverse image of $C(A)\cap (\mathbf w+C(\tau))$, we can find finitely many 
$\mathbf u_1, \ldots, \mathbf u_m\in C(A)\cap (\mathbf w+C(\tau))$ such that 
$$C(A)\cap (\mathbf w+C(\tau))= \bigcup_{i=1}^m (\mathbf u_i+C(\tau)). $$
We thus decompose $C(A)$ into a finite union of subsets of the form $\mathbf u+C(\tau)$ such that
$\tau$ is a simplex 
of dimension $n$ with vertices $\{0,\mathbf w_{i_1},\ldots, 
\mathbf w_{i_n}\}$ for some subset $\{i_1,\ldots, i_n\}\subset \{1,\ldots, N\}$, and 
$\mathbf u\in 
C(A)\cap (\mathbf w+C(\tau))$ for some $\mathbf w\in B(\tau)$. Elements in $L^{\dagger\prime}$ is a sum of elements 
of the form $\sum_{\mathbf w\in \mathbf u+C(\tau)} a_{\mathbf w}(\mathbf x)\mathbf t^{\mathbf w}$, where 
$a_{\mathbf w}(\mathbf x)\in K\{r^{-1}\mathbf x\}$ and  
$\Vert a_{\mathbf w}(\mathbf x)\Vert_r  s^{\vert \mathbf w\vert}$ are bounded for some $r,s>1$. 
To prove
$\phi: K\langle \mathbf x, \mathbf y\rangle ^\dagger\to L^{\dagger\prime}$ 
is surjective, it suffices to show $\sum_{\mathbf w\in \mathbf u+C(\tau)} a_{\mathbf w}(\mathbf x)\mathbf t^{\mathbf w}$
lies in the image of $\phi$.
Write $\mathbf u=c_1\mathbf w_1+\cdots+c_N\mathbf w_N$, where $c_i\in\mathbb Z_{\geq 0}$. A preimage for 
$\sum_{\mathbf w\in \mathbf u+C(\tau)} a_{\mathbf w}(\mathbf x)\mathbf t^{\mathbf w}$ is
\begin{eqnarray*}
\sum_{v_1, \ldots, v_n\geq 0} a_{\mathbf u+v_1 \mathbf w_{i_1}+\cdots +v_n\mathbf w_{i_n}} (\mathbf x)
y_{i_1}^{c_{i_1}+v_{1}}\cdots y_{i_n}^{c_{i_n}+v_{n}}\prod_{j\in\{1, \ldots,N\}-\{i_1,\ldots,i_n\}}
y_j^{c_j}.
\end{eqnarray*}
Here to verify this element lies in $K\langle \mathbf x, 
\mathbf y\rangle^\dagger$, we use the fact that 
$$\vert \mathbf v\vert \leq C \vert v_1\mathbf w_{i_1}+\cdots+v_n\mathbf w_{i_n}\vert$$ for some constant $C$
since $\mathbf w_{i_1},\ldots, \mathbf w_{i_n}$ form a basis of $\mathbb R^n$. So we have 
\begin{eqnarray*}
&& \Vert a_{\mathbf u+v_1 \mathbf w_{i_1}+\cdots +v_n\mathbf w_{i_n}} (\mathbf x)\Vert_r (\sqrt[C]{s})^{c_1+\cdots+c_N+v_1+\cdots+v_n}\\
&\leq&(\sqrt[C]{s})^{c_1+\cdots+c_N} 
\Vert a_{\mathbf u+v_1 \mathbf w_{i_1}+\cdots +v_n\mathbf w_{i_n}} (\mathbf x)\Vert_r s^{\vert v_1\mathbf w_{i_1}+\cdots+v_n\mathbf w_{i_n}\vert}\\
&\leq &s^{\vert -\mathbf u\vert} (\sqrt[C]{s})^{c_1+\cdots+c_N} 
\Vert a_{\mathbf u+v_1 \mathbf w_{i_1}+\cdots +v_n\mathbf w_{i_n}} (\mathbf x)\Vert_r s^{\vert \mathbf u+ v_1\mathbf w_{i_1}+\cdots+v_n\mathbf w_{i_n}\vert}.
\end{eqnarray*}
Hence $\Vert a_{\mathbf u+v_1 \mathbf w_{i_1}+\cdots +v_n\mathbf w_{i_n}} (\mathbf x)\Vert_r (\sqrt[C]{s})^{c_1+\cdots+c_N+v_1+\cdots+v_n}$ are bounded. 
This proves that $\phi: K\langle \mathbf x, \mathbf y\rangle ^\dagger\to L^{\dagger\prime}$  
is surjective. 

(ii) By (i), the map $\varphi: D^\dagger\to L^{\dagger\prime}$ is surjective. For any $f\in L^{\dagger\prime}$, choose $Q\in D^\dagger$ such that 
$\varphi(Q)=f$. For any $P\in D^\dagger$, we have $$Pf=P\varphi(Q)=\varphi(PQ)\in L^{\dagger\prime}.$$ 
So $L^{\dagger\prime}$ is a left $D^\dagger$-module. Then $\varphi$ is a homomorphism of left $D^\dagger$-modules. 
\end{proof} 

\subsection{}\label{GKZprime} Let $$C^k( L^{\dagger\prime})=
\{\sum_{1\leq i_1<\cdots < i_k\leq n}f_{i_1\ldots i_k}(\mathbf x,\mathbf t)\frac{\mathrm dt_{i_1}}{t_{i_1}}\wedge \cdots\wedge \frac{\mathrm dt_{i_k}}{t_{i_k}}:
\;f_{i_1\ldots i_k}(\mathbf x,\mathbf t)
\in  L^{\dagger\prime}\}.$$ Note that $d: C^k( L^\dagger)\to C^{k+1}( L^\dagger)$
(resp. $\nabla_{\frac{\partial}{\partial x_j}}$) maps $C^k( L^{\dagger\prime})$ to $C^{k+1}( L^{\dagger\prime})$ 
(resp. $C^k( L^{\dagger\prime})$), where 
\begin{eqnarray*}
d&=&\big(\mathbf t^\gamma \exp(\pi F(\mathbf x,\mathbf t))\big)^{-1} \circ \mathrm d_{\mathbf t} \circ 
\big( \mathbf t^\gamma\exp(\pi F(\mathbf x,\mathbf t))\big)=
\mathrm d_{\mathbf t}+ \sum_{i=1}^n \Big(\gamma_i+\sum_{j=1}^N \pi w_{ij}x_j\mathbf t^{\mathbf w_j}\Big)\
\frac{\mathrm dt_i}{t_i}\wedge,\\
\nabla_{\frac{\partial}{\partial x_j}}&=& \big(\mathbf t^\gamma \exp(\pi F(\mathbf x,\mathbf t))\big)^{-1} \circ\frac{\partial}{\partial x_j}\circ 
\big( \mathbf t^\gamma\exp(\pi F(\mathbf x,\mathbf t))\big)
=\frac{\partial}{\partial x_j}+\pi  \mathbf t^{\mathbf w_j}. 
\end{eqnarray*}
So $C^\cdot ( L^{\dagger\prime})$ is a subcomplex 
of $C^\cdot( L^\dagger)$.
Let 
\begin{eqnarray*}
F_{i,\gamma}=\big(\mathbf t^\gamma \exp (\pi F(\mathbf x,\mathbf t))\big)^{-1}
\circ t_i \frac{\partial}{\partial t_i} \circ  \big(\mathbf t^\gamma\exp (\pi F(\mathbf x,\mathbf t))\big)
=t_i\frac{\partial}{\partial t_i}+\gamma_i+\pi\sum_{j=1}^N w_{ij}x_j \mathbf t^{\mathbf w_j}.
\end{eqnarray*}
It follows from the definition of the logarithmic twisted de Rham complex that the homomorphism 
$$ L^{\dagger\prime}\to C^n(  L^{\dagger\prime}),\quad f(\mathbf x,\mathbf t)\mapsto f(\mathbf x,\mathbf t)\frac{\mathrm dt_1}{t_1}\wedge \cdots\wedge \frac{\mathrm dt_n}{t_n}$$
induces an isomorphism
 $$   L^{\dagger\prime}
/ \sum_{i=1}^n F_{i,\gamma} L^{\dagger\prime}\cong H^n(C^\cdot( L^{\dagger\prime})).
$$  
Let 
\begin{eqnarray*}
\Lambda&=&\{\lambda=(\lambda_1, \ldots, \lambda_N)\in\mathbb Z^N:\;\sum_{j=1}^N
\lambda_j{\mathbf w}_j=0\},\\
\Box_{\lambda}&=&\prod_{\lambda_j>0} 
\left(\frac{1}{\pi}\frac{\partial}{\partial x_j}\right)^{\lambda_j}-
\prod_{\lambda_j<0} \left(\frac{1}{\pi}\frac{\partial}{\partial x_j}\right)^{-\lambda_j}\quad (\lambda\in \Lambda),\\
E_{i,\gamma}&=&\sum_{j=1}^N w_{ij} x_j\frac{\partial}{\partial x_j}+\gamma_i\quad (i=1,\ldots, n),
\end{eqnarray*}

\begin{proposition}\label{grobner} The homomorphism of $D^\dagger$-modules
$$\varphi: D^\dagger\to  L^{\dagger\prime}, \quad \sum_{\mathbf v\in \mathbb Z^N_{\geq 0}}f_{\mathbf v}(\mathbf x) 
\frac{\partial^{\mathbf v}}{\pi^{\vert \mathbf v\vert}}
\mapsto\Big(\sum_{\mathbf v\in \mathbb Z^N_{\geq 0}}f_{\mathbf v}(\mathbf x)\frac{\partial^{\mathbf v}}{\pi^{\vert \mathbf v\vert}}
\Big)\cdot 1= \sum_{\mathbf v\in \mathbb Z^N_{\geq 0}}f_{\mathbf v}(\mathbf x)   \mathbf t^{v_1\mathbf w_1+\cdots +v_N\mathbf w_N}$$  induces isomorphisms
\begin{eqnarray*}
D^\dagger/\sum_{\lambda\in \Lambda}D^\dagger \Box_\lambda&\stackrel \cong\to&  L^{\dagger\prime},\\
D^\dagger/(\sum_{i=1}^n D^\dagger E_{i,\gamma}+\sum_{\lambda\in \Lambda}D^\dagger \Box_\lambda)
&\stackrel \cong\to&  L^{\dagger\prime}
/ \sum_{i=1}^n F_{i,\gamma}  L^{\dagger\prime}\cong H^n(C^\cdot( L^{\dagger\prime})).
\end{eqnarray*}
Moreover, there exist finitely many $\mu_1, \ldots, \mu_m \in\Lambda$ such that 
$$\sum_{i=1}^mD^\dagger \Box_{\mu_m}=\sum_{\lambda\in \Lambda}D^\dagger \Box_\lambda.$$
\end{proposition}

\begin{proof} We have shown that $\varphi$ is surjective in Lemma \ref{LL}. 
One can verify that $\varphi(\Box_\lambda)=0$ for all $\lambda\in\Lambda$. 
Suppose $\sum_{\mathbf v} f_{\mathbf v}(\mathbf x) \frac{\partial^{\mathbf v}}{\pi^{\vert\mathbf v\vert}}$ lies in the kernel of $\varphi$, that is, 
$$\sum_{\mathbf v} f_{\mathbf v}(\mathbf x)\mathbf t^{v_1\mathbf w_1+\cdots+v_N\mathbf w_N}=0,$$
where $f_{\mathbf v}(\mathbf x)\in K\{r^{-1}\mathbf x\}$ and $\Vert f_{\mathbf v}(\mathbf x)\Vert_r s^{\vert\mathbf v\vert}$ are bounded for some $r,s>1$.  
For each $\mathbf w\in C(A)$, let $$S_{\mathbf w}=\{\mathbf v\in \mathbb Z_{\geq 0}^N:\; \mathbf w=v_1\mathbf w_1+\cdots+v_N\mathbf w_N\}.$$ Then we have 
$$\sum_{\mathbf v\in S_{\mathbf w}} f_{\mathbf v}(\mathbf x)=0.$$ 
For each nonempty $S_{\mathbf w}$, choose an element $\mathbf v^{(0)}=(v^{(0)}_1, \ldots, v^{(0)}_N)\in S_{\mathbf w}$ such that 
$\vert\mathbf v^{(0)}\vert $ is minimal in $S_{\mathbf w}$.
For any $\mathbf v\in S_{\mathbf w}$, let $\lambda_{\mathbf v}=\mathbf v-\mathbf v^{(0)}$. 
We have $\lambda_{\mathbf v}\in\Lambda$ and 
\begin{eqnarray*}
\frac{\partial^{\mathbf v}}{\pi^{\vert\mathbf v\vert}}-\frac{\partial^{\mathbf v^{(0)}}}{\pi^{\vert \mathbf v^{(0)}\vert}}
&=&\frac{\partial^{\min(\mathbf v,\mathbf v^{(0)})}}{\pi^{\sum_j \min(v_j, v_j^{(0)})}}\Big(
\prod_{v_j>v^{(0)}_j}\Big(\frac{1}{\pi}\frac{\partial}{\partial x_j}\Big)^{v_j-v^{(0)}_j} -
\prod_{v_j<v^{(0)}_j}\Big(\frac{1}{\pi}\frac{\partial}{\partial x_j}\Big)^{v^{(0)}_j-v_j}\Big)\\
&=&\frac{\partial^{\min(\mathbf v,\mathbf v^{(0)})}}{\pi^{\sum_j \min(v_j, v_j^{(0)})}}\Box_{\lambda_{\mathbf v}}.
\end{eqnarray*}
So $\frac{\partial^{\mathbf v}}{\pi^{\vert\mathbf v\vert}}-\frac{\partial^{\mathbf v^{(0)}}}{\pi^{\vert \mathbf v^{(0)}\vert}}$ lies 
in the ideal $\sum_{\lambda\in\Lambda} D^\dagger\Box_\lambda$.

Let 
$D=K\Big[\frac{1}{\pi}\frac{\partial}{\partial x_1},\ldots, \frac{1}{\pi} \frac{\partial}{\partial x_N}\Big]$. 
We have an isomorphism of rings $$\phi: D\to K[z_1,\ldots,z_N],\quad \frac{1}{\pi}\frac{\partial}{\partial x_i}\mapsto z_i\;(i=1,\ldots,N).$$ Let 
$\tilde{\Box}_\lambda=\phi(\Box_\lambda)$ ($\lambda\in \Lambda$), and 
let $f_{\mathbf v}=\phi\big(\frac{\partial^{\mathbf v}}{\pi^{\vert\mathbf v\vert}}-\frac{\partial^{\mathbf v^{(0)}}}
{\pi^{\vert \mathbf v^{(0)}\vert}}\big)=\mathbf z^{\mathbf v}-\mathbf z^{\mathbf v^{(0)}}.$ 
Then we have
$$f_{\mathbf v}=\mathbf z^{\min(\mathbf v,\mathbf v^{(0)})}\tilde \Box_{\lambda_{\mathbf v}}.$$ 
Let $\mu_1, \ldots, \mu_m$ be as in Lemma \ref{annoy} below.  We can write
$$\tilde \Box_{\lambda_{\mathbf v}}=b^{(1)} \tilde\Box_{\mu_1}+\cdots +b^{(m)}\tilde\Box_{\mu_m}$$
where each $b^{(i)}$ is a polynomial with integer coefficients whose total degree does not exceed that of 
$\tilde \Box_{\lambda_{\mathbf v}}$. We can thus write 
$$f_{\mathbf v}=a^{(1)}_{\mathbf v} \tilde\Box_{\mu_1}+\cdots +a^{(m)}_{\mathbf v}\tilde\Box_{\mu_m},$$
where each $a^{(i)}$ is a polynomials with integer coefficients whose total degree does not exceed 
$\mathrm{totdeg}\,f_{\mathbf v}=\vert \mathbf v\vert$. So we can write $a^{(i)}_{\mathbf v}=\sum_{\mathbf u}a_{\mathbf{vu}}^{(i)}\mathbf z^{\mathbf u}$
such that $a_{\mathbf{vu}}^{(i)}$ are integers and $a_{\mathbf{vu}}^{(i)}=0$ for $\vert \mathbf u\vert>\vert \mathbf v\vert$. 
We then have
$$\frac{\partial^{\mathbf v}}{\pi^{\vert\mathbf v\vert}}-\frac{\partial^{\mathbf v^{(0)}}}{\pi^{\vert \mathbf v^{(0)}\vert}}
=\Big(\sum_{\mathbf u}
a^{(1)}_{\mathbf v\mathbf u}\frac{\partial^{\mathbf u}}{\pi^{\vert\mathbf u\vert}}\Big)\Box_{\mu_1}+\cdots+
\Big(\sum_{\mathbf u}
a^{(m)}_{\mathbf v\mathbf u}\frac{\partial^{\mathbf u}}{\pi^{\vert\mathbf u\vert}}\Big)\Box_{\mu_m}.$$ 
So
\begin{eqnarray*}
\sum_{\mathbf v} f_{\mathbf v}(\mathbf x) \frac{\partial^{\mathbf v}}{\pi^{\vert\mathbf v\vert}}
&=&\sum_{\mathbf w\in C(A)}
\sum_{\mathbf v\in S_{\mathbf w}}  f_{\mathbf v}(\mathbf x)\frac{\partial^{\mathbf v}}{\pi^{\vert\mathbf v\vert}}\\
&=& \sum_{\mathbf w\in C(A)}\sum_{\mathbf v\in S_{\mathbf w}}  f_{\mathbf v}(\mathbf x) \Big(\frac{\partial^{\mathbf v}}{\pi^{\vert\mathbf v\vert}}
-\frac{\partial^{\mathbf v^{(0)}}}{\pi^{\vert\mathbf v^{(0)}\vert}}\Big)\\
&=& \sum_{k=1}^m\Big( \sum_{\mathbf u}\sum_{\mathbf w\in C(A), \mathbf v\in S_{\mathbf w}}
a^{(k)}_{\mathbf v\mathbf u}  f_{\mathbf v}(\mathbf x)  \frac{\partial^{\mathbf u}}{\pi^{\vert\mathbf u\vert}}\Big) \Box_{\mu_k}.
\end{eqnarray*}
So $\sum_{\mathbf v} f_{\mathbf v}(\mathbf x) \frac{\partial^{\mathbf v}}{\pi^{\vert\mathbf v\vert}}$ lies in the left ideal of
$D^\dagger$ generated by 
$\Box_{\mu_k}$, and hence we have 
$$\mathrm{ker}\,\varphi=\sum_{k=1}^mD^\dagger\Box_{\mu_k}=\sum_{\lambda\in\Lambda} D^\dagger\Box_{\lambda}.$$ 
Here we need to check $\sum_{\mathbf u}\sum_{\mathbf w\in C(A), \mathbf v\in S_{\mathbf w}}
a^{(k)}_{\mathbf v\mathbf u}  f_{\mathbf v}(\mathbf x)  \frac{\partial^{\mathbf u}}{\pi^{\vert\mathbf u\vert}}$ is a well-defined element in 
$D^\dagger$. First note that $\sum_{\mathbf w\in C(A), \mathbf v\in S_{\mathbf w}}
a^{(k)}_{\mathbf v\mathbf u}  f_{\mathbf v}(\mathbf x)$ converges in $K\langle r^{-1} \mathbf x\rangle$ since 
$\Vert f_{\mathbf v}(\mathbf x)\Vert_r s^{\vert\mathbf v\vert}$ are bounded and $\vert a^{(k)}_{\mathbf v\mathbf u} \vert \leq 1$. Moreover,
since $a_{\mathbf v\mathbf u}^{(i)}=0$ for $\vert\mathbf v\vert<\vert \mathbf u\vert$, we have 
\begin{eqnarray*}
\Vert \sum_{\mathbf w\in C(A), \mathbf v\in S_{\mathbf w}}
a^{(k)}_{\mathbf v\mathbf u}  f_{\mathbf v}(\mathbf x)  \Vert_r s^{\vert\mathbf u\vert}&\leq &
\max_{\mathbf w\in C(A), \mathbf v\in S_{\mathbf w},\; \vert\mathbf v\vert\geq \vert \mathbf u\vert} 
\Vert  a^{(k)}_{\mathbf v\mathbf u}  f_{\mathbf v}(\mathbf x)\Vert_r s^{\vert\mathbf u\vert}\\
&\leq&\max_{\mathbf v} 
\Vert f_{\mathbf v}(\mathbf x)\Vert_r s^{\vert\mathbf v\vert}.
\end{eqnarray*}
So $\sum_{\mathbf u}\sum_{\mathbf w\in C(A), \mathbf v\in S_{\mathbf w}}
a^{(k)}_{\mathbf v\mathbf u}  f_{\mathbf v}(\mathbf x)  \frac{\partial^{\mathbf u}}{\pi^{\vert\mathbf u\vert}}$ does define an element in 
$D^\dagger$.

For any $g_i \in L^{\dagger\prime}$ $(i=1, \ldots, n)$, choose $P_i\in D^\dagger$ such that $\varphi(P_i)=g_i$. 
One can check directly that $E_{i, \gamma}(1)=F_{i,\gamma}(1)$. Moreover, $F_{i,\gamma}$ commutes with each 
$\nabla_{\frac{\partial }{\partial x_j}}$ and hence with $P_i$. So
we have 
\begin{eqnarray*}
\varphi(\sum_i P_iE_{i,\gamma})= \sum_i P_i E_{i,\gamma}(1)
=\sum_i P_i F_{i,\gamma}(1)
= \sum_i F_{i,\gamma}P_i(1)
= \sum_i F_{i,\gamma} \varphi(P_i)
= \sum_i F_{i,\gamma} g_i.
\end{eqnarray*}
Therefore
$$\varphi(\sum_i D^\dagger E_{i,\gamma})=\sum_i F_{i,\gamma} L^{\dagger\prime}.$$
Together with the fact that $\varphi$ is surjective and $\mathrm{ker}\,\varphi=\sum_{\lambda\in\Lambda} D^\dagger\Box_\lambda$, we get
$$D^\dagger/\sum_{\lambda\in \Lambda}D^\dagger \Box_\lambda\cong  
L^{\dagger\prime},\quad
D^\dagger/(\sum_{i=1}^nD^\dagger E_{i,\gamma}+\sum_{\lambda\in \Lambda}D^\dagger \Box_\lambda)\cong  
L^{\dagger\prime}/ \sum_{i=1}^n F_{i,\gamma}  L^{\dagger\prime}.$$
\end{proof}

\begin{lemma}\label{annoy} For any $\lambda=(\lambda_1, \ldots, \lambda_N)\in\Lambda$, let 
$$\lambda^+=(\max(\lambda_1, 0), \ldots, \max(\lambda_N,0)),\quad 
\lambda^-=(\max(-\lambda_1, 0), \ldots, \max(-\lambda_N,0)),\quad 
\tilde \Box_\lambda=\mathbf z^{\lambda^+}-\mathbf z^{\lambda^-}.$$ 
There exist finitely many $\mu_1, \ldots, \mu_m\in \Lambda$ such that for any $\lambda\in\Lambda$, we can write 
$$\tilde \Box_\lambda=a^{(1)}\tilde\Box_{\mu_1}+\cdots + a^{(m)}\tilde\Box_{\mu_m},$$
where $a^{(i)}$ are polynomials with integer coefficients
such that their total degrees do not exceed the total degree $\max(\vert\lambda^+\vert, \vert\lambda^-\vert)$ of $\tilde\Box_\lambda$. 
\end{lemma}

\begin{proof} Consider the set $S=\{(\lambda^+, \lambda^-):\, \lambda\in \Lambda-\{0\}\}\subset \mathbf Z^{2N}_{\geq 0}$. By Lemma \ref{comb}, 
there exists finitely many $\mu_1, \ldots, \mu_m\in \Lambda-\{0\}$ such that 
$$S\subset \bigcup_{i=1}^m \Big((\mu_i^+, \mu_i^-)+ \mathbb Z^{2N}_{\geq 0}\Big).$$ 
We prove the lemma by induction on $\vert\lambda^+\vert+ \vert\lambda^-\vert$. For any $\lambda\in\Lambda$, we can find some 
$\mu_i$ such that $(\lambda^+,\lambda^-)\in (\mu_i^+, \mu_i^-)+ \mathbb Z^{2N}_{\geq 0}$. Then $\lambda-\mu_i\in\Lambda$ and 
$$(\lambda-\mu_i)^+=\lambda^+-\mu_i^+, \quad (\lambda-\mu_i)^-=\lambda^--\mu_i^-.$$
In the case where 
$\vert\mu_i^+\vert\geq \vert\mu_i^-\vert$, we have 
\begin{eqnarray*}
\tilde \Box_\lambda&=&\mathbf z^{\lambda^+}-\mathbf z^{\lambda^-}\\
&=& \mathbf z^{\lambda^+-\mu_i^+}(\mathbf z^{\mu_i^+}-\mathbf z^{\mu_i^-})+ 
\mathbf z^{\mu_i^-}(\mathbf z^{\lambda^+-\mu_i^+}-\mathbf z^{\lambda^--\mu_i^-})\\
&=&  \mathbf z^{\lambda^+-\mu_i^+}\tilde\Box_{\mu_i}+\mathbf z^{\mu_i^-}\tilde\Box_{\lambda-\mu_i}.
\end{eqnarray*}
Note that $\vert (\lambda-\mu_i)^+\vert+ \vert(\lambda-\mu_i)^-\vert<\vert\lambda^+\vert+\vert\lambda^-\vert$. By the induction hypothesis, we can write 
$$\tilde\Box_{\lambda-\mu_i}=a^{(1)}\tilde\Box_{\mu_1}+\cdots + a^{(m)}\tilde\Box_{\mu_m}$$ such that 
$a^{(j)}$ are polynomials with integer coefficients
of total degree $\leq \max(\vert\lambda^+-\mu_i^+\vert, \vert\lambda^--\mu_i^-\vert)$.
Then we have 
$$\tilde \Box_\lambda= \mathbf z^{\lambda^+-\mu_i^+}\tilde\Box_{\mu_i}+\sum_{j=1}^m \mathbf z^{\mu_i^-} a^{(j)}\tilde\Box_{\mu_j}.$$
Since we are in the case $\vert\mu_i^+|\geq \vert\mu_i^-|$, the total degrees of $\mathbf z^{\mu_i^-} a^{(j)}$ are $\leq\max(\vert \lambda^+\vert, \vert\lambda^-\vert)$.
This gives the required expression for $\tilde \Box_\lambda$. 
In the case where 
$\vert\mu_i^+\vert\leq \vert\mu_i^-\vert$, we have 
\begin{eqnarray*}
\tilde \Box_\lambda&=&\mathbf z^{\lambda^+}-\mathbf z^{\lambda^-}\\
&=& \mathbf z^{\lambda^--\mu_i^-}(\mathbf z^{\mu_i^+}-\mathbf z^{\mu_i^-})+ 
\mathbf z^{\mu_i^+}(\mathbf z^{\lambda^+-\mu_i^+}-\mathbf z^{\lambda^--\mu_i^-})\\
&=&  \mathbf z^{\lambda^--\mu_i^+}\tilde\Box_{\mu_i}+\mathbf z^{\mu_i^+}\tilde\Box_{\lambda-\mu_i}.
\end{eqnarray*}
We conclude as before using the induction hypothesis. 
\end{proof}

\subsection{Proof of Proposition \ref{coherent}}\label{co}  
We have shown that $L^{\dagger\prime}$ is a $D^\dagger$-module in 
Lemma \ref{LL}. It is known that $D^\dagger$ is coherent (\cite{Hu}). 
So by Proposition \ref{grobner}, $L^{\dagger\prime}$ is a coherent $D^\dagger$-module. 
 
Keep the notation in the proof of Lemma \ref{LL}. Decompose $\Delta$ into a finite union $\bigcup_\tau \tau$
so that each $\tau$ is a  
simplex of dimension $n$ with vertices $\{0,\mathbf w_{i_1},\ldots, 
\mathbf w_{i_n}\}$ for some subset $\{i_1,\ldots, i_n\}\subset \{1,\ldots, N\}$.
Let $B$ be the finite set $\bigcup_{\tau}B(\tau)$. Consider the map $$\psi:
\bigoplus_{\beta\in B} L^{\dagger\prime}\to L^\dagger,\quad (f_\beta)\mapsto \sum_{\beta\in B}f_\beta {\mathbf t}^\beta.
$$
We will show $\psi$ is surjective. As in the proof of Lemma \ref{LL} (ii), this implies that $L^\dagger$ is a left $D^\dagger$-module
and $\psi$ is a homomorphism of $D^\dagger$-modules. We will prove
$\mathrm{ker}\,\psi$ is a finitely generated $D^\dagger$-module. Combined with the fact that 
$L^{\dagger\prime}$ is a coherent $D^\dagger$-module, this implies that $L^{\dagger}$ is a coherent 
$D^\dagger$-module. 

We have 
$\mathbb Z^n\cap \delta=\bigcup_\tau (\mathbb Z^n\cap \delta(\tau)).$ To prove $\psi$ is surjective, it suffices to show every element in $L^\dagger$ of the form
$\sum_{\mathbf w\in \mathbb Z^n\cap \delta(\tau)} a_{\mathbf w}(\mathbf x) t^{\mathbf w}$ lies in the image of $\psi$,
where 
$a_{\mathbf w}(\mathbf x)\in K\{r^{-1}\mathbf x\}$ and  
$\Vert a_{\mathbf w}(\mathbf x)\Vert_r  s^{\vert \mathbf w\vert}$ are bounded for some $r,s>1$. 
Every element $\mathbf w\in \mathbb Z^n\cap \delta(\tau)$ can be written uniquely as 
$$\mathbf w=b(\mathbf w)+c(\mathbf w)$$ with $b(\mathbf w)\in B(\tau)$ and $c(\mathbf w)\in C(\tau)$. We have 
$$\sum_{\mathbf w\in \mathbb Z^n\cap \delta(\tau)} a_{\mathbf w}(x)\mathbf t^{\mathbf w}=\sum_{\beta\in B(\tau)}\Big(\sum_{\mathbf w\in \mathbb Z^n\cap \delta(\tau),\;
b(\mathbf w)=\beta} a_{\mathbf w}(x)\mathbf t^{c(\mathbf w)}\Big) {\mathbf t}^\beta.$$
Note that $\sum\limits_{\mathbf w\in \mathbb Z^n\cap \delta(\tau),\;
b(\mathbf w)=\beta} a_{\mathbf w}(x) t^{c(\mathbf w)}$ lie in $L^{\dagger\prime}$ since
$$\Vert a_{\mathbf w}(\mathbf x)\Vert_r  s^{\vert c(\mathbf w)\vert} 
\leq (\Vert a_{\mathbf w}(\mathbf x)\Vert_r  s^{\vert \mathbf w\vert}) s^{\vert-b(\mathbf w)\vert}$$
are bounded. Thus $\psi$ is surjective. 

Given $\beta',\beta''\in B$, set 
\begin{eqnarray*}
L_{\beta',\beta''}&=&\{f\in L^{\dagger\prime}:\; f\mathbf t^{\beta'-\beta''}\in L^{\dagger \prime}\}, \\
S_{\beta',\beta''}&=&\{\mathbf w\in C(A): \mathbf w+\beta'-\beta''\in C(A)\}.
\end{eqnarray*}
We have $$L_{\beta',\beta''}=\bigcup_{r>1,\; s>1} \{\sum_{\mathbf w\in S_{\beta',\beta''}} a_{\mathbf w}(\mathbf x)\mathbf t^{\mathbf w}:
\; a_{\mathbf w}(\mathbf x)\in  K\{r^{-1}\mathbf x\},\; \Vert a_{\mathbf w}(\mathbf x)\Vert_r s^{\vert \mathbf w\vert} \hbox{ are bounded for some }
r, s>1\}.$$ 
We have
$S_{\beta',\beta''}+\mathbf w_j\subset S_{\beta',\beta''}$ for all $j$, and $L_{\beta',\beta''}$ is 
a $\mathcal D^\dagger$-submodule of $L^{\dagger\prime}$. For any $f\in L_{\beta',\beta''}$ and $\beta\in B$, let 
$$\iota_{\beta',\beta''}(f)_\beta=\left\{
\begin{array}{cl}
f &\hbox{if } \beta=\beta', \\
-ft^{\beta'-\beta''}&\hbox{if }\beta=\beta'', \\
0&\hbox{if } \beta\in B\backslash\{\beta',\beta''\}.
\end{array}
\right.$$ 
Then the map 
$$\iota_{\beta', \beta''}: L_{\beta', \beta''}\to \bigoplus_{\beta\in B} L^{\dagger\prime},\quad f\mapsto (\iota_{\beta',\beta''}(f)_\beta)_{\beta\in B}$$
is a homomorphism of $\mathcal D^\dagger$-modules and its image is contained in $\mathrm{ker}\, \psi$. We will prove 
each $L_{\beta',\beta''}$ is a finitely generated $\mathcal D^\dagger$-module, and $$\mathrm{ker}\, \psi=\sum_{\beta',\beta''}
\iota_{\beta', \beta''}(L_{\beta', \beta''}).$$
It follows that $\mathrm{ker}\,\psi$ is a finitely generated $D^\dagger$-module. 

We have
$$S_{\beta',\beta''}=\bigcup_\tau(S_{\beta',\beta''}\cap \delta(\tau))=
\bigcup_\tau \bigcup_{\mathbf w\in B(\tau)}(S_{\beta',\beta''}\cap (\mathbf w+C(\tau))).$$
By Lemma \ref{comb}, for each $S_{\beta',\beta''}\cap (\mathbf w+C(\tau))$, we can find finitely many 
$\mathbf u_1, \ldots, \mathbf u_m\in S_{\beta',\beta''}\cap (\mathbf w+C(\tau))$ such that 
$$S_{\beta',\beta''}\cap (\mathbf w+C(\tau))= \bigcup_{i=1}^m (\mathbf u_i+C(\tau)). $$
We thus decompose $S_{\beta',\beta''}$ into a finite union of subsets of the form $\mathbf u+C(\tau)$ such that
$\tau$ is a simplicial complex 
of dimension $n$ with vertices $\{0,\mathbf w_{i_1},\ldots, 
\mathbf w_{i_n}\}$ for some subset $\{i_1,\ldots, i_n\}\subset \{1,\ldots, N\}$, and 
$\mathbf u\in 
S_{\beta',\beta''}\cap (\mathbf w+C(\tau))$ for some $\mathbf w\in B(\tau)$. Then $L_{\beta',\beta''}$ is generated 
by these $\mathbf t^{\mathbf u}$ as a $D^\dagger$-module. Indeed, every element in $L_{\beta',\beta''}$ is a sum of elements 
of the form $\sum_{\mathbf w\in \mathbf u+C(\tau)} a_{\mathbf w}(\mathbf x)\mathbf t^{\mathbf w}$. 
We have 
\begin{eqnarray*}
\sum_{\mathbf w\in \mathbf u+C(\tau)} a_{\mathbf w}(\mathbf x)\mathbf t^{\mathbf w}
=\sum_{v_1, \ldots, v_n\geq 0} a_{\mathbf u+v_1 \mathbf w_{i_1}+\cdots +v_n\mathbf w_{i_n}} (\mathbf x)
\Big(\frac{1}{\pi} \frac{\partial}{\partial x_{i_1}}\Big)^{v_{1}}\cdots \Big(\frac{1}{\pi}\frac {\partial}{\partial x_{i_m}}\Big)^{v_{n}} \cdot \mathbf t^{\mathbf u}.
\end{eqnarray*}

Suppose $(f^{(0)}_\beta)\in\bigoplus_{\beta\in B}L^{\dagger\prime}$ is an element in 
$\mathrm{ker}\,\psi$. We then have
$$\sum_{\beta\in B} f^{(0)}_\beta \mathbf t^\beta=0.$$ Suppose $B=\{\beta_1, \ldots, \beta_k\}$, and write 
$$f^{(0)}_\beta=\sum_{\mathbf w\in C(A)} a_{\beta\mathbf w}(\mathbf x) \mathbf t^{\mathbf w}.$$ 
Define 
\begin{eqnarray*}
f_\beta^{(1)}&=&\sum_{\mathbf w\in C(A), \; \mathbf w+(\beta-\beta_1)\not\in C(A)} a_{\beta\mathbf w}(\mathbf x)t^{\mathbf w},\\
g_\beta^{(1)}&=&\sum_{\mathbf w\in C(A), \; \mathbf w+(\beta-\beta_1)\in C(A)} a_{\beta\mathbf w}(\mathbf x)t^{\mathbf w}.
\end{eqnarray*}
In particular, $f_{\beta_1}^{(1)}$ is $0$ since it is a sum over the empty set. 
We have $g_\beta^{(1)}\in L_{\beta,\beta_1}$ and
\begin{eqnarray}\label{fg}
(f^{(0)}_\beta)-\sum_{\alpha\in B\backslash\{\beta_1\}}\iota_{\alpha,\beta_1}(g^{(1)}_\alpha)=(f^{(1)}_\beta).
\end{eqnarray}
To verify this equation, we show it holds componentwisely. The equation clearly holds for those component $\beta\not=\beta_1$. 
Collecting the linear combination of $\mathbf t^{\mathbf w}$ with $\mathbf w\in C(A)$ on both sides of  
the equation $$\sum_{\beta\in B} f^{(0)}_\beta \mathbf t^{\beta-\beta_1}=0,$$
we get 
$$f_{\beta_1}^{(0)}+\sum_{\beta\in B\backslash\{\beta_1\}} g_\beta^{(1)} \mathbf t^{\beta-\beta_1}=0.$$
This is exactly the $\beta_1$ component of the equation (\ref{fg}).

In general, for $i=1, \ldots, k$, we define 
\begin{eqnarray*}
f_\beta^{(i)}&=&\sum_{\mathbf w\in C(A), \; \mathbf w+(\beta-\beta_1)\not\in C(A),\ldots,\; \mathbf w+(\beta-\beta_i)\not\in C(A)} a_{\beta\mathbf w}(\mathbf x)t^{\mathbf w},\\
g_\beta^{(i)}&=&\sum_{\mathbf w\in C(A), \; \mathbf w+(\beta-\beta_1)\not \in C(A),\ldots, \;  \mathbf w+(\beta-\beta_{i-1})\not\in C(A), 
\mathbf w+(\beta-\beta_i)\in C(A)} a_{\beta\mathbf w}(\mathbf x)t^{\mathbf w}.
\end{eqnarray*}
We have $g_\beta^{(i)}\in L_{\beta,\beta_i}$ and 
$$(f^{(i-1)}_\beta)-\sum_{\alpha\in B\backslash\{\beta_i\}}\iota_{\alpha,\beta_i}(g^{(i)}_\alpha)=(f^{(i)}_\beta).$$
We have $f^{(k)}_\beta=0$ for all $\beta\in B=\{\beta_1, \ldots, \beta_k\}$. So we have 
$$(f_\beta^{(0)})=\sum_{i=1}^k\sum_{\alpha\in B\backslash\{\beta_i\}}\iota_{\alpha,\beta_i}(g^{(i)}_\alpha).$$ 
Hence $\mathrm{ker}\, \psi=\sum_{\beta',\beta''}
\iota_{\beta', \beta''}(L_{\beta', \beta''}).$ This finishes the proof of Proposition \ref{coherent}. 

\medskip
The results in this section can be used to prove the following. 

\begin{lemma}\label{flat} ${}$ 

(i) $L^\dagger$ is flat over $K\langle \mathbf x\rangle^\dagger$. 

(ii) Let $\mathbf a=(a_1, \ldots, a_N)$ be a 
point in the closed polydisc $D(0, 1^+)^N$ such that $a_i\in K'$, where $K'$ is a finite extension of $K$. Regard $K'$ as a $K\langle \mathbf x\rangle^\dagger$-algebra
via the homomorphism $$K\langle \mathbf x\rangle^\dagger\to K',\quad x_j\mapsto a_j.$$ We have 
$L^\dagger\otimes_{K\langle \mathbf x\rangle^\dagger} K'\cong L_0^\dagger.$
\end{lemma}

\begin{proof} Let $R$ be the integer ring of $K$, and let 
\begin{eqnarray*}
R\langle \mathbf x\rangle^\dagger&=&\bigcup_{r>1}\{ \sum_{\mathbf v\in\mathbb Z^N_{\geq 0}} 
a_{\mathbf v}\mathbf x^{\mathbf v}:\;
a_{\mathbf v}\in R, \; \vert a_{\mathbf v}\vert r^{\vert \mathbf v\vert} \hbox{ are bounded }\}, \\
R\langle \mathbf x,\mathbf y\rangle^\dagger&=&\bigcup_{r>1, \; s>1}\{ \sum_{\mathbf u,\mathbf v\in\mathbb Z^N_{\geq 0}} 
a_{\mathbf u\mathbf v}\mathbf x^{\mathbf u}\mathbf y^{\mathbf v}:\;
a_{\mathbf u\mathbf v}\in R, \; \vert a_{\mathbf u\mathbf v}\vert r^{\vert \mathbf u\vert}s^{\vert\mathbf v\vert} \hbox{ are bounded }\}, \\
L_R^\dagger&=&\bigcup_{r>1, s>1} \{\sum_{\mathbf v\in\mathbb Z^N_{\geq 0},\; 
\mathbf w\in\mathbb Z^n\cap\delta} a_{\mathbf v\mathbf w} \mathbf x^{\mathbf v}\mathbf t^{\mathbf w}:\;  a_{\mathbf v\mathbf w}\in R,
\; \vert a_{\mathbf v\mathbf w}\vert r^{\vert \mathbf v\vert} s^{\vert \mathbf w\vert} \hbox { are bounded}\},\\
L_R^{\dagger\prime}&=&\bigcup_{r>1, s>1} \{\sum_{\mathbf v\in\mathbb Z^N_{\geq 0},\; 
\mathbf w\in C(A)} a_{\mathbf v\mathbf w} \mathbf x^{\mathbf v}\mathbf t^{\mathbf w}:\;  a_{\mathbf v\mathbf w}\in R,
\; \vert a_{\mathbf v\mathbf w}\vert r^{\vert \mathbf v\vert} s^{\vert \mathbf w\vert} \hbox { are bounded}\}.
\end{eqnarray*}
We have 
$$K\langle \mathbf x\rangle^\dagger\cong R\langle \mathbf x\rangle^\dagger\otimes_R K, \quad 
L^\dagger\cong L_R^\dagger\otimes_R K.$$ 
To prove $L^\dagger$ is flat over $K\langle \mathbf x\rangle^\dagger$, it suffices to show 
$L_R^\dagger$ is flat over $R\langle \mathbf x\rangle^\dagger$. 

Keep the notation in the proof of Lemma \ref{LL} and \ref{co}. The same proof shows that the following homomorphisms 
\begin{eqnarray*}
\bigoplus_{\beta\in B} L_R^{\dagger\prime}\to L_R^\dagger, && (f_\beta)\mapsto \sum_{\beta\in B}f_\beta {\mathbf t}^\beta,\\
R\langle \mathbf x, \mathbf y\rangle^\dagger\to L_R^{\dagger\prime}, &&
\sum_{\mathbf v\in \mathbb Z^N_{\geq 0}}f_{\mathbf v}(\mathbf x) 
\mathbf y^{\mathbf v}
\mapsto\sum_{\mathbf v\in \mathbb Z^N_{\geq 0}}f_{\mathbf v}(\mathbf x)  t^{v_1\mathbf w_N+\cdots +v_N\mathbf w_N}
\end{eqnarray*}
are surjective. By \cite{Fulton}, $R\langle\mathbf x, \mathbf y\rangle^\dagger$ is a Noetherian ring.
It follows that $L_R^\dagger$ is also Noetherian. We have 
\begin{eqnarray*}
L_R^\dagger /\pi^k L_R^\dagger \cong (R/\pi^k)[\mathbf x][\mathbb Z^n\cap \delta],\quad 
R\langle \mathbf x\rangle^\dagger/\pi^k R\langle x\rangle^\dagger\cong (R/\pi^k)[\mathbf x].
\end{eqnarray*}
So $L_R^\dagger /\pi^k L_R^\dagger $ is flat over $R\langle \mathbf x\rangle^\dagger /\pi^kR
\langle \mathbf x\rangle^\dagger $ for all $k$. 
By \cite[IV Th\'eor\`eme 5.6]{SGA1}, $L_R^\dagger$ is flat over $R\langle \mathbf x\rangle^\dagger$. 

Finally let's prove 
$L^\dagger\otimes_{K\langle \mathbf x\rangle^\dagger}K'\cong L^\dagger_0.$
One can verify directly that in the case where $K'=K$, the homomorphism 
$$L^\dagger \to L_0^\dagger,\quad \sum_{\mathbf w\in \mathbb Z^n\cap\delta}a_{\mathbf w}(\mathbf x)\mathbf t^{\mathbf w}
\mapsto  \sum_{\mathbf w\in \mathbb Z^n\cap\delta}a_{\mathbf w}(0)\mathbf t^{\mathbf w}$$ 
is surjective with kernel $(x_1, \ldots, x_N)L^\dagger$. This proves our assertion in the case where 
$K=K'$ and $\mathbf a=(0,\ldots, 0)$. In general, we have an isomorphism 
$L^\dagger\otimes_K K'\cong L_{K'}^\dagger$, 
where
$$L_{K'}^\dagger=\bigcup_{r>1, s>1} \{\sum_{\mathbf v\in\mathbb Z^N_{\geq 0},\; 
\mathbf w\in\mathbb Z^n\cap\delta} a_{\mathbf v\mathbf w} \mathbf x^{\mathbf v}\mathbf t^{\mathbf w}:\;  a_{\mathbf v\mathbf w}\in K',
\; \vert a_{\mathbf v\mathbf w}\vert r^{\vert \mathbf v\vert} s^{\vert \mathbf w\vert} \hbox { are bounded}\}.$$
By base change from $K$ to $K'$ and using this isomorphism,
we can reduce to the case where $K'=K$. Then using the automorphism 
$$K'\langle \mathbf x\rangle^\dagger \to K'\langle \mathbf x\rangle^\dagger,
\quad x_i\mapsto x_i-a_i, $$ we can reduce to the case where $\mathbf a=(0,\ldots, 0)$. 
\end{proof}

\section{Dwork's theory}

\subsection{}\label{trace}  Let 
\begin{eqnarray*}
\theta(z)=\exp(\pi z-\pi z^p),\quad \theta_m(z)=\exp(\pi z-\pi z^{p^m})=\prod_{i=0}^{m-1}\theta(z^{p^i}).
\end{eqnarray*}
Then $\theta_m (z)$ converges in a disc of radius $>1$, and the value $\theta(1)=\theta(z)|_{z=1}$ 
of the power series $\theta(z)$ at $z=1$ is a primitive $p$-th root of unity in $K$ (\cite[Theorems 4.1 and 4.3]{M1}). 
Let $\bar u\in\mathbb F_{p^m}$ and let $u\in \overline{\mathbb Q}_p$ be its Techm\"uller lifting so that $u^{p^m}=u$.  
Then we have (\cite[Theorem 4.4]{M1})
$$\theta_m(z)|_{z=u}=\theta(1)^{\mathrm{Tr}_{\mathbb F_{p^m}/\mathbb F_p}(\bar u)}.$$
From now on, we denote elements in finite fields by letters with bars such as $\bar u, \bar a_j, \bar u_i$ ... and denote their 
Techm\"uller liftings by the same letters without
bars such as $u,a_j, u_i$ ...  
Let $\psi_m:\mathbb F_{q^m}\to K^\ast$ be the additive character defined by 
$$\psi_m(\bar u)=\theta(1)^{\mathrm{Tr}_{\mathbb F_{q^m}/\mathbb F_p}(\bar u)}.$$
Then we have
$$\psi_m(\bar u)=\exp(\pi z-\pi z^{q^m})|_{z=u}.$$ 
Denote $\psi_1$ by $\psi$. We have $\psi_m=\psi\circ \mathrm{Tr}_{\mathbb F_{q^m}/\mathbb F_q}.$
Let $\bar a_1,\ldots, \bar a_N\in \mathbb F_q$. 
For any $\bar u_1,\ldots, \bar u_n \in \mathbb F_{q^m}^*$, we have 
\begin{eqnarray}\label{add}
\begin{array}{ccl}
\psi \Big(\mathrm{Tr}_{\mathbb F_{q^m}/\mathbb F_q}(\sum_{j=1}^N \bar a_j \bar u_1^{w_{1j}}\cdots \bar u_n^{w_{nj}})\Big)
&=&\prod_{j=1}^N \psi_m (\bar a_j \bar u_1^{w_{1j}}\cdots \bar u_n^{w_{nj}})  \\
&= &\prod_{j=1}^N\exp(\pi z-\pi z^{q^m})|_{z=a_ju_1^{w_{1j}}\cdots u_n^{w_{nj}}}.
\end{array}
\end{eqnarray}

Let $\chi: \mathbb F_q^*\to \overline{\mathbb Q}_p^*$ be the Techm\"uller character, that is, $\chi(\bar u)=u$ is the Techm\"uller lifting of 
$\bar u\in\mathbb F_q$.  It is a generator for the group of multiplicative characters on $\mathbb F_q$. Any multiplicative character $\mathbb F_q^*\to 
\overline{\mathbb Q}_p^*$ is of the form $\chi_{\gamma}=\chi^{\gamma (1-q)}$ for some rational number $\gamma\in \frac{1}{1-q}\mathbb Z$.  Moreover, for any 
$\bar u\in \mathbb F_{q^m}$, we have
\begin{eqnarray}\label{multi}
\chi_\gamma (\mathrm{Norm}_{\mathbb F_q^m/\mathbb F_q}(\bar u))= (u^{1+q+\cdots+q^{m-1}})^{\gamma(1-q)}=u^{\gamma(1-q^m)},
\end{eqnarray}
Let $\gamma_1, \ldots, \gamma_n\in \frac{1}{1-q}\mathbb Z$. Set $\chi_i=\chi^{\gamma_i(1-q)}$ $(i=1,\ldots, n)$. 

Consider the twisted exponential sum 
$$S_m (F(\bar{\mathbf a},\mathbf t))=\sum_{\bar u_1,\ldots, \bar u_n\in \mathbb F_{q^m}^*}
\chi_1 (\mathrm{Norm}_{\mathbb F_q^m/\mathbb F_q}(\bar u_1))
\cdots\chi_n(\mathrm{Norm}_{\mathbb F_q^m/\mathbb F_q}(\bar u_n))\psi \Big(\mathrm{Tr}_{\mathbb F_{q^m}/\mathbb F_q}\Big(\sum_{j=1}^N \bar a_j \bar u_1^{w_{1j}}\cdots \bar u_n^{w_{nj}}\Big)\Big).$$ Write $\exp(\pi z-\pi z^{q^m})=\sum_{i=1}^\infty c_i z^i$. 
By the equations (\ref{add}) and (\ref{multi}), we have 
\begin{eqnarray*}
&&S_m (F(\bar{\mathbf a},\mathbf t))\\
&=&\sum_{u_i^{q^m-1}=1} u_1^{\gamma_1(1-q^m)}\cdots u_n^{\gamma_n(1-q^m)} 
\prod_{j=1}^N \exp(\pi z-\pi z^{q^m})|_{z=a_ju_1^{w_{1j}}\cdots u_n^{w_{nj}}}\\
&=& \sum_{u_i^{q^m-1}=1} u_1^{\gamma_1(1-q^m)}\cdots u_n^{\gamma_n(1-q^m)} 
\prod_{j=1}^N \Big(\sum_{i=1}^\infty c_i (a_ju_1^{w_{1j}}\cdots u_n^{w_{nj}})^i\Big)\\
&=& \sum_{u_i^{q^m-1}=1} \left(t_1^{\gamma_1(1-q^m)}\cdots t_n^{\gamma_n(1-q^m)} 
\prod_{j=1}^N \Big(\sum_{i=1}^\infty c_i (a_jt_1^{w_{1j}}\cdots t_n^{w_{nj}})^i\Big)\right)|_{t_i=u_i}\\
&=& \sum_{u_i^{q^m-1}=1} \left(t_1^{\gamma_1(1-q^m)}\cdots t_n^{\gamma_n(1-q^m)}\prod_{j=1}^N \exp\big(\pi a_jt_1^{w_{1j}}\cdots t_n^{w_{nj}}- 
\pi a_jt_1^{q^mw_{1j}}\cdots t_n^{q^mw_{nj}}\big)
\right)|_{t_i=u_i} \\
&=& \sum_{u_i^{q^m-1}=1} \left(t_1^{\gamma_1(1-q^m)}\cdots t_n^{\gamma_n(1-q^m)}\exp\big(\pi F(\mathbf a,\mathbf t)- \pi F(\mathbf a,\mathbf t^{q^m})\big)
\right)|_{t_i=u_i}.
\end{eqnarray*}
We thus have 
\begin{eqnarray}\label{addagain}
\qquad \qquad S_m (F(\bar{\mathbf a},\mathbf t))
=\sum_{u_i^{q^m-1}=1} \left(t_1^{\gamma_1(1-q^m)}\cdots t_n^{\gamma_n(1-q^m)}\exp\big(\pi F(\mathbf a,\mathbf t)- \pi F(\mathbf a,\mathbf t^{q^m})\big)
\right)|_{t_i=u_i}.
\end{eqnarray}

\subsection{} Let $K'$ be a finite extension of $K$ containing all $q$-th roots of unity. Set
\begin{eqnarray*}
L(s)_0&=&\{\sum_{\mathbf w\in\mathbb Z^n\cap\delta} a_{\mathbf w} t^{\mathbf w}:\; a_{\mathbf w}\in K', \; 
\vert a_{\mathbf w}\vert s^{\vert \mathbf w\vert} \hbox { are bounded}\}.
\end{eqnarray*}
We have $L^\dagger_0=\bigcup_{s>1}L(s)_0.$ 
Note that $L(s)_0$ $(s> 1)$ and $L^\dagger_0$ are rings. Each $L(s)_0$ is a Banach space with respect to the norm
$$\Vert \sum_{\mathbf w\in\mathbb Z^n\cap\delta} a_{\mathbf w} t^{\mathbf w}\Vert=\sup_{\mathbf w\in\mathbb Z^n\cap \delta} 
\vert a_{\mathbf w}\vert s^{\vert \mathbf w\vert}.$$  

\begin{theorem}[Dwork trace formula] \label{dwork1} The operator 
$G_{\mathbf a}:\;  L^\dagger_0\to  L^\dagger_0$ is nuclear, and we have 
\begin{eqnarray*}
(q^m-1)^n \mathrm{Tr}(G_{\mathbf a}^m, { L^\dagger_0})= 
\sum_{u_i^{q^m-1}=1} \left(t_1^{\gamma_1(1-q^m)}\cdots t_n^{\gamma_n(1-q^m)}\exp\big(\pi F(\mathbf a,\mathbf t)- \pi F(\mathbf a,\mathbf t^{q^m})\big)
\right)|_{t_i=u_i}.
\end{eqnarray*}
\end{theorem}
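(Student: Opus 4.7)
The plan is to reduce the statement to the classical Dwork trace formula, by iterating $G_{\mathbf a}$ into the form $\Psi_{\mathbf a}^m\circ M_{H_m}$ for an explicit $H_m\in L_0^\dagger$, reading off the trace from the monomial diagonal, and matching to the exponential-sum side via finite Fourier orthogonality. Writing $G_{\mathbf a}=\Psi_{\mathbf a}\circ M_H$ with $H=t_1^{\gamma_1(1-q)}\cdots t_n^{\gamma_n(1-q)}\exp(\pi F(\mathbf a,\mathbf t)-\pi F(\mathbf a,\mathbf t^q))$, the commutation $M_{H'}\circ\Psi_{\mathbf a}=\Psi_{\mathbf a}\circ M_{H'(\mathbf t^q)}$, immediate from $\Psi_{\mathbf a}(\mathbf t^{q\mathbf v})=\mathbf t^{\mathbf v}$, lets me push all the $\Psi_{\mathbf a}$'s to the left to obtain $G_{\mathbf a}^m=\Psi_{\mathbf a}^m\circ M_{H_m}$ with $H_m=\prod_{j=0}^{m-1}H(\mathbf t^{q^j})$. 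The exponentials telescope, and the geometric sum $1+q+\cdots+q^{m-1}=(q^m-1)/(q-1)$ collapses the monomial prefactor to
$$H_m = t_1^{\gamma_1(1-q^m)}\cdots t_n^{\gamma_n(1-q^m)}\exp\bigl(\pi F(\mathbf a,\mathbf t)-\pi F(\mathbf a,\mathbf t^{q^m})\bigr),$$
which lies in $L_0^\dagger$ by the $q^m$-analogue of Lemma \ref{estimation}(i).

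For nuclearity I would pick $s_0>1$ with $H\in L(s_0)_0$ and expand $H=\sum_{\mathbf v}h_{\mathbf v}\mathbf t^{\mathbf v}$, so that $G_{\mathbf a}(\mathbf t^{\mathbf u})=\Psi_{\mathbf a}(H\mathbf t^{\mathbf u})=\sum_{\mathbf w}h_{q\mathbf w-\mathbf u}\mathbf t^{\mathbf w}$. Subadditivity gives $qd(\mathbf w)=d(q\mathbf w)\leq d(\mathbf u)+d(q\mathbf w-\mathbf u)$, and combined with $|h_{\mathbf v}|\leq Cs_0^{-d(\mathbf v)}$ this yields $|h_{q\mathbf w-\mathbf u}|\leq Cs_0^{d(\mathbf u)-qd(\mathbf w)}$. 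In the orthonormal basis $\{s_0^{-d(\mathbf w)}\mathbf t^{\mathbf w}\}$ of $L(s_0)_0$, the matrix entries of $G_{\mathbf a}$ are then bounded by $Cs_0^{(1-q)d(\mathbf w)}$, which tends to $0$ as $d(\mathbf w)\to\infty$ uniformly in $\mathbf u$. Hence $G_{\mathbf a}$ is completely continuous on $L(s_0)_0$ in Serre's sense, and therefore nuclear with a well-defined trace on the inductive limit $L_0^\dagger$.

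For the trace computation, write $H_m=\sum h_{m,\mathbf v}\mathbf t^{\mathbf v}$. The diagonal entry of $\Psi_{\mathbf a}^m\circ M_{H_m}$ at $\mathbf t^{\mathbf u}$ is the coefficient of $\mathbf t^{\mathbf u}$ in $\Psi_{\mathbf a}^m(H_m\mathbf t^{\mathbf u})=\sum_{\mathbf w}h_{m,q^m\mathbf w-\mathbf u}\mathbf t^{\mathbf w}$, namely $h_{m,(q^m-1)\mathbf u}$. Summing over $\mathbf u\in\mathbb Z^n\cap\delta$ and using $h_{m,\mathbf v}=0$ for $\mathbf v\notin\delta$ (since $H_m\in L_0^\dagger$),
$$\mathrm{Tr}(G_{\mathbf a}^m,L_0^\dagger)=\sum_{\mathbf u\in\mathbb Z^n\cap\delta}h_{m,(q^m-1)\mathbf u}=\sum_{\mathbf v\in(q^m-1)\mathbb Z^n}h_{m,\mathbf v}.$$
Applying the orthogonality $\sum_{u^{q^m-1}=1}u^v=(q^m-1)\,\mathbf{1}[(q^m-1)\mid v]$ componentwise then gives
$$\sum_{u_i^{q^m-1}=1}H_m(\mathbf u)=(q^m-1)^n\sum_{\mathbf v\in(q^m-1)\mathbb Z^n}h_{m,\mathbf v}=(q^m-1)^n\,\mathrm{Tr}(G_{\mathbf a}^m,L_0^\dagger),$$
which is exactly the claimed identity.

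The main obstacle is the nuclearity step: the radius $s_0$ must fit inside the narrow range permitted by Lemma \ref{estimation}(i), and one has to check that the resulting trace is independent of $s_0$ so that passage to the inductive limit $L_0^\dagger$ is unambiguous. Once nuclearity is secured, the interchange of summation in the diagonal-entry computation is legitimate and all remaining manipulations are formal.
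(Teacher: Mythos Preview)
Your approach is essentially the paper's: write $G_{\mathbf a}^m=\Psi_{\mathbf a}^m\circ M_{H_m}$ via the commutation $M_{H'}\Psi_{\mathbf a}=\Psi_{\mathbf a}M_{H'(\mathbf t^q)}$, telescope to the explicit $H_m$, read the trace as $\sum_{\mathbf u}h_{m,(q^m-1)\mathbf u}$, and match with the sum over $(q^m-1)$-st roots of unity by orthogonality. The one technical point to sharpen is the Banach space used in the nuclearity step. The paper works not on $L(s_0)_0$ but on $\tilde L(s)_0=\{\sum a_{\mathbf w}\mathbf t^{\mathbf w}:\;|a_{\mathbf w}|s^{d(\mathbf w)}\to 0\}$, because only this $c_0$-type space has $\{\mathbf t^{\mathbf w}\}$ as an orthogonal Schauder basis; on the $\ell^\infty$-type space $L(s_0)_0$ the monomials are not a basis, so speaking of an ``orthonormal basis $\{s_0^{-d(\mathbf w)}\mathbf t^{\mathbf w}\}$'' there is not quite legitimate (and $s_0^{-d(\mathbf w)}$ need not lie in $K'$ anyway). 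Your matrix-entry estimate $|h_{q\mathbf w-\mathbf u}|\leq Cs_0^{d(\mathbf u)-qd(\mathbf w)}$ is exactly the right input, but it should be read as giving complete continuity of $G_{\mathbf a}$ on $\tilde L(s)_0$ for $1<s<p^{(p-1)/p}$; the paper obtains the same conclusion by factoring $G_{\mathbf a}$ through the completely continuous inclusion $L(s')_0\hookrightarrow\tilde L(s)_0$. For the passage to $L_0^\dagger$ that you flag, the paper checks that $\det(I-TG_{\mathbf a},\tilde L(s)_0)$ is independent of $s$ (directly from the diagonal formula) and then uses the Riesz decomposition $\tilde L(s)_0=N(s)_f\oplus W(s)_f$ for each irreducible $f$ to conclude that all the finite-dimensional spectral pieces $N(s)_f$ coincide across $s$, whence nuclearity and the trace are well defined on the union.
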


\begin{proof} For any real number $s>1$, define
$$\tilde L(s)_0=\{\sum_{\mathbf w\in\mathbb Z^n\cap\delta} a_{\mathbf w} t^{\mathbf w}:\; a_{\mathbf w}\in K', \; 
\lim_{\vert \mathbf w\vert\to \infty} \vert a_{\mathbf w}\vert s^{\vert \mathbf w\vert} =0\}.$$
For any $s<s'$, we have $$L(s')_0\subset \tilde L(s)_0\subset L(s)_0,$$  
and $L^\dagger_0=\bigcup_{s>1}\tilde L(s)_0$. 
Endow $\tilde L(s)_0$ with the norm
$$\Vert \sum_{\mathbf w\in\mathbb Z^n\cap\delta} a_{\mathbf w}t^{\mathbf w}\Vert=\sup_{\mathbf w\in\mathbb Z^n\cap \delta} 
\vert a_{\mathbf w}\vert s^{\vert \mathbf w\vert}.$$ 
Then $\tilde L(s)_0$ is a Banach space. 
The inclusion $L(s')_0\hookrightarrow \tilde L(s)_0$ is completely continuous. Indeed, choose $s<s''<s'$. We can factorize this inclusion as the composite 
$$L(s')_0\hookrightarrow \tilde L(s'')_0\hookrightarrow \tilde L(s)_0.$$ It suffices to verify the inclusion $i: \tilde L(s'')_0\hookrightarrow\tilde L(s)_0$ is completely continuous.
Let $L_S$ be the finite dimensional $K'$-vector space spanned by a finite subset $S$ of   
$\{t^{\mathbf w}\}_{\mathbf w\in\mathbb Z^n\cap \delta}$, and let $$i_S: \tilde L(s'')_0\to\tilde L(s)_0$$ be the composite of the projection 
$\tilde L(s'')_0\to L_S$ and the inclusion $L_S\hookrightarrow\tilde L(s)_0$. One can verify that 
$$\Vert i_S -i\Vert\leq \sup_{\mathbf w\not\in S}\Big(\frac{s}{s''}\Big)^{\vert \mathbf w\vert}.$$
So $i_S$ converges to 
$i$ as $S$ goes over all finite subsets of  $\{t^{\mathbf w}\}_{\mathbf w\in\mathbb Z^n\cap \delta}$. Moreover $i_S$ has finite 
ranks. So $i$ is completely continuous. 

Let 
$H(\mathbf t) =\mathbf t^{(1-q)\gamma}\exp\big(\pi F(\mathbf a,\mathbf t)- \pi F(\mathbf a,\mathbf t^{q})\big).$
By Proposition \ref{estimation}, we have $H_q(\mathbf t)\in L(p^{\frac{p-1}{pqw}})_0$. For any $s> 1$, we have 
$\Psi_{\mathbf a}( L(s)_0)\subset  L(s^q)_0$.
Consider the operator
\begin{eqnarray*}
G_{\mathbf a}= \big( \mathbf t^\gamma \exp(\pi F(\mathbf a,\mathbf t))\big)^{-1} \circ \Psi_{\mathbf a}\circ 
\big(\mathbf t^\gamma\exp(\pi F(\mathbf a,\mathbf t))\big) =\Psi_{\mathbf a}\circ H(\mathbf t).
\end{eqnarray*}
If $1< s< p^{\frac{p-1}{pw}}$, then $G_{\mathbf a}$ induces a map $G_{\mathbf a}:\;  \tilde L(s)_0\to  
\tilde L(s)_0$. Indeed it is the composite
$$ \tilde L(s)_0\hookrightarrow  
L(s)_0\stackrel{H(\mathbf t)}\to   L\Big(\min\Big(s,p^{\frac{p-1}{pqw}}\Big)\Big)_0\stackrel {\Psi_{\mathbf a}}\to 
  L\Big(\min\Big(s^q,p^{\frac{p-1}{pw}}\Big)\Big)_0\hookrightarrow   \tilde L(s)_0.$$ 
$G_{\mathbf a}:\;  \tilde L(s)_0\to  \tilde L(s)_0$ 
is completely continuous since the last inclusion in the above composite is completely continuous. In particular, it is nuclear (\cite[Theorem 6.9]{M1}). 
Write $$\mathbf t^{(1-q)\gamma}
\exp\big(\pi F(\mathbf a,\mathbf t)- \pi F(\mathbf a,\mathbf t^{q})\big)=\sum_{\mathbf w\in\mathbb Z^n\cap\delta} c_{\mathbf w} t^{\mathbf w}.$$ 
For any $\mathbf u\in \mathbb Z^n\cap\delta$, we have 
\begin{eqnarray*}
G_{\mathbf a}(\mathbf t^{\mathbf u})=
\Psi_{\mathbf a}( \sum_{\mathbf w\in \mathbb Z^n\cap\delta} c_{\mathbf w}t^{\mathbf w+\mathbf u})=
\Psi_{\mathbf a}( \sum_{\mathbf w\in \mathbf u+\mathbb Z^n\cap\delta} c_{\mathbf w-\mathbf u}t^{\mathbf w})
=\sum_{\mathbf w\in \mathbb Z^n\cap\delta,\; q\mathbf w\in \mathbf u+\mathbb Z^n\cap\delta} 
c_{q\mathbf w-\mathbf u}\mathbf t^{\mathbf w}.
\end{eqnarray*} 
The coefficient of $ \mathbf t^{\mathbf u}$ 
on the righthand side is $c_{(q-1)\mathbf u}$. Taking the sum of diagonal entries of the matrix of $G_{\mathbf a}$ on $\tilde L(s)_0$ 
with respect to $\{\mathbf t^{\mathbf w}\}_{\mathbf w\in\mathbb Z^n\cap\delta}$, we get
\begin{eqnarray*}
\mathrm{Tr}(G_{\mathbf a},  {\tilde L(s)_0})&=&\sum_{\mathbf u\in \mathbb Z^n\cap \delta}c_{(q-1)\mathbf u}.
\end{eqnarray*}
In particular, $\mathrm{Tr}(G_{\mathbf a},  {\tilde L(s)_0})$ is independent of $s$. Similarly,
$\mathrm{Tr}(G^m_{\mathbf a},  {\tilde L(s)_0})$ and $$\mathrm{det}(I-TG_{\mathbf a},  {\tilde L(s)_0})
=\exp\Big(-\sum_{m=1}^\infty
\frac {\mathrm{Tr}(G^m_{\mathbf a},  {\tilde L(s)_0})}{m}T^m\Big)$$ are independent 
of $s$. For any monic irreducible polynomial $f(T)\in K'[T]$ with nonzero constant term, write (\cite[Theorem 6.9]{M1})
$$ \tilde L(s)_0=N(s)_f\bigoplus W(s)_f,$$
where $N(s)_f$ and $W(s)_f$ are $G_{\mathbf a}$-invariant spaces, $N(s)_f$ is finite dimensional over $K'$, $f(G_{\mathbf a})$ is nilpotent on 
$N(s)_f$ and bijective on $W(s)_f$. 
We have 
$$N(s)_f=\bigcup_{m=1}^\infty \mathrm{ker}\, (f(G_{\mathbf a}))^m,\quad 
W(s)_f=\bigcap_{m=1}^\infty \mathrm{im}\, (f(G_{\mathbf a}))^m.$$
For any pair $s<s'$, we have 
$$\tilde L(s')_0\subset \tilde L(s)_0,\quad N(s')_f\subset N(s)_f,\quad W(s')_f\subset W(s)_f.$$ 
Let $N_f=\bigcup_{1< s< p^{\frac{p-1}{pw}}} N(s)_f$ and $W_f=\bigcup_{1< s< p^{\frac{p-1}{pw}}}W(s)_f$. Then
$$ L_0^\dagger=N_f\bigoplus W_f,$$
$N_f$ and $W_f$ are $G_{\mathbf a}$-invariant, $f(G_{\mathbf a})$ is nilpotent on $N_f$ and bijective on $W_f$. Since 
$\mathrm{det}(I-TG_{\mathbf a}, \tilde L(s)_0)$ is independent of $s$, all $N(s)_f$ have the same dimension, and hence 
we have $N_f=N(s)_f$ for all $1< s< p^{\frac{p-1}{pw}}$. 
This shows that $G_{\mathbf a}:  L^\dagger_0\to   L^\dagger_0$ is nuclear and 
$$\mathrm{Tr}(G_{\mathbf a},  {L^\dagger_0})=\sum_{\mathbf u\in \mathbb Z^n\cap \delta}c_{(q-1)\mathbf u}.$$
On the other hand, we have
$$\sum_{u^{q-1}=1} u^{w}=\left\{\begin{array}{cl}
q-1 &\hbox{if } {q-1}|w,\\
0&\hbox{otherwise}. 
\end{array}
\right.$$
So we have 
\begin{eqnarray*}
&& \sum_{u_i^{q-1}=1} \Big (t_1^{\gamma_1(1-q)}\cdots t_n^{\gamma_n(1-q)}\exp\big(\pi F(\mathbf a,\mathbf t)- \pi F(\mathbf a,\mathbf t^{q})\big)
\Big)|_{t_i=u_i}\\
&=&\sum_{\mathbf w\in\mathbb Z^n\cap\delta} \sum_{u_i^{q-1}=1}  c_{\mathbf w} u_1^{w_1}\cdots u_n^{w_n}\\
&=&(q-1)^n \sum_{\mathbf u\in \mathbb Z^n\cap \delta}c_{(q-1)\mathbf u}.
\end{eqnarray*}
We thus get $$(q-1)^n \mathrm{Tr}(G_{\mathbf a},  L^\dagger_0)= 
\sum_{u_i^{q-1}=1} \left(t_1^{\gamma_1(1-q)}\cdots t_n^{\gamma_n(1-q)}\exp\big(\pi F(\mathbf a,\mathbf t)- \pi F(\mathbf a,\mathbf t^{q})\big)
\right)|_{t_i=u_i}.$$ This proves the 
theorem for $m=1$. 
We have 
\begin{eqnarray*}
G_{\mathbf a}^m
&=&\Big( \exp(\pi F(\mathbf a,\mathbf t))\Big)^{-1} \circ \Psi_{\mathbf a}^m\circ 
\Big( \exp(\pi F(\mathbf a,\mathbf t))\Big)\\
&=&\Psi_{\mathbf a}^m\circ\exp\big(\pi F(\mathbf a, \mathbf t)- \pi F(\mathbf a,\mathbf t^{q^m})\big).
\end{eqnarray*}
So the assertion for general $m$ follows from the case $m=1$. 
\end{proof}

\subsection{Proof of Theorem \ref{arithmetic}} By the equation (\ref{addagain}) and
the Dwork trace formula \ref{dwork1}, we have 
\begin{eqnarray*}
S_m(F(\bar{\mathbf a},\mathbf t))&=&(q^m-1)^n \mathrm{Tr}(G_{\mathbf a}^m,  {L^\dagger_0})\\
&=& \sum_{k=0}^n {n\choose k} (-1)^k (q^m)^{n-k} \mathrm{Tr}(G_{\mathbf a}^m,  L^\dagger_0)\\
&=& \sum_{k=0}^n (-1)^k \mathrm{Tr}\Big((q^{n-k}G_{\mathbf a})^m,  L_0^{\dagger{n\choose k}} \Big). 
\end{eqnarray*}
For the $L$-function, we have 
\begin{eqnarray*}
L(F(\bar{\mathbf a},\mathbf t),T)&=&\exp\Big(\sum_{m=1}^\infty S_m(F(\bar{\mathbf a},\mathbf t))\frac{T^m}{m}\Big)\\
&=&\exp\Big(\sum_{m=1}^\infty \sum_{k=0}^n (-1)^k\mathrm{Tr}\Big((q^{n-k}G_{\mathbf a})^m,  L_0^{\dagger{n\choose k}}\Big)\frac{T^m}{m}\Big)\\
&=&\prod_{k=0}^n \exp\Big((-1)^k\sum_{m=1}^\infty \mathrm{Tr}\Big((q^{n-k}G_{\mathbf a})^m, L_0^{\dagger{n\choose k}}\Big)\frac{T^m}{m}\Big)\\
&=& \prod_{k=0}^n \mathrm{det}\Big(I-T q^{n-k}G_{\mathbf a}, L_0^{\dagger{n\choose k}}\Big)^{(-1)^{k+1}}
\end{eqnarray*}
This proves Theorem \ref{arithmetic}. 

\section{Over-holonomic $D^\dagger$-Modules and $F$-isocrystals}

\subsection{} 
In this section, we take $\delta$ to be the cone generated by $\mathbf w_1, \ldots, \mathbf w_N$.
We first compare our twisted logarithmic de Rham complex with the rigid cohomology of a certain isocrystal.
As in the proof of Lemma \ref{LL}, we decompose $\Delta$ into a finite union $\bigcup_\tau \tau$
so that each $\tau$ is a  
simplex of dimension $n$ with vertices $\{0,\mathbf w_{i_1},\ldots, 
\mathbf w_{i_n}\}$ for some subset $\{i_1,\ldots, i_n\}\subset \{1,\ldots, N\}$. For each $\tau$, let $\delta(\tau)$ be the cone 
generated by $\tau$, and let 
\begin{eqnarray*}
B(\tau)&=&\mathbb Z^n\cap \{c_1\mathbf w_{i_1}+\cdots +c_n\mathbf w_{i_n}:\; 0\leq c_i< 1\},\\
C(\tau)&=& \{k_1\mathbf w_{i_1}+\cdots +k_n\mathbf w_{i_n}:\; k_i\in\mathbb Z_{\geq 0}\}.
\end{eqnarray*}
$B(\tau)$ is finite. Every element $\mathbf w\in \mathbb Z^n\cap \delta(\tau)$ can be written uniquely as 
$\mathbf w=b(\mathbf w)+c(\mathbf w)$ with $b(\mathbf w)\in B(\tau)$ and $c(\mathbf w)\in C(\tau)$.
We have $$\mathbb Z^n\cap \delta=\bigcup_{\tau} (\mathbb Z^n\cap\delta(\tau))=\bigcup_{\tau} \bigcup_{\mathbf w\in B(\tau)}(\mathbf w+C(\tau)).$$
The finite set $B=\{\mathbf w_1, \ldots, \mathbf w_N\}\bigcup\Big (\bigcup_\tau B(\tau)\Big)$ 
generates the monoid $\mathbb Z^n\cap\delta$. 
Denote elements in $B$ by $\mathbf b_1, \ldots, \mathbf b_M$ with $\mathbf b_i=\mathbf w_i$ for $1\leq i\leq N$. 
Let $\Sigma$ be the fan of all faces of dual cone $\check{\delta}$ of $\delta$, and let
$$X_R=X_R(\Sigma)=\text{Spec}\, R[\mathbb Z^n \cap \delta]$$ be the affine toric $R$-scheme defined by $\Sigma$.
We have a closed immersion 
$X_R\to\mathbb A^M_R$ defined by the $R$-algebra homomorphism 
$$R[y_1, \ldots, y_M]\to R[\mathbb Z^n\cap \delta],\quad y_i\mapsto \mathbf t^{\mathbf b_i}.$$ 
Let $\overline{X}_R$ be the closure of $X_R$ in $\mathbb P^M_R$, let $\widehat{\overline{X}}_R$ be the formal completion of $\overline{X}_R$
with respect to the adic topology defined by the maximal ideal of $R$, and let $X_\kappa=X_R\otimes_{R} \kappa$ (resp. $\overline{X}_\kappa
=\overline{X}_R\otimes_{R} \kappa$) be the special fiber of $X_R$ (resp. $\overline{X}_R$). 
The analytification $\overline{X}_K^{an}$ of the generic fiber of $\overline X_R$ coincides with the rigid analytic space associate 
to the formal scheme $\widehat{\overline{X}}_R$. We have a specialization map 
$\mathrm{sp}: \overline X_K^{an}\to \widehat{\overline{X}}_R$. The tube $]X_\kappa[$ is defined to be 
$]X_\kappa[=\mathrm{sp}^{-1}(X_\kappa)$. We have a commutative diagram
$$ \xymatrix{
 ]X_\kappa[\ar[r]\ar[d] & \overline{X}_K^{an}\ar[r]^{=} \ar[d] & \overline{X}_K^{an}\ar[d]^{\text{sp}} \\
X_\kappa \ar[r] & \overline{X}_\kappa \ar[r]  & \widehat{\overline{X}}_R.}$$
The analytification $X_K^{an}$ of the generic fiber of $X_R$ is a strict neighborhood of $]X_\kappa[$ in $\overline{X}_K^{an}$. 
Denote by $j: ]X_\kappa[ \to X_K^{an}$ the inclusion.

\begin{lemma}\label{global}
We have an isomorphism $\Gamma(X_K^{an},  j^\dagger \mathcal O_{X_K^{an}}) \xrightarrow{\sim} L_0^\dagger$.
\end{lemma}

\begin{proof} By \cite[1.2.4 (iii)]{B3}, 
$]X_\kappa[$ is the intersection of $X_K^{an}$ with the closed unit polydisc $D(0, 1^+)^M$ in $\mathbb A_K^{M, an}$, 
and the intersections $V_\lambda$ of $X_K^{an}$ with the closed polydiscs $D(0, \lambda^{+})^M$ of 
radii $\lambda$ form a fundamental system of strict neighborhoods 
when $\lambda \to 1^+$. We have
$$\Gamma(X_K^{an},  j^\dagger \mathcal O_{X_K^{an}})=\underset{\longrightarrow}{\text{lim}}_{\lambda\to1^+}\Gamma(V_\lambda, \mathcal O_{V_\lambda})
=K\langle y_1,\ldots,y_M\rangle^\dagger/IK\langle y_1,\ldots,y_M\rangle^\dagger,$$
where $I$ is the ideal of $K[y_1, \ldots, y_M]$ defining the closed subscheme $X_K$ of $\mathbb A_K^M$. 
Let 
\begin{eqnarray*}
A&=&\{\lambda=(\lambda_1, \ldots, \lambda_M)\in\mathbb Z^M:\, \sum_{i=1}^M \lambda_i \mathbf b_i=0\},\\
\tilde\Box_\lambda&=&\prod_{\lambda_i>0}y_i^{\lambda_i}-\prod_{\lambda_i<0} y_i^{\lambda_i} \quad (\lambda\in A), 
\end{eqnarray*} 
and let $J$ be the ideal of $K[y_1, \ldots, y_M]$ generated by $\tilde\Box_\lambda$ $(\lambda\in A)$. 
We have $J\subset I$. Choose finitely many $\mu_1,\ldots,\mu_m\in A$ so that 
for any $\lambda\in A$, we have $\tilde\Box_\lambda=a_1\tilde \Box_{\mu_1}+\cdots+a_m \tilde\Box_{\mu_m}$, where $a_i$ 
are polynomials with integer coefficients, and the total degrees of $a_i$ do not exceed that of $\tilde \Box_\lambda$. 
Such a choice is possible by Lemma \ref{annoy}. 
Consider the homomorphism 
$$\phi: K\langle y_1,\ldots,y_M\rangle^\dagger\to L_0^\dagger,\quad  y_i\mapsto \mathbf t^{\mathbf b_i}.$$
It is well-defined. In fact, 
given $\sum_{\mathbf v}a_{\mathbf v}\mathbf y^{\mathbf v}\in K\langle y_1,\ldots,y_M\rangle^\dagger$ such that $\vert a_{\mathbf v}\vert s^{|\mathbf v|}$ are bounded
for some $s>1$, 
$\sum_{\mathbf v}a_{\mathbf v}\mathbf y^{\mathbf v}$ is mapped to $\sum_{\mathbf u}(\sum_{\sum_i v_i\mathbf b_i =\mathbf u} a_{\mathbf v})\mathbf t^{\mathbf u}.$
Let $c=\max_{1\leq i\leq M}\vert \mathbf b_i\vert.$  We have $\vert \mathbf u\vert \le c\vert \mathbf v\vert$ whenever $\mathbf u=\sum v_i \mathbf b_i$. Thus
$$\Big\vert\sum_{\sum_i  v_i\mathbf b_i=\mathbf u}a_{\mathbf v}\Big\vert s^{c^{-1}\vert\mathbf u\vert}\le \max_{\mathbf v}\vert a_{\mathbf v}\vert s^{\vert\mathbf v\vert}.$$
Suppose $\sum_{\mathbf v}a_{\mathbf v}\mathbf y^{\mathbf v}$ lies in $\mathrm{ker}\,\phi$. 
For any $\mathbf u$, let $S_{\mathbf u}=\{\mathbf v\in \mathbb Z_{\geq 0}^M :  \sum_i  v_i \mathbf b_i=\mathbf u\}$. Then we have
$\sum_{\mathbf v\in \mathbf S_{\mathbf u}} a_{\mathbf v}=0$ for any $\mathbf u$. For any nonempty $S_{\mathbf u}$, 
choose $\mathbf v^{(0)}=(v^{(0)}_1,\ldots,v^{(0)}_M)\in S_{\mathbf u}$ such that $\vert\mathbf v^{(0)}\vert$ is minimal in $S_{\mathbf u}$. 
Then $\mathbf v-\mathbf v^{(0)}\in A$. We have
$$\mathbf y^{\mathbf v}-\mathbf y^{\mathbf v^{(0)}}=\mathbf y^{\min(\mathbf v,\mathbf v^{(0)})}
\Big(\prod_{v_j>v_j^{(0)}} y_j^{v_j-v_j^{(0)}}-\prod_{v_j<v_j^{(0)}} y_j^{v_j^{(0)}-v_j}\Big)=\mathbf y^{\min(\mathbf v,\mathbf v^{(0)})}\tilde \Box_{\mathbf v-\mathbf v^{(0)}}\in J.$$
As in the proof of Proposition \ref{grobner}, we can write
$$\mathbf y^{\mathbf v}-\mathbf y^{\mathbf v^{(0)}}=\Big(\sum_{\mathbf w}a_{\mathbf{vw}}^{(1)}\mathbf y^{\mathbf w}\Big)\tilde \Box_{\mu_1}+\cdots
+\Big(\sum_{\mathbf w}a_{\mathbf{vw}}^{(s)}\mathbf y^{\mathbf w}\Big)\tilde \Box_{\mu_m}$$
for some $a^{(k)}_{\mathbf{vw}}\in \mathbb Z$ such that $a^{(k)}_{\mathbf{vw}}=0$ if $|\mathbf w|>|\mathbf v|$. We have
\begin{eqnarray*}
\sum_{\mathbf v}a_{\mathbf v}\mathbf y^{\mathbf v}&=&\sum_{\mathbf u\in \mathbb Z^n\cap\delta}\sum_{\mathbf v\in S_{\mathbf u}}a_{\mathbf v}\mathbf y^{\mathbf v}\\
&=&\sum_{\mathbf u\in \mathbb Z^n\cap\delta}\sum_{\mathbf v\in S_{\mathbf u}}a_{\mathbf v}(\mathbf y^{\mathbf v}-\mathbf y^{\mathbf v^{(0)}})\\
&=&\sum_{k=1}^m\big(\sum_{\mathbf w}\sum_{\mathbf v}a_{\mathbf{vw}}^{(k)}a_{\mathbf v}\mathbf y^{\mathbf w}\big)\tilde \Box_{\mu_k}
\end{eqnarray*}
As $a_{\mathbf{vw}}^{(k)}\in \mathbb Z$ and $\vert a_{\mathbf v}\vert s^{|\mathbf v|}$ are bounded for some $s>1$, 
$\sum_{\mathbf v}a_{\mathbf{vw}}^{(k)}a_{\mathbf v}$ converges in $R$. 
We have
\begin{eqnarray*}
\Big\vert \sum_{\mathbf v}a_{\mathbf{vw}}^{(k)}a_{\mathbf v}\Big\vert s^{|\mathbf w|}&\le& 
\max_{\mathbf v,\; |\mathbf w|\le |\mathbf v|}\vert a_{\mathbf{vw}}^{(k)}a_{\mathbf v}\vert s^{|\mathbf w|}\\
&\le&\max_{\mathbf v}\vert a_{\mathbf v} \vert s^{|\mathbf v|}.
\end{eqnarray*}
So 
$\sum_{\mathbf w}\sum_{\mathbf v}a_{\mathbf{vw}}^{(k)}a_{\mathbf v}\mathbf y^{\mathbf w} \in K\langle y_1,\ldots,y_M\rangle^\dagger,$ 
and $\sum_{\mathbf v}a_{\mathbf v}\mathbf y^{\mathbf v}\in JK\langle y_1,\ldots,y_M\rangle^\dagger $.
Hence $$\mathrm{ker}\,\phi\subset JK\langle y_1,\ldots,y_M\rangle^\dagger \subset IK\langle y_1,\ldots,y_M\rangle^\dagger.$$ 
It is clear that  $IK\langle y_1,\ldots,y_M\rangle^\dagger\subset 
\mathrm{ker}\phi$. We thus have $\mathrm{ker}\,\phi=IK\langle y_1,\ldots,y_M\rangle^\dagger.$ 

Let's prove $\phi$ is surjective. It suffices to show $\sum_{\mathbf w\in \mathbf b_i +C(\tau)} a_{\mathbf w}\mathbf t^{\mathbf w}\in L_0^\dagger$
lies in the image of $\phi$, where $\tau$ is a simplex with vertices $\{0, w_{i_1},\ldots, w_{i_n}\}$ and $\mathbf b_i \in B(\tau)$. A preimage for 
$\sum_{\mathbf w\in \mathbf b_i +C(\tau)} a_{\mathbf w}(\mathbf x)\mathbf t^{\mathbf w}$ is 
\begin{eqnarray*}
\sum_{v_1, \ldots, v_n\geq 0} a_{\mathbf b_i+v_1 \mathbf w_{i_1}+\cdots +v_n\mathbf w_{i_n}} 
y_i y_{i_1}^{v_{1}}\cdots y_{i_n}^{v_{n}}.
\end{eqnarray*} Let's prove it lies in $K\langle \mathbf y\rangle^\dagger$. 
Suppose $\vert a_{\mathbf w}\vert s^{\vert \mathbf w\vert}$ are bounded for some $s>1$. Since 
$\mathbf w_{i_1},\ldots, \mathbf w_{i_n}$ are linearly independent,  
there exist $c_1,  c_2>0$ such that $$c_1|\mathbf v|\le |v_1\mathbf w_{i_1}+\cdots+v_n\mathbf w_{i_n}|\le c_2|\mathbf v|$$
for any $\mathbf v=(v_1,  \ldots, v_n)$. 
We have 
\begin{eqnarray*}
&&\vert a_{\mathbf b_i+v_1 \mathbf w_{i_1}+\cdots +v_n\mathbf w_{i_n}} \vert \cdot (s^{c_1}) ^{1+ v_1+\cdots+v_n}\\
&\leq& \vert a_{\mathbf b_i+v_1 \mathbf w_{i_1}+\cdots +v_n\mathbf w_{i_n}} \vert\cdot s^{c_1+ \vert v_1 \mathbf w_{i_1}+\cdots +v_n\mathbf w_{i_n}\vert}\\
&\leq& \vert a_{\mathbf b_i+v_1 \mathbf w_{i_1}+\cdots +v_n\mathbf w_{i_n}} \vert \cdot s^{\vert \mathbf b_i+v_1 \mathbf w_{i_1}+\cdots +v_n\mathbf w_{i_n}\vert}
s^{c_1+\vert -\mathbf b_i\vert}.
\end{eqnarray*}
Thus $\vert a_{\mathbf b_i+v_1 \mathbf w_{i_1}+\cdots +v_n\mathbf w_{i_n}} \vert\cdot (s ^{c_1})^{1+ v_1+\cdots+v_n}$ are bounded and 
$\sum_{v_1, \ldots, v_n\geq 0} a_{\mathbf b_i+v_1 \mathbf w_{i_1}+\cdots +v_n\mathbf w_{i_n}} 
y_i y_{i_1}^{v_{1}}\cdots y_{i_n}^{v_{n}}$ lies in $K\langle \mathbf y\rangle^\dagger$. 
Therefore $\phi$ induces an isomorphism $\Gamma(X_K^{an},  j^\dagger \mathcal O_{X_K^{an}}) \cong L_0^\dagger$.
\end{proof}

Let $\Sigma'$ be a regular refinement of $\Sigma$, $X'_R$ the toric $R$-scheme associate to $\Sigma'$,  and
$\overline X'_R$ a compactification of $X'_R$. Replacing $\overline X'_R$ by the scheme theoretic image of the immersion 
$X'_R\to \overline X_R\times_R \overline X'_R,$ we may assume that we have a commutative diagram 
$$\begin{array}{ccc}
X'_R&\to& \overline X'_R\\
\downarrow&&\downarrow \\
X_R&\to&\overline X_R.
\end{array}$$
It gives rise to a commutative diagram
$$\begin{array}{ccccc} 
X'_R&\to& X_R\times_{\overline{X}_R}\overline X'_R&\to& \overline X'_R\\
&\searrow&\downarrow&&\downarrow\\
&& X_R&\to& \overline X_R.
\end{array}$$
The morphism $X'_R\to X_R\times_{\overline{X}_R}\overline X'_R$ is an open immersion since 
$X'_R\to \overline X'_R$ and the projection $X_R\times_{\overline{X}_R}\overline X'_R\to \overline X'_R$ are open immersions. 
It is proper since $X'_R\to X_R$ is proper and the projection $X_R\times_{\overline{X}_R}\overline X'_R\to X_R$ is separated. Since $X'_R$ is dense in 
$\overline X'_R $, 
we have $$X'_R\cong X_R\times_{\overline{X}_R}\overline{X}'_R.$$ 
Consider the Cartesian squares
\begin{eqnarray}\label{diagram}
\begin{array}{ccccc} 
X'_\kappa&\to&\overline X'_\kappa&\to &\widehat{\overline X}'_R\\
\downarrow&&\downarrow&&\downarrow\\
X_\kappa&\to&\overline{X}_\kappa&\to &\widehat{\overline{X}}_R,
\end{array}
\end{eqnarray}
where $\overline X'_\kappa$ (resp. $X'_\kappa$) is the special fibre of $\overline X'_R$
(resp. $X'_R$), and $\widehat{\overline X}'_R$ is the formal completion of $\overline X'_R.$ 
Let $]X'_\kappa[$ be the tube of $X'_\kappa$ in the analytification ${\overline X}'^{an}_K$ of generic fibre $\overline X'_K$ of $\overline X'$,
let $j':]X'_\kappa[\to X'^{an}_K$ be the inclusion, and let $\phi: X_K^{\prime an}\to X_K^{an}$ be the canonical morphism. 
We have a commutative diagram
\begin{eqnarray}\label{dia}
\begin{array}{ccccc}
]X'_\kappa[&\stackrel{j'}\to &X'^{an}_K&\to &\overline X'^{an}_K\\
\downarrow&&\downarrow{\scriptstyle \phi}&&\downarrow\\
]X_\kappa[&\stackrel j\to& X^{an}_K&\to &\overline{X}^{an}_K.
\end{array}
\end{eqnarray}

\begin{lemma}\label{comparison}
We have 
\begin{eqnarray*}
R^k\phi_\ast {j'}^\dagger \mathcal O_{X'^{an}_K}&\cong& \left\{\begin{array}{cl}
j^\dagger \mathcal O_{X_K^{an}} &\hbox{if }k=0,\\
0&\hbox{if }k\geq 1,
\end{array}\right. \\
H^k(X_K^{an}, j^\dagger \mathcal O_{X_K^{an}})
&\cong&
\left\{\begin{array}{cl}
L_0^\dagger &\hbox{if }k=0,\\
0&\hbox{if }k\geq 1.
\end{array}\right.
\end{eqnarray*}
\end{lemma}

\begin{proof}
For any $\lambda>1$, let $V_\lambda$ be the intersection of $X_K^{an}$ with the closed polydisc $D(0, \lambda^+)^M$ in
 $\mathbb P_K^{M, an}$. Then $V_\lambda$ $(\lambda>1)$ form a fundamental system of affinoid strict neighborhoods of $]X_\kappa[$ in $X^{an}_K$
 and $j_\lambda: V_\lambda\to X^{an}_K$ are affine morphisms. Since 
squares in (\ref{diagram}) are Cartesian, 
 $V'_\lambda=\phi^{-1}(V_\lambda)$ $(\lambda>1)$ form a fundamental system of strict neighborhoods of $]X'_\kappa[$ in $X'^{an}_K$, and 
 $j'_\lambda: V'_\lambda\to X'^{an}_K$ are affine. Consider the  commutative diagram
$$\begin{array}{cclcc}
]X'_\kappa[&\to& V'_\lambda&\stackrel{j'_\lambda}\to& X'^{an}_K\\
\downarrow&&\downarrow{\scriptstyle \phi_\lambda}&&\downarrow{\scriptstyle \phi}\\
]X_\kappa[ &\to&  V_\lambda&\stackrel {j_\lambda}\to & X^{an}_K.
\end{array}$$
We have 
\begin{eqnarray}
R^k\phi_\ast(j^\dagger \mathcal O_{X_K^{'an}})&=&R^k\phi_\ast \varinjlim\nolimits_{\lambda\to 1^+}j'_{\lambda \ast}\mathcal O_{V'_\lambda}\nonumber\\
\label{1}&\cong& \varinjlim\nolimits_{\lambda\to 1^+}R^k\phi_\ast j'_{\lambda \ast}\mathcal O_{V'_\lambda} \\
&\cong& \varinjlim\nolimits_{\lambda\to 1^+}j_{\lambda \ast} R^k \phi_{\lambda \ast}O_{V'_\lambda}\label{2}\\
&\cong& \varinjlim\nolimits_{\lambda\to 1^+}j_{\lambda \ast} j_\lambda^\ast R^k \phi_\ast \mathcal O_{X'^{an}_K}\nonumber\\
&\cong&  \varinjlim\nolimits_{\lambda\to 1^+}j_{\lambda \ast} j_\lambda^\ast (R^k \phi_{K \ast} \mathcal O_{X'_K})^{an}\label{3}\\
&=& j^\dagger(R^k \phi_{K \ast} \mathcal O_{X'_K})^{an}\nonumber.
\end{eqnarray}
Here (\ref{1}) follows from that $\phi$ is quasi-compact and quasi-separated. (\ref{2}) follows from that $j'_\lambda$ and $j_\lambda$ are affine. 
(\ref{3}) follows from \cite[Proposition 3.4.9]{Berk}, where $\phi_K: X'_K\to X_K$ is the canonical morphism.  
By \cite[Theorem 9.2.5]{CLS}, we have $R^k\phi_{K \ast} \mathcal O_{X'_K}=0$ for $k\geq1$ and 
$\phi_{K \ast} \mathcal O_{X'_K}=\mathcal O_{X_K}.$ The first assertion of the lemma follows.
We have 
\begin{eqnarray*}
H^k(X_K^{an}, j^\dagger \mathcal O_{X_K^{an}})
&\cong& H^k(X_K^{an},  \varinjlim\nolimits_{\lambda\to 1^+}j_{\lambda \ast} \mathcal O_{V_\lambda})\\
&\cong& \varinjlim\nolimits_{\lambda\to 1^+} H^k(X_K^{an},  Rj_{\lambda \ast} \mathcal O_{V_\lambda})\\
&\cong&  \varinjlim\nolimits_{\lambda\to 1^+} H^k(V_\lambda, \mathcal O_{V_\lambda}).
\end{eqnarray*}
For $k\geq 1$, we thus get $H^k(X_K^{an}, j^\dagger \mathcal O_{X_K^{an}})=0$. The case $k=0$ 
is given by Lemma \ref{global}.
\end{proof}

Regard $K\langle y \rangle^\dagger$ as a free $K\langle y \rangle^\dagger$-module with 
the connection $$\nabla(1)=\pi \mathrm dy.$$ It defines the \emph{Dwork isocrystal}, which is 
$j_0^\dagger \mathcal O_{\mathbb A_K^{1, an}}$ considered as a free $j_0^\dagger \mathcal O_{\mathbb A_K^{1, an}}$-module 
with the connection $\nabla$, where $j_0: D(0, 1^+)^N\to \mathbb A_K^{1,an}$ is the inclusion. 
Since $F(\mathbf a, \mathbf t)$ lies in  $R[\mathbb Z^n\cap \delta]$, it defines a morphism $X_R=\mathrm{Spec}\, R[\mathbb Z^n\cap \delta]
\to \mathbb A^1_R$. Denote its composite with 
$X'_R\to X_R$ by $F_{\mathbf a}: X'_R\to \mathbb A^1_R$. The base changes of $F_{\mathbf a}$ from $R$ to $K$ and to $\kappa$ are
also denoted by $F_{\mathbf a}$. Let $\mathcal L_{\psi,F_{\mathbf a}}$ be the 
pull-back of $\mathcal L_{\psi}$ by the morphism $F_{\mathbf a}:X_K^{\prime an}\to \mathbb A_K^{1, an}$. 
Then $\mathcal L_{\psi,F_{\mathbf a}}$ is ${j'}^{\dagger} \mathcal O_{X_K^{' an}}$ considered as a free
${j'}^{\dagger} \mathcal O_{X_K^{' an}}$-module with the connection $$\nabla(1)=e^{-\pi F(\mathbf a, \mathbf t)}\circ 
\mathrm d_{\mathbf t} \circ e^{\pi F(\mathbf a, \mathbf t)}(1)
=\sum_{i=1}^n \Big(\sum_{j=1}^N \pi w_{ij}a_j\mathbf t^{\mathbf w_j}\Big)
\frac{\mathrm dt_i}{t_i}.$$ 
Let $\mathbb T^n_K$ be the maximal torus in the toric scheme $X'_K$, and let
$D'=X'_K-\mathbb T_K^n$. As $X'_K$ is a smooth toric scheme and $D'$ is a 
normal crossing divisor, the sheaf ${j'}^\dagger\Omega_{X'^{an}_K}^{k}(\log D')$ 
of forms with logarithmic poles along $D'$ is a free ${j'}^\dagger \mathcal O_{X'^{an}_K}$-module 
with basis $$\frac{\mathrm dt_{i_1}}{t_{i_1}}\wedge\cdots\cdot \wedge \frac{\mathrm dt_{i_k}}{t_{i_k}}\quad (1\leq i_1<\cdots<i_k\leq n)$$ for each $k$. 

\begin{lemma}\label{Tn} We have
$$R\Gamma(X'^{an}_K, j'^\dagger \Omega_{X'^{an}_K}^{\cdot}(\log D')\otimes_{j'^\dagger \mathcal O_{X'^{an}_K}} 
\mathcal L_{\psi,F_{\mathbf a}}) \cong R\Gamma_{rig}(\mathbb T^n_\kappa, \mathcal L_{\psi,F_{\mathbf a}\circ i}),$$
where on the righthand side of the isomorphism, $\mathcal L_{\psi,F_{\mathbf a}\circ i}$ is the pull-back of the Dwork isocrystal 
$\mathcal L_\psi$ by the composite $\mathbb T^n_\kappa\stackrel{i}{\to}X'_\kappa\stackrel{F_{\mathbf a}}{\to}\mathbb A^1_\kappa.$
\end{lemma}

\begin{proof}
Choose a regular fan $\Sigma''$ containing $\Sigma'$ such that $|\Sigma''|=\mathbb R^n.$  The toric 
$R$-scheme $X''_R$ associated to $\Sigma''$ is proper over $R$. Let $\widehat X''_R$ (resp.  $X''_\kappa$ and $X''_K$)
be its formal completion (resp. special fiber, resp. generic fibre),  and let $D''=X''_R-\mathbb T_R^n$. 
Then $D''$ is a divisor with normal crossing in $X''_R$. 
We have open immersions $$\mathbb T^n_\kappa \hookrightarrow X'_\kappa \hookrightarrow  X''_\kappa.$$
Note that $X'^{an}_K$ is a strict neighborhood of $]X'_k[$ in $X''^{an}_K$. 
In \cite[Corollary A.4]{B4}, taking $\mathcal X=\widehat X''_R$, $Z=D''$, $U=\mathbb T^n_\kappa$, $V=X'_\kappa$ and $W=X_K^{\prime an}$, we get
\begin{eqnarray*}
R\Gamma(X'^{an}_K, j'^\dagger \Omega_{X'^{an}_K}^{\cdot}(\text{log}D')\otimes_{j'^\dagger \mathcal O_{X'^{an}_K}} \mathcal L_{\psi,F_{\mathbf a}})
\cong R\Gamma_{\text{rig}}(\mathbb T_\kappa^n, \mathcal L_{\psi,F_{\mathbf a}\circ i}).
\end{eqnarray*}
\end{proof}

\begin{remark} In order to apply \cite[Corollary A.4]{B4}, we require the toric schemes $X'$ and $X''$ to be smooth
and $X''$ proper. So we work with the regular fans $\Sigma'$ and $\Sigma''$. The result 
in Lemma \ref{Tn} is crucial in the next subsection \ref{whyregularrefinement} where we show 
$$R\Gamma_{\mathrm{rig}}(\mathbb T_\kappa^n, \mathcal L_{\psi, F_{\mathbf a}\circ i})\cong C^\cdot(L_0^\dagger).$$
This isomorphism couldn't be obtained by simply working with the fan $\Sigma$. 
\end{remark}

\subsection{Proof of Proposition \ref{newfiber} (i)}\label{whyregularrefinement}
Combining Lemmas \ref{comparison} and \ref{Tn} and the fact that  ${j'}^\dagger\Omega_{X'^{an}_K}^{k}(\log D')$ 
is a free ${j'}^\dagger \mathcal O_{X_K^{\prime an}}$-module, we get
\begin{eqnarray*}
R\Gamma_{\text{rig}}(\mathbb T^n_\kappa, \mathcal L_{\psi,F_{\mathbf a}\circ i})&\cong&R\Gamma\Big(X^{an}_K, R\phi_{\ast}\Big(j'^\dagger \Omega_{X'^{an}_K}^{\cdot}
(\log D')\otimes_{j'^\dagger \mathcal O_{X'^{an}_K}} \mathcal L_{\psi,F_{\mathbf a}}\Big)\Big)\\
&\cong&\Gamma\Big(X^{an}_K, \phi_{\ast}\Big(j'^\dagger \Omega_{X'^{an}_K}^{\cdot}
(\log D')\otimes_{j'^\dagger \mathcal O_{X'^{an}_K}} \mathcal L_{\psi,F_{\mathbf a}}\Big)\Big)\\
&\cong&C^\cdot(L_0^\dagger).
\end{eqnarray*}
In the case where $\gamma\not=0$,
we have $\delta=\mathbb R^n$ by our assumption.  Let $\mathcal K_{\chi_i}$  be Kummer isocrystal, that is, 
$K\langle y,y^{-1}\rangle$ considered as a $K\langle y,y^{-1}\rangle$-module with the connection  
$$\nabla_{i}(1)=\gamma_i\frac{dy}{y}.$$ 
Let $p_i:\mathbb T^n_k\to \mathbb T^1_k$ be the projection to the $i$-th factor. 
Define $\mathcal K_{\chi}=\otimes_i p_i^\ast \mathcal K_{\chi_i}$. The same argument as above shows that 
$$R\Gamma_{\text{rig}}(\mathbb T^n_k,\mathcal K_{\chi}\otimes \mathcal L_{\psi, F_{\mathbf a}})\cong C^\cdot(L_0^\dagger).$$
Rigid cohomology groups of the tensor product of pull-backs of the Kummer isocrystal and the Dwork isocrystal are finite dimensional by \cite[3.10]{B6}. 
So $H^k(C^\cdot(L_0^\dagger))$ are finite dimensional for arbitrary $\mathbf a=(a_1, \ldots, a_N)\in D(0,1^+)^N.$

\medskip
Next we prove the second part of Proposition \ref{newfiber} (i) using the method in \cite[Corollary 1.3]{Bour}.
For any $\mathbf w\in \mathbb Z^n\cap \delta$, define 
$$\omega(\mathbf w)=\inf\{r: \mathbf w\in r\Delta\}.$$
One can show that there exist positive real numbers $C_1$ and $C_2$ such that for any $\mathbf w\in \mathbb Z^n\cap \delta$, we have
$$C_1|\mathbf w|\le \omega(\mathbf w)\le C_2|\mathbf w|.$$
For any real numbers $b>0$ and $c$, define
\begin{eqnarray*}
\mathfrak L(b,c)&=&
\bigcup_c\{\sum_{\mathbf w\in \mathbb Z^n\cap \delta}a_\mathbf w\mathbf t^\mathbf w: \, \mathrm{ord}_p(a_{\mathbf w})\geq b\omega(\mathbf w)+c\},\\
\mathfrak L(b)&=&\bigcup_c\mathfrak L(b,c),\\
\mathfrak B&=&\{\sum_{\mathbf w\in \mathbb Z^n\cap \delta}a_\mathbf w\pi^{\omega(\mathbf w)}\mathbf t^\mathbf w \big| \; a_\mathbf w\to 0 \text{ as } |\mathbf w|\to \infty\}.
\end{eqnarray*}
We have $L_0^\dagger=\bigcup_{b>0} \mathfrak L(b)$. 
Since $\mathrm{ord}_p(\pi)=\frac{1}{p-1}$, we have 
$$\mathfrak B\subset \mathfrak L\Big(\frac{1}{p-1}\Big), \quad \mathfrak L(b)\subset\mathfrak B\hbox { if } b>\frac{1}{p-1}.$$
Let $H(\mathbf t)=\mathbf t^{(1-q)\gamma}\exp\big(\pi F(\mathbf a,\mathbf t)- \pi F(\mathbf a^q, \mathbf t^{q})\big)$. 
By the proof of Proposition \ref{estimation} (i) in 1.1, we have
$H(\mathbf t)\in \mathfrak L(\frac{p-1}{pq}).$ The operator 
$G_{\mathbf a}=\Psi_{\mathbf a}\circ H(\mathbf t)$
can be considered as the composite  
$$\mathfrak L(b)\stackrel{H(\mathbf t)}\to 
\mathfrak L\Big(\min\Big(b,\frac{p-1}{pq}\Big)\Big)\stackrel{\Psi_{\mathbf a}}\to \mathfrak L\Big(\min\Big(qb,\frac{p-1}{p}\Big)\Big).$$
So for any $0<b\leq \frac{p-1}{p}$, 
$G_{\mathbf a}$ defines an endomorphism of $\mathfrak L(b)$, and for any 
$0<b\leq \frac{p-1}{pq}$, $G_{\mathbf a}$ induces a morphism 
$\mathfrak L(b)\to\mathfrak L(qb).$
In the case $p>2$, we have $\frac{p-1}{p}>\frac{1}{p-1}$. So $G_{\mathbf a}$ induces a morphism on $\mathfrak B$: 
$$\mathfrak B\hookrightarrow \mathfrak L\Big(\frac{1}{p-1}\Big)
\stackrel{G_{\mathbf a}}\to \mathfrak L\Big(\min\Big(\frac{q}{p-1},\frac{p-1}{p}\Big)\Big)\hookrightarrow \mathfrak B.$$
Using $\mathfrak L(b)$ and $\mathfrak B$ instead of $L_0^\dagger$, we can construct twisted 
logarithmic de Rham complexes $C^\cdot(\mathfrak L(b))$ ($0<b\leq \frac{p-1}{p}$)
and $C^\cdot(\mathfrak B)$. $G_{\mathbf a}$ acts on $C^\cdot(\mathfrak L(b))$ for all $0<b\leq\frac{p-1}{p}$.
$G_{\mathbf a}$ acts on $C^\cdot(\mathfrak B)$ if $p>2$. 

Consider the case where $p>2$. 
Suppose $k\not =n$. Since $H^k(C^\cdot(L_0^\dagger))$ is finite dimensional and $\bigcup_{b>0}\mathfrak L(b)=L_0^\dagger$, the canonical homomorphism
$i_0: H^k(C^\cdot(\mathfrak L(b)))\to H^k(C^\cdot(L_0^\dagger))$ induced by the inclusion 
$C^\cdot(\mathfrak L(b))\hookrightarrow C^\cdot(L_0^\dagger)$
is surjective for sufficiently small $b>0$. Choose a large positive integer $l$ so that $i_0$ is surjective for any 
$b\leq \frac{p-1}{pq^l}$. Choose $b$ so that $\frac{1}{q^l(p-1)}< b\leq \frac{p-1}{pq^l}$. 
Then $G_{\mathbf a}^l$ induces a homomorphism
$H^k(C^\cdot(\mathfrak L(b)))\to H^k(C^\cdot(\mathfrak L(q^lb)))$ and 
$C^\cdot(\mathfrak L(q^l b))$ is a subcomplex of $C^\cdot(\mathfrak B)$.
We have a commutative diagram
$$\xymatrix{
H^k(C^\cdot(\mathcal L(b)))\ar[r]^{i_0} \ar[d]^{G^l_{\mathbf a}}&H^k(C^\cdot(L_0^\dagger))\ar[d]^{G^l_{\mathbf a}}\\
H^k(C^\cdot(\mathcal L(q^lb)))\ar[r]^{i_l}\ar[d]^i&H^k(C^\cdot(L_0^\dagger))\\
H^k(C^\cdot(\mathcal B))\ar[ur]^{\tilde{i}}   }.$$
Here $i_0,i_l,i, \tilde{i}$ are induced by inclusions of complexes. 
Since $i_0$ is surjective and $G_{\mathbf a}:H^k(C^\cdot(L_0^\dagger))\to 
H^k(C^\cdot(L_0^\dagger))$ is an isomorphism, $\tilde{i}$ is surjective. If $\mathbf a\in U$, then $H^k(C^\cdot(\mathfrak B))=0$ for 
$k\not=n$ by \cite[3.7]{A2}. So $H^k(C^\cdot(L_0^\dagger))=0$ for $k\not=n$. Then $\text{dim}\, H^n(C^\cdot(L_0^\dagger))$ is determined by 
the degree of $L$-function by Theorem \ref{arithmetic}. So we have 
$$\text{dim}\, H^n(C^\cdot(L_0^\dagger))=\text{dim}\, H^n(C^\cdot(\mathcal B))=n!\text{vol}(\Delta)$$ by \cite[3.8, 3.9]{A2}.
The proof for the case $p=2$ is similar. We replace $\pi$ satisfying $\pi+\frac{\pi^p}{p}=0$ by $\pi'$ satisfying
$\sum_{i=0}^\infty \frac{\pi'^{p^i}}{p^i}=0$, and replace the function $\exp(\pi(z-z^p))=\exp\Big(\pi z+ \frac{(\pi z)^p}{p}\Big)$
by $E(\pi' z)$, where $E=\exp\Big(\sum_{i=0}^\infty \frac{z^{p^i}}{p^i}\Big)$ is the Artin-Hasse exponential. These replacements 
give rise to a proof which works for all $p$. See \cite{LPG} for details. 

\subsection{Arithmetic $D$-modules}\label{d-module}

The reader may consult \cite[\S 2]{B4}, \cite{B} and \cite[\S 1]{Overhol} for details.
Let $f:\mathscr X'\to\mathscr X$ be a morphism of smooth formal $R$-schemes, $a$ a uniformizer of $R$, $f_i: X'_i\to X_i$ 
the reduction of $f$ modulo $a^{i+1}$, $f_0: X'_0\to X_0$ the induced morphism 
on special fibers, and $T$ (resp. $T'$) a divisor on $X_0$ (resp. $X'_0$) such that $f_0^{-1}(T)\subset T'$. Let $\mathscr U\subset \mathscr X$ 
be an affine open subset and let $h\in \Gamma(\mathscr U,\mathcal O_{\mathscr X})$ so that the divisor $T$ of $X_0$ is given 
by $h\equiv 0 \mod a$. Let $U_i$ and $h_i\in \Gamma(U_i,\mathcal O_{U_i})$ 
be the reduction of $\mathscr U$ and $h$ modulo $a^{i+1}$.  For any $m\geq 0$, define
\begin{eqnarray*}
&&\mathcal B_{X_i}^{(m)}(T)|_{U_i}=\mathcal O_{U_i}[t]/(h_i^{p^{m+1}}t-p),\\
&& \widehat{\mathcal B}_{\mathscr X}^{(m)}(T)|_{\mathscr U}
=\varprojlim_i\mathcal B_{X_i}^{(m)}(T)|_{U_i}=\mathcal O_{\mathscr U}\{t\}/(h^{p^{m+1}}t-p).
\end{eqnarray*}
$\widehat{\mathcal B}_{\mathscr X}^{(m)}(T)$ is a $p$-adically complete $\mathcal O_{\mathscr X}$-algebra 
depending only on $\mathscr X$ and $T$. Define the sheaf of functions on $\mathscr X$ with overconvergent singularities along $T$ by
$$\mathcal O_{\mathscr X, \mathbb Q}(^\dagger T)=\varinjlim_{m}\widehat{\mathcal B}_{\mathscr X}^{(m)}(T)\otimes_{\mathbb Z}\mathbb Q.$$
Denote by $\mathcal D_{X_i}^{(m)}$ the sheaf differential operators of level $m$ on $X_i$. If $x_1,\ldots, x_N$ are local coordinates on $U_i$ and 
$\partial_j=\partial/\partial x_j$ for $1\le j\le N$, we have
$$\mathcal D_{X_i}^{(m)}(U_i)=\{\sum_{\mathbf k}\mathbf{q}_{\mathbf k}^{(m)}! a_{\mathbf k}
\partial^{[\mathbf k]}:\;  a_{\mathbf k}\in \Gamma(U_i, \mathcal O_{X_i}), a_{\mathbf k}\not =0 \text{ for only finitely many } \mathbf k\},$$
where $\partial^{[\mathbf k]}=\frac{1}{\mathbf k!}\partial_1^{k_1}\ldots\partial_N^{k_N}$, and $\mathbf{q}_{\mathbf k}^{(m)}$ is determined by the 
expression $\mathbf k=p^m\mathbf{q}_{\mathbf k}^{(m)}
+\mathbf{r}_{\mathbf k}^{(m)}$ with $0\le \mathbf r_{\mathbf k, j}^{(m)}<p^m$ for all $j$.
The sheaf of differential operators of level $m$ on $\mathscr X$ is defined by
$$\widehat{\mathcal D}_{\mathscr X}^{(m)}=\varprojlim_i
\mathcal D_{X_i}^{(m)}, \quad \widehat{\mathcal D}_{\mathscr X, \mathbb Q}^{(m)}=\widehat{\mathcal D}_{\mathscr X}^{(m)}\otimes_{\mathbb Z} \mathbb Q.$$ We have
$$\widehat{\mathcal D}_{\mathscr X, \mathbb Q}^{(m)}(\mathscr U)
=\{\sum_{\mathbf k}\mathbf{q}_{\mathbf k}^{(m)}!a_{\mathbf k}\mathbf{\partial}^{[\mathbf k]}: \; a_{\mathbf k}\in 
\Gamma(\mathscr U, \mathcal O_{\mathscr X, \mathbb Q}), a_{\mathbf k}\to 0 \text{ as } \vert\mathbf{k}\vert\to \infty\},$$ 
where $\mathcal O_{\mathscr X, \mathbb Q}=\mathcal O_{\mathscr X}\otimes_{\mathbb Z}\mathbb Q$. 
Define the sheaf of differential operators overconvergent along $T$ as
$$\mathcal D_{\mathscr X, \mathbb Q}^\dagger(^\dagger T):=
\varinjlim_m\widehat{\mathcal D}_{\mathscr X, \mathbb Q}^{(m)}(T)
:=\varinjlim_m\widehat{\mathcal B}_{\mathscr X}^{(m)}(T)\widehat{\otimes}_{\mathcal O_{\mathscr X}}
\widehat{\mathcal D}_{\mathscr X, \mathbb Q}^{(m)}.$$ We have
$$\mathcal D_{\mathscr X, \mathbb Q}^\dagger(^\dagger T)(\mathscr U)
=\{\sum_{j, \mathbf k}\frac{a_{j, \mathbf k}}{h^{j+1}}\partial^{[\mathbf k]}:\; a_{j, \mathbf k}\in \Gamma(\mathscr U, \mathcal O_{\mathscr X, \mathbb Q}), 
\;  \vert a_{j, \mathbf k}\vert s^{j+|\mathbf k|}  \hbox { are bounded for some } s>1\}.$$
Define
$$\widehat{\mathcal D}_{\mathscr X'\to\mathscr X}^{(m)}(T', T):=
\widehat{\mathcal B}_{\mathscr X'}^{(m)}(T')\widehat{\otimes}_{f^{-1}\mathcal O_{\mathscr X}}f^{-1}\widehat{\mathcal D}_{\mathscr X}^{(m)}.$$
It is a  $(\widehat{\mathcal D}_{\mathscr X'}^{(m)}(T'), f^{-1}\widehat{\mathcal D}_{\mathscr X}^{(m)}(T))$-bimodule. Define
$$\mathcal D_{\mathscr X'\to\mathscr X, \mathbb Q}^{\dagger}(^\dagger T', T)
=\big(\varinjlim_m\widehat{\mathcal D}_{\mathscr X'\to\mathscr X}^{(m)}(T', T)\big)\otimes_{\mathbb Z}\mathbb Q.$$
It is a $(\mathcal D_{\mathscr X', \mathbb Q}^{\dagger}(^\dagger T'), f^{-1}\mathcal D_{\mathscr X, \mathbb Q}^{\dagger}(^\dagger T))$-bimodule.
For any object $\mathcal E$ in the derived category $D_{coh}^b(\mathcal D_{\mathscr X, \mathbb Q}^\dagger(^\dagger T))$ of
$\mathcal D_{\mathscr X, \mathbb Q}^\dagger(^\dagger T)$-modules with coherent cohomology, its extraordinary inverse image $f^{!}\mathcal E$ is defined by
$$f^!(\mathcal E):=\mathcal D_{\mathscr X'\to\mathscr X, \mathbb Q}^{\dagger}(^\dagger T', T)
\otimes_{f^{-1}\mathcal D_{\mathscr X, \mathbb Q}^\dagger(^\dagger T)}^Lf^{-1}\mathcal E[d_{\mathscr X'}-d_{\mathscr X}],$$
where $d_{\mathscr X}$ (resp. $d_{\mathscr X'}$) is the dimension of $\mathscr X$ (resp. $\mathscr X'$). 
Let $T_1$ be another divisor of $X_0$ containing $T$. We have a functor 
\begin{eqnarray*}
(^\dagger T_1, T): 
D_{\text{coh}}^b(\mathcal D_{\mathscr X, \mathbb Q}^\dagger(^\dagger T))\to D_{\text{coh}}^b
(\mathcal D_{\mathscr X, \mathbb Q}^\dagger(^\dagger T_1)),\quad
(^\dagger T_1, T)(\mathcal E):=\mathcal D_{\mathscr X, \mathbb Q}^\dagger
(^\dagger T_1)\otimes^L_{\mathcal D_{\mathscr X, \mathbb Q}^\dagger(^\dagger T)}\mathcal E.
\end{eqnarray*}
Here we could omit the symbol $L$ since the morphism 
$\mathcal D_{\mathscr X, \mathbb Q}^\dagger(^\dagger T)\to \mathcal D_{\mathscr X, \mathbb Q}^\dagger(^\dagger T_1)$ is flat. (cf. \cite[4.3.10-11]{B}.)

In the following, we work with $\mathscr X=\mathscr P^N$ and the divisor 
$\infty=\mathscr P^N-\mathscr A^N$. 
For simplicity, when there is no confusion on the divisors $T=\infty$ and $T'$, we use the notation 
$\mathcal O_{\mathscr X, \mathbb Q}(\infty)$, $\mathcal D_{\mathscr X, \mathbb Q}^{\dagger}(\infty)$,  $\mathcal D_{\mathscr X', \mathbb Q}^{\dagger}(\infty)$, 
$\widehat{\mathcal D}_{\mathscr X'\to\mathscr X}^{(m)}(\infty)$, $\mathcal D_{\mathscr X'\to\mathscr X, \mathbb Q}^{\dagger}(\infty)$ 
instead of $\mathcal O_{\mathscr X, \mathbb Q}(^\dagger T)$,
$\mathcal D_{\mathscr X, \mathbb Q}^{\dagger}(^\dagger T)$, $\mathcal D_{\mathscr X', \mathbb Q}^{\dagger}(^\dagger T')$, 
$\widehat{\mathcal D}_{\mathscr X'\to\mathscr X}^{(m)}(T',T),$ $\mathcal D_{\mathscr X'\to\mathscr X, \mathbb Q}^{\dagger}(^\dagger T',T)$ 
and use $(^\dagger T_1)$ instead of $(^\dagger T_1, T)$. 

\subsection{Proof of Propositions \ref{overholonomic} and \ref{newfiber} (ii)}\label{fibre1} \label{proof_of_fiber}
For a point $\mathbf a$ in the formal affine space 
$\mathscr A^N$, let $i_{\mathbf a}: \mathbf a\to \mathscr P^N$ be the 
closed immersion and let $\mathcal J$ be its ideal sheaf. By \cite[Th\'eor\`eme 2.1.4]{Caro1}, it suffices
to show $H^k\Big(i_{\mathbf a}^!(C^\cdot(\mathcal L^\dagger))\Big)$ is finite dimensional for each $k$ and each point $\mathbf a$. 
By Proposition \ref{newfiber} (i) which we have shown, we only have to check
$i_{\mathbf a}^! \big(C^\cdot(\mathcal L^\dagger)\big) \cong C^\cdot(L_0^\dagger)[-N].$ 
By \cite[Lemma 2.2.3]{Caro}, we have $$\mathcal D_{\mathbf a\to \mathscr P^N, \mathbb Q}^{\dagger}= 
i^{-1}_{\mathbf a}(\mathcal D_{\mathscr P^N, \mathbb Q}^\dagger/\mathcal J\mathcal D_{\mathscr P^N, \mathbb Q}^\dagger).$$
Applying \cite[Proposition 1.1.10]{Caro} to $\mathcal D_{\mathscr P^N, \mathbb Q}^{\dagger}(\infty)$, we have
\begin{eqnarray*}
\mathcal D_{\mathbf a\to \mathscr P^N, \mathbb Q}^{\dagger}(\infty)&\cong&
\mathcal D_{\mathbf a\to \mathscr P^N, \mathbb Q}^{\dagger}(\infty)
\otimes^L_{i_{\mathbf a}^{-1}\mathcal D_{\mathscr P^N, \mathbb Q}^{\dagger}(\infty)}i_{\mathbf a}^{-1}\mathcal D_{\mathscr P^N, \mathbb Q}^{\dagger}(\infty)\\
&\cong&\mathcal D_{\mathbf a\to \mathscr P^N, \mathbb Q}^{\dagger}
\otimes^L_{i_{\mathbf a}^{-1}\mathcal D_{\mathscr P^N, \mathbb Q}^{\dagger}}i_{\mathbf a}^{-1}\mathcal D_{\mathscr P^N, \mathbb Q}^{\dagger}(\infty)\\
&\cong&i^{-1}_{\mathbf a}
(\mathcal D_{\mathscr P^N, \mathbb Q}^\dagger/\mathcal J\mathcal D_{\mathscr P^N, \mathbb Q}^\dagger)
\otimes^L_{i_{\mathbf a}^{-1}\mathcal D_{\mathscr P^N, \mathbb Q}^{\dagger}}i_{\mathbf a}^{-1}\mathcal D_{\mathscr P^N, \mathbb Q}^{\dagger}(\infty)\\
&\cong& i^{-1}_{\mathbf a}\Big(\mathcal D_{\mathscr P^N, \mathbb Q}^\dagger/\mathcal J\mathcal D_{\mathscr P^N, \mathbb Q}^\dagger
\otimes^L_{\mathcal D_{\mathscr P^N, \mathbb Q}^{\dagger}}\mathcal D_{\mathscr P^N, \mathbb Q}^{\dagger}(\infty)\Big)\\
&\cong& i^{-1}_{\mathbf a}\Big( \mathcal D_{\mathscr P^N, \mathbb Q}^\dagger(\infty)/\mathcal J\mathcal D_{\mathscr P^N, \mathbb Q}^\dagger(\infty)\Big)\\
&\cong& i^{-1}_{\mathbf a}\Big( \mathcal O_{\mathscr P^N, \mathbb Q}(\infty)/\mathcal J\mathcal O_{\mathscr P^N, \mathbb Q}(\infty)
\otimes_{\mathcal O_{\mathscr P^N, \mathbb Q}(\infty)}^L  \mathcal D_{\mathscr P^N, \mathbb Q}^\dagger(\infty)\Big),
\end{eqnarray*}
where for the last two isomorphisms, we use the fact that $\mathcal D_{\mathscr P^N, \mathbb Q}^{\dagger}(\infty)$ is flat over 
$\mathcal D_{\mathscr P^N, \mathbb Q}^{\dagger}$ (\cite[4.3.11]{B}), and $\mathcal D_{\mathscr P^N, \mathbb Q}^{\dagger}(\infty)$ is flat over 
$\mathcal O_{\mathscr P^N, \mathbb Q}(\infty)$ (\cite[3.5.2]{B}).
So we have 
\begin{eqnarray*}
i_{\mathbf a}^! \big(C^\cdot(\mathcal L^\dagger)\big)&=& \mathcal D_{\mathbf a
\to \mathscr P^N, \mathbb Q}^{\dagger}(\infty)\otimes^L_{i_{\mathbf a}^{-1}\mathcal D_{\mathscr P^N, \mathbb Q}^\dagger(\infty)}i_{\mathbf a}^{-1}C^\cdot(\mathcal L^\dagger)[-N]\\
&\cong& i^{-1}_{\mathbf a}\Big( \mathcal O_{\mathscr P^N, \mathbb Q}(\infty)/\mathcal J\mathcal O_{\mathscr P^N, \mathbb Q}(\infty)
\otimes_{\mathcal O_{\mathscr P^N, \mathbb Q}(\infty)}^L  \mathcal D_{\mathscr P^N, \mathbb Q}^\dagger(\infty)\Big)
\otimes^L_{i_{\mathbf a}^{-1}\mathcal D_{\mathscr P^N, \mathbb Q}^\dagger(\infty)}i_{\mathbf a}^{-1}C^\cdot(\mathcal L^\dagger)[-N]\\
&\cong& i^{-1}_{\mathbf a}\Big( \mathcal O_{\mathscr P^N, \mathbb Q}(\infty)/\mathcal J\mathcal O_{\mathscr P^N, \mathbb Q}(\infty)
\otimes_{\mathcal O_{\mathscr P^N, \mathbb Q}(\infty)}^L  
\mathcal D_{\mathscr P^N, \mathbb Q}^\dagger(\infty)\otimes^L_{\mathcal D_{\mathscr P^N, \mathbb Q}^\dagger(\infty)}C^\cdot(\mathcal L^\dagger)\Big)[-N]\\
&\cong& i^{-1}_{\mathbf a}\Big( \mathcal O_{\mathscr P^N, \mathbb Q}(\infty)/\mathcal J\mathcal O_{\mathscr P^N, \mathbb Q}(\infty)
\otimes_{\mathcal O_{\mathscr P^N, \mathbb Q}(\infty)}^L  C^\cdot(\mathcal L^\dagger)\Big)[-N]\\
&\cong&K'\otimes^L_{K\langle\mathbf x\rangle^\dagger}C^\cdot(L^\dagger)[-N]\\
&\cong&C^\cdot(L_0^\dagger)[-N].
\end{eqnarray*}
Here $K'$ denotes the residue field of $\mathbf a$, and the last isomorphism follows from Lemma \ref{flat}. This finishes the proof.

\subsection{} Recall that 
\begin{eqnarray*}
D^\dagger&=&\Gamma(\mathscr P^N, \mathcal D^\dagger_{\mathscr P^N, \mathbb Q}(\infty)) \\
&=&\bigcup_{r>1,\;s>1}
\{\sum_{\mathbf v\in\mathbb Z_{\geq 0}^N} f_{\mathbf v}(\mathbf x)\frac{\partial^{\mathbf v}}{\pi^{\vert \mathbf v\vert}}:\;
f_{\mathbf v}(\mathbf x)\in K\{ r^{-1}\mathbf x\},\; 
\Vert f_{\mathbf v}(\mathbf x)\Vert_r s^{\vert \mathbf v\vert} \hbox { are bounded}\},\\
L^{\dagger}&=&\bigcup_{r>1,\;s>1}
\{\sum_{\mathbf w\in \mathbb Z^n\cap \delta} a_{\mathbf w}(\mathbf x)\mathbf t^{\mathbf w}:\; a_{\mathbf w}(\mathbf x)\in K\{r^{-1}\mathbf x\}, \; 
\Vert a_{\mathbf w}(\mathbf x)\Vert_r s^{\vert \mathbf w\vert} \hbox{ are bounded}\},\\
L^{\dagger\prime}&=&\bigcup_{r>1,\;s>1}
\{\sum_{\mathbf w\in C(A)} a_{\mathbf w}(\mathbf x)\mathbf t^{\mathbf w}:\; a_{\mathbf w}(\mathbf x)\in K\{r^{-1}\mathbf x\}, \; 
\Vert a_{\mathbf w}(\mathbf x)\Vert_r s^{\vert \mathbf w\vert} \hbox{ are bounded}\},
\end{eqnarray*}
where $C(A)=\{k_1\mathbf w_1+\cdots +k_N \mathbf w_N:\; k_i\in\mathbb Z_{\geq 0}\}$. 
Consider their algebraic counterpart 
\begin{eqnarray*}
D&=&\{\sum_{\mathbf v\in\mathbb Z_{\geq 0}^N} f_{\mathbf v}(\mathbf x)\partial^{\mathbf v}:\;
f_{\mathbf v}(\mathbf x)\in K[\mathbf x],\; 
  f_{\mathbf v}(\mathbf x)\neq 0 \hbox{ for finitely many } \mathbf v\},\\
  L&=&\{\sum_{\mathbf w\in \mathbb Z^n\cap \delta} a_{\mathbf w}(\mathbf x)\mathbf t^{\mathbf w}:\; a_{\mathbf w}(\mathbf x)\in K[\mathbf x], \; 
a_{\mathbf w}(\mathbf x)\neq 0 \hbox{ for finitely many } \mathbf w\},\\
L'&=& \{\sum_{\mathbf w\in C(A)} a_{\mathbf w}(\mathbf x)\mathbf t^{\mathbf w}:\;
a_{\mathbf w}(\mathbf x)\in K[\mathbf x],\; 
  a_{\mathbf w}(\mathbf x)\neq 0 \hbox{ for finitely many } \mathbf w\}.
\end{eqnarray*}
Then $L'$, $L$ and $L/\sum_{i=1}^n F_{i,\gamma}L$ are $D$-module. 
By the proof of \cite[theorem 4.4]{A1}, 
we have $$D/\sum_{\lambda\in \Lambda}D \Box_\lambda\cong  L^\prime.$$ Combined with Proposition \ref{grobner}, we have 
$$D^\dagger\otimes_{D}L'\cong L^{\dagger\prime}.$$
In \ref{co} where we prove Proposition \ref{coherent}, we construct an exact sequence
$$\bigoplus_{B'} L^{\dagger\prime}\rightarrow \bigoplus_B L^{\dagger\prime} \rightarrow L^\dagger\rightarrow 0,$$
where the direct sums are taken over certain finite sets $B'$ and $B$.
The same proof shows that we have an exact sequence
$$\bigoplus_{B'} L^{\prime}\rightarrow \bigoplus_B L^{\prime} \rightarrow L\rightarrow 0.$$
We have a commutative diagram
\begin{equation}\label{LLL}
\xymatrix{
\bigoplus_{B'} L^{\prime}\ar[r]\ar[d]& \bigoplus_{B} L^{\prime}\ar[r]\ar[d] & L\ar[r] \ar[d] & 0\\
\bigoplus_{B'} L^{\dagger\prime}\ar[r]& \bigoplus_{B} L^{\dagger\prime} \ar[r] &  L^\dagger \ar[r]  & 0.}
\end{equation}
It follow that $$D^\dagger\otimes_{D}L\cong L^{\dagger}.$$
By \cite[Proposition 3.1.1]{B}, $D$ is coherent. So $L'$ is a coherent $D$-module. This implies that $L$ 
and $L/\sum_{i=1}^n F_{i,\gamma}L$ are coherent.  Let $\mathcal D_{\mathscr P^N}$ be the sheaf of 
the usual differential operators on the formal scheme $\mathscr P^N$, and let  
$$\mathcal D_{\mathscr P^N, \mathbb Q}= \mathcal D_{\mathscr P^N}\otimes_{\mathbb Z}\mathbb Q, \quad 
\mathcal D_{\mathscr P^N, \mathbb Q}(\infty)=\mathcal O_{\mathscr P^N,\mathbb Q}(\infty)
\otimes_{\mathcal O_{\mathscr P^N, \mathbb Q}}\mathcal D_{\mathscr P^N, \mathbb Q}.$$ 
They are coherent. By \cite[1.2.1]{Caro1}, the functor $\Gamma(\mathscr P^N, -)$ induces an equivalence of between the category of 
coherent $\mathcal D_{\mathscr P^N, \mathbb Q}(\infty)$-modules and the category of coherent 
$\Gamma(\mathscr P^N, \mathcal D_{\mathscr P^N, \mathbb Q}(\infty))$-modules. Let 
$$D^{an}=
\{\sum_{\mathbf v\in\mathbb Z_{\geq 0}^N} f_{\mathbf v}(\mathbf x)\partial^{\mathbf v}:\;
f_{\mathbf v}(\mathbf x)\in K\langle\mathbf x\rangle^\dagger,\; 
  f_{\mathbf v}(\mathbf x)\neq 0 \hbox{ for finitely many } \mathbf v\}.
$$
We have $\Gamma(\mathscr P^N, \mathcal D_{\mathscr P^N, \mathbb Q}(\infty))\cong D^{an}$. 
Denote the coherent $\mathcal D_{\mathscr P^N, \mathbb Q}(\infty)$-module corresponding
to $D^{an}\otimes_D L$ by 
$\mathcal L$. 
We then have 
$$
 \mathcal D^\dagger_{\mathscr P^N, \mathbb Q}(\infty)\otimes_{\mathcal D_{\mathscr P^N, \mathbb Q}(\infty)}
 \mathcal L\cong  \mathcal L^{\dagger}.$$
It follows that 
\begin{eqnarray}\label{importantiso}
\mathcal D^\dagger_{\mathscr P^N, \mathbb Q}(\infty)\otimes_{\mathcal D_{\mathscr P^N, \mathbb Q}(\infty)}\Big(\mathcal L\Big/\sum_i F_{i, \gamma}\mathcal L\Big)
\cong  \mathcal L^\dagger/\sum_i F_{i, \gamma}\mathcal L^\dagger.
\end{eqnarray}

Fix $\mathbf a \in U$. Then 
$F(\mathbf a, \mathbf x)$ is non-degenerate. Fix a monomial basis $\{\mathbf t^{\mathbf u_1},\ldots, \mathbf t^{\mathbf u_d}\}$ of 
$L_0/\sum_{i=1}^n F_{i,\gamma,\mathbf a}L_0$, where $L_0=K'[\mathbf t^{\mathbf u} :\, \mathbf u\in \mathbb Z^n\cap \delta]$ and $d=n!\text{vol}(\Delta)$. 
By \cite[Proposition 6.6]{A2}, there exists a polynomial $b(\mathbf x)$ depending on $\mathbf a$ such that for any $\mathbf a'\in \mathbb A^N_K$ 
satisfying $b(\mathbf a')\ne 0$,  $F(\mathbf a',t)$ is non-degenerate and $\{\mathbf t^{\mathbf u_1},\ldots, \mathbf t^{\mathbf u_d}\}$ form a basis
of $L_0/\sum_{i=1}^n F_{i,\gamma,\mathbf a'}L_0$. 

\begin{lemma}\label{free} $(L/\sum_{i=1}^n F_{i,\gamma}L)_b$ is a free $K[\mathbf x]_b$ module with basis
$\{\mathbf t^{\mathbf u_1},\ldots, \mathbf t^{\mathbf u_d}\}$. 
\end{lemma}

\begin{proof} Let $M$ be a positive integer so that $\omega(\mathbf u)\in \frac{1}{M} {\mathbb Z}$ for any 
$\mathbf u\in \mathbb Z^n\cap \delta$. Let $L_{m/M}$ be the $K[\mathbf x]$-submodule of $L$ 
generated by all $\mathbf t^{\mathbf u}$ with $\omega (\mathbf u)\le m/M$, and let 
$L^{(m/M)}$ be the $K[\mathbf x]$-submodule generated by $\mathbf t^{\mathbf u}$ with $\omega (\mathbf u)=m/M.$ We have
$b(\mathbf x)=\prod_{m=0}^{M(d+1)} b_{m/M}(\mathbf x),$ where $b_{m/M}(\mathbf x)$ are defined in \cite[\S 6]{A2}.
By \cite[Proposition 6.3]{A2}, for any $h\in L^{(m/M)}$, we have 
\begin{eqnarray*}
h&=&\sum_{\omega(\mathbf u_j)=m/M} A_j \mathbf t^{\mathbf u_j}+\zeta+\sum_{i=1}^n t_i\frac{\partial}{\partial t_i}F(\mathbf x, \mathbf t) \eta_i \\
&=& \sum_{\omega(\mathbf u_j)=m/M} A_j \mathbf t^{\mathbf u_j}+\zeta'+\sum_{i=1}^n F_{i,\gamma} \eta_i'.
\end{eqnarray*}
with $A_j\in K[\mathbf x]_b$, $\zeta, \zeta' \in (L_{(m-1)/M})_b$ and $\eta_i, \eta_i' \in (L^{(m/M-1)})_b$.
Applying the same procedure to $\zeta'$ and after finitely many steps, we see 
$\mathbf t^{\mathbf u_1},\ldots, \mathbf t^{\mathbf u_d}$ generate $(L/\sum_{i=1}^n F_{i,\gamma} L)_b$.
Suppose $f_1,\ldots, f_d\in K[\mathbf x]_b$ and 
$$f_1\mathbf t^{u_1}+\cdots+f_d\mathbf t^{u_d}=\sum_{i=1}^n F_{i,\gamma}\eta_i$$
for some $\eta_i\in L.$ Then for any $\mathbf a'\in \mathbb A^N$ with $b(\mathbf a')\ne 0$, we have
$$f_1(\mathbf a')\mathbf t^{u_1}+\cdots+f_d(\mathbf a') \mathbf t^{u_d}=\sum_{i=1}^n F_{i,\gamma,\mathbf a'}\eta_i(\mathbf a').$$
As $\{\mathbf t^{\mathbf u_1},\ldots, \mathbf t^{\mathbf u_d}\}$ is a basis of $L_0/\sum_{i=1}^n F_{i,\gamma,\mathbf a'}L_0,$ 
we have $f _1(\mathbf a')=\cdots=f _d(\mathbf a')=0$. Thus $f_1=\cdots=f_d=0$ in 
$K[\mathbf x]_b$. 
\end{proof}

\begin{remark}\label{remark} Let $\bar L_0=\kappa [\mathbf x][\mathbf t^{\mathbf u}:\, u\in \mathbb Z\cap \delta]$, and let
$\bar L_{0, m/M}$ and $\bar L_0^{(m/M)}$ be defined similarly as in proof of Lemma \ref{free}. 
The matrix of the homomorphism $\phi_m: (L^{(m/M-1)})^n\to L^{(m/M)}$ defined in \cite[\S 6]{A2} is a lifting of 
a similar homomorphism $\bar{\phi}_m: (\bar L^{(m/M-1)})^n\to \bar L^{(m/M)}$. 
If $\mathbf a$ is a lifting of $\bar{\mathbf a}\in U_0,$ then we have $\mathbf a\in U$. By the same argument in \cite{A2}, we 
can choose $\bar b(\mathbf x)\in \kappa [\mathbf x]$ so that $b$ is a lifting of $\bar b$, and 
$(\bar L_0/\sum_{i=1}^n F_{i,\gamma,\bar{\mathbf a}}\bar L_0)_{\bar b}$ is free with basis
$\{\mathbf t^{\mathbf u_1},\ldots, \mathbf t^{\mathbf u_d}\}$. 
\end{remark}

\begin{lemma}\label{fre} $\Big(\mathcal L/\sum_{i=1}^n F_{i,\gamma}\mathcal L\Big)|_{U_0}$ is a locally free $\mathcal O_{\mathscr P^N,\mathbb Q}|_{U_0}$-module.
\end{lemma} 

\begin{proof} Let $H=L/\sum_{i=1}^n F_{i,\gamma}L$ 
and $\mathcal H=\mathcal L/\sum_{i=1}^n F_{i,\gamma}\mathcal L$. Then $\mathcal H$ is the coherent $\mathcal D_{\mathscr P^N, \mathbb Q}(\infty)$-module 
corresponding
to the coherent $D^{an}$-module $D^{an}\otimes_D H$. By \cite[1.2.1]{Caro1}, $\mathcal H$ is the sheaf associated to the presheaf 
\begin{eqnarray*}
V&\mapsto& \mathcal D_{\mathscr P^N, \mathbb Q}(\infty)(V)\otimes_{D^{an}} D^{an} \otimes_D H\\
&\cong& \mathcal D_{\mathscr P^N, \mathbb Q}(\infty)(V)\otimes_D H
\end{eqnarray*} 
for any open subset $V$ of $\mathscr P^N$. 
For any closed point $\bar{\mathbf  a}$ in $U_0$, choose $\bar b(\mathbf x)$ and $b(\mathbf x)$ as in Remark \ref{remark} and Lemma 
\ref{free}. Let 
$U'_0$ be the open subset of $U_0$ consisting of those points $\bar{\mathbf a}'
\in U_0$ such that 
$\bar b(\mathbf a')\not=0$, and let $U'=\mathrm{sp}^{-1}(U'_0)$. Then 
$\mathcal H|_{U'_0}$ is the sheaf associated to the presheaf 
\begin{eqnarray*}
V&\mapsto& \mathcal D_{\mathscr P^N, \mathbb Q}(\infty)(V) \otimes_D H\\
&\cong&  \mathcal D_{\mathscr P^N, \mathbb Q}(\infty)(V)  \otimes_{\Gamma(U'_0,  \mathcal D_{\mathscr P^N, \mathbb Q}(\infty))}
\Gamma(U'_0,  \mathcal D_{\mathscr P^N, \mathbb Q}(\infty)) \otimes_D H
\end{eqnarray*} 
for any open subset $V$ of $U'_0$. We have 
$$\Gamma(U'_0,  \mathcal D_{\mathscr P^N, \mathbb Q}(\infty)) \cong \mathcal O_{\mathbb P^{N, an}_K}(U') \otimes_{K[\mathbf x]} D.$$
So $$ \Gamma(U'_0,  \mathcal D_{\mathscr P^N, \mathbb Q}(\infty)) \otimes_D H\cong  \mathcal O_{\mathbb P^{N, an}_K}(U') \otimes_{K[\mathbf x]} H.$$
By Lemma \ref{free}, $\mathcal O_{\mathbb P^{N,an}_K}(U') \otimes_{K[\mathbf x]} H$ is a free $\mathcal O_{\mathbb P^{N,an}_K}(U')$-module. Thus 
$\mathcal H|_{U'_0}$ is a free $\mathcal O_{\mathscr P^N,\mathbb Q}|_{U'_0}$-module. 
As $\bar {\mathbf a}$ goes over closed points of $U_0$, $U'_0$ form an open covering of $U_0$. So 
$\mathcal H|_{U_0}$ is a locally free $\mathcal O_{\mathscr P^N,\mathbb Q}|_{U_0}$-module. 
\end{proof}

On the other hand, by Proposition \ref{overholonomic} and \cite[1.3.4]{Caro2},  there exists a Zariski open $V_0\subset \mathscr A^N$ such that 
$H^n(C^\cdot(\mathcal L^{\dagger}))\cong \mathcal L^{\dagger}/ \sum_{i=1}^n F_{i,\gamma} \mathcal L^{\dagger}$ 
defines a convergent isocrystal on $V_0$.

\begin{lemma}\label{isocrys} We have an isomorphism
$(\mathcal L/\sum_{i=1}^n F_{i,\gamma}\mathcal L)|_{U_0\cap V_0} \stackrel\cong\rightarrow (\mathcal 
L^\dagger/\sum_{i=1}^n F_{i,\gamma}\mathcal L^\dagger)|_{U_0\cap V_0}$.
\end{lemma}

\begin{proof} Let $\phi: (\mathcal L/\sum_{i=1}^n F_{i,\gamma}\mathcal L)|_{U_0\cap V_0} \rightarrow (\mathcal 
L^\dagger/\sum_{i=1}^n F_{i,\gamma}\mathcal L^\dagger)|_{U_0\cap V_0}$ be the homomorphism induced by the inclusion $L\hookrightarrow L^\dagger$.  
Both $(\mathcal L/\sum_{i=1}^n F_{i,\gamma}\mathcal L)|_{U_0\cap V_0}$ and 
$(\mathcal L^\dagger/\sum_{i=1}^n F_{i,\gamma}\mathcal L^\dagger)|_{U_0\cap V_0}$ are 
locally free $\mathcal O_{\mathcal P^N, \mathbb Q}|_{U_0\cap V_0}$-modules of the same rank $d=n!\text{vol}(\Delta)$ by Lemma \ref{fre} 
and Proposition \ref{newfiber}. So we only need to show $\phi$ is surjective.
Cover $U_0\cap V_0$ by finitely many affine open subsets $W_i$ so that $(\mathcal 
L/\sum_{i=1}^n F_{i,\gamma}\mathcal L)|_{W_i}$ and $(\mathcal 
L^\dagger/\sum_{i=1}^n F_{i,\gamma}\mathcal L^\dagger)|_{W_i}$ are free $\mathcal O_{\mathscr P^N,\mathbb Q}|_{W_i}$-modules.
In 2.2-2.4, we construct an epimorphism $D^\dagger\to L^{\dagger\prime}$
whose kernel is $\sum_{\lambda\in \Lambda} D^\dagger\Box_\lambda$, and we construct
an epimorphism $\bigoplus_{B} L^{\dagger\prime}\to L^\dagger$. We thus get an epimorphism 
$$\bigoplus_{B} D^{\dagger}\to L^\dagger/\sum_{i=1}^n F_{i,\gamma} L^\dagger$$ and hence an epimorphism of sheaves
$$f^\dagger: \bigoplus_{B}\mathcal D^\dagger_{\mathscr P^N, \mathbb Q}(\infty)\to 
\mathcal L^\dagger/\sum_{i=1}^n F_{i,\gamma}\mathcal L^\dagger.$$
Similarly, we can construct an epimorphism of sheaves 
$$\bigoplus_{B} \mathcal D_{\mathscr P^N, \mathbb Q}(\infty) \to\mathcal L/\sum_{i=1}^n F_{i,\gamma}\mathcal L.$$
Note that  $$\mathscr D_{\mathscr P^N, \mathbb Q}(\infty)|_{\mathscr A^N}\cong \mathscr D_{\mathscr P^N, \mathbb Q}|_{\mathscr A^N}.$$
The following diagram commutes: 
$$\begin{array}{ccc}
\bigoplus_{B}{\mathcal D}_{\mathscr P^N, \mathbb Q}(W_i) &\stackrel f \to& 
 (\mathcal L/\sum_{i=1}^n F_{i,\gamma}\mathcal L)(W_i)\\
 \downarrow&&\qquad\downarrow{\scriptstyle \phi(W_i)}\\
\bigoplus_{B} \widehat{\mathcal D}^{(m)}_{\mathscr P^N, \mathbb Q}(W_i) &\stackrel{f^{(m)}}
\to&(\mathcal L^\dagger/\sum_{i=1}^n F_{i,\gamma}\mathcal L^\dagger)(W_i)
\\
\downarrow&&\parallel\\
\bigoplus_{B}\mathcal D^\dagger_{\mathscr P^N, \mathbb Q}(\infty)(W_i) &\stackrel {f^\dagger}\to 
& (\mathcal L^\dagger/\sum_{i=1}^n F_{i,\gamma}\mathcal L^\dagger)({W_i}).
\end{array}$$
We have 
$$\mathcal D^\dagger_{\mathscr P^N, \mathbb Q}(\infty)(W_i)=\bigcup_m \widehat{\mathcal D}^{(m)}_{\mathscr P^N, \mathbb Q}(W_i).$$
By \cite[Lemma 4.1.2]{B}, the Banach norm on $(\mathcal L^\dagger/\sum_{i=1}^n F_{i,\gamma}\mathcal L^\dagger)({W_i})$ inherited from 
$\mathcal O_{\mathscr P^N, \mathbb Q}(W_i)$ and $\widehat{\mathcal D}^{(m)}_{\mathscr P^N, \mathbb Q}(W_i)$ are equivalent.
Since $\Gamma(W_i, \mathcal O_{\mathscr P^N, \mathbb Q}(W_i))$ is a Noetherian Banach $K$-algebra, 
the image of $$\phi(W_i):  (\mathcal L/\sum_{i=1}^n F_{i,\gamma}\mathcal L)({W_i})\to
(\mathcal L^\dagger/\sum_{i=1}^n F_{i,\gamma}\mathcal L^\dagger)({W_i})$$ is closed by  \cite[Proposition 3.7.2/2]{BGR}. 
Let's prove the image is dense,  which implies that $\phi$ is surjective. 
For any element $\xi\in (\mathcal L^\dagger/\sum_{i=1}^n F_{i,\gamma}\mathcal L^\dagger)({W_i})$, we can find a large $m$ and 
$\eta\in \bigoplus_{B}\widehat{\mathcal D}^{(m)}_{\mathscr P^N, \mathbb Q}(W_i)$ such that $\xi=f^{(m)}(\eta)$. Since 
$\bigoplus_{B} {\mathcal D}_{\mathscr P^N, \mathbb Q}(W_i)$ is dense in 
$\bigoplus_{B}\widehat{\mathcal D}^{(m)}_{\mathscr P^N, \mathbb Q}(W_i)$, 
we can find a sequence $\{\eta_n\}\subset \bigoplus_{B}{\mathcal D}_{\mathscr P^N, \mathbb Q}(W_i)$ with limit $\eta$. Then 
the sequence $\{\phi(f(\eta_n)\}$ has limit $\xi$. 
\end{proof}

\subsection{Proof of Theorem \ref{convergent}} By Lemma \ref{fre} and \cite[Proposition 1.20]{O}, 
$(\mathcal L/\sum_{i=1}^n F_{i,\gamma} \mathcal L)|_{U_0}$ is 
a crystal. Since $(\mathcal L^\dagger/\sum_{i=1}^n F_{i,\gamma}\mathcal L^\dagger)|_{U_0\cap V_0}$ 
is a convergent isocrystal,
$(\mathcal L/\sum_{i=1}^n F_{i,\gamma} \mathcal L)|_{U_0\cap V_0}$ is also a convergent isocrystal by Lemma \ref{isocrys}. 
By \cite[Theorem 2.16]{O}, $(\mathcal L/\sum_{i=1}^n F_{i,\gamma}\mathcal L)|_{U_0}$ is a convergent isocrystal.
By \cite[Proposition 3.1.4]{B5}, we have an isomorphism
$$(\mathcal L/\sum_{i=1}^n F_{i,\gamma}\mathcal L)|_{U_0} \stackrel\cong\to \mathcal D^\dagger_{\mathscr P^N, \mathbb Q}(\infty)|_{U_0}\otimes_{\mathcal D_{\mathscr P^N, \mathbb Q}(\infty)|_{U_0}}(\mathcal L/\sum_{i=1}^n F_{i,\gamma}\mathcal L)|_{U_0}.$$ Combined with the isomorphism (\ref{importantiso}), we get an isomorphism
$$\Big(\mathcal L/\sum_{i=1}^n F_{i,\gamma}\mathcal L\Big)|_{U_0}\stackrel\cong\to \Big(\mathcal L^\dagger/\sum_{i=1}^n F_{i,\gamma}\mathcal L^\dagger\Big)|_{U_0}.$$
Together with Lemma \ref{fre}, we deduce that $\Big(\mathcal L^\dagger/\sum_{i=1}^n F_{i,\gamma}\mathcal L^\dagger\Big)|_{U_0}$ is 
$\mathcal O_{\mathscr P^N, \mathbb Q}|_{U_0}$-coherent.
So
$H^n(C^\cdot(\mathcal L^{\dagger}))|_{U_0}$  is $\mathcal O_{\mathscr P^N, \mathbb Q}|_{U_0}$-coherent. 
For any divisor $T$ in $\mathbb P_{\kappa}^N$ containing $\mathbb P_{\kappa}^N\backslash U_0,$ we have
$$(^\dagger T)\big(H^n(C^\cdot(\mathcal L^{\dagger}))\big)=\mathcal D^\dagger_{\mathscr P^N,\mathbb Q}(^\dagger T) 
\otimes_{\mathcal D^\dagger_{\mathscr P^N, \mathbb Q}(\infty)}H^n(C^\cdot(\mathcal L^{\dagger})).$$
Hence $(^\dagger T)\big(\mathcal H^n(C^\cdot(\mathcal L^{\dagger\cdot}))\big)$ is a coherent $\mathcal D^\dagger_{\mathscr P^N,\mathbb Q}(^\dagger T)$-module 
whose restriction to $\mathscr P^N\backslash T$ is $\Big(\mathcal O_{\mathscr P^N,\mathbb Q}|_{\mathscr P^N\backslash T}\Big)$-coherent. 
By \cite[Th\'eor\`eme 2.2.12]{Caro}, 
$(^\dagger T)\big(H^n(C^\cdot(\mathcal L^{\dagger}))\big)$ is a convergent $F$-isocrystal on $\mathscr P^N\backslash T$ overconvergent along $T.$
This proves Theorem \ref{convergent} (ii).

Consider the complexes 
\begin{eqnarray*}
{\mathcal C}^\cdot:&=&\big[0\to C^0(\mathcal L^\dagger)\to\cdots\to C^n(\mathcal L^\dagger)\to H^n\big(C^\cdot(\mathcal L^\dagger)\big)\to 0\big],\\
C^\cdot:&=&\big[0\to C^0(L^\dagger)\to\cdots\to C^n(L^\dagger)\to H^n\big(C^\cdot(L^\dagger)\big)\to 0\big],\\
C^\cdot_0:&=&\big[0\to C^0(L_0^\dagger)\to\cdots\to C^n(L_0^\dagger)\to H^n\big(C^\cdot(L_0^\dagger)\big)\to 0\big].
\end{eqnarray*}
For any point $\mathbf a$ in $U$,  as in \ref {proof_of_fiber}, using the right-exactness of the tensor functor and 
the flatness of $H^n\big(C^\cdot(\mathcal L^\dagger)\big)|_{U_0}$ over $\mathcal O_{\mathscr P^N,\mathbb Q}|_{U_0}$ proved in Theorem \ref{convergent} (ii), 
we have 
\begin{eqnarray*}
i_{\mathbf a}^! \mathcal C^\cdot &=& \mathcal D_{\mathbf a
\to \mathscr P^N, \mathbb Q}^{\dagger}(\infty)\otimes^L_{i_{\mathbf a}^{-1}\mathcal D_{\mathscr P^N, \mathbb Q}^\dagger(\infty)}i_{\mathbf a}^{-1}
\mathcal C^\cdot[-N]\\
&\cong& i^{-1}_{\mathbf a}\Big( \mathcal O_{\mathscr P^N, \mathbb Q}(\infty)/\mathcal J\mathcal O_{\mathscr P^N, \mathbb Q}(\infty)
\otimes_{\mathcal O_{\mathscr P^N, \mathbb Q}(\infty)}^L  \mathcal D_{\mathscr P^N, \mathbb Q}^\dagger(\infty)
\otimes^L_{\mathcal D_{\mathscr P^N, \mathbb Q}(\infty)}\mathcal C^\cdot\Big)[-N]\\
&\cong& i^{-1}_{\mathbf a}\Big( \mathcal O_{\mathscr P^N, \mathbb Q}(\infty)/\mathcal J\mathcal O_{\mathscr P^N, \mathbb Q}(\infty)
\otimes_{\mathcal O_{\mathscr P^N, \mathbb Q}(\infty)}^L  \mathcal C^\cdot\Big)[-N]\\
&\cong&K'\otimes^L_{K\langle\mathbf x\rangle^\dagger}C^\cdot [-N]\\
&\cong&K'\otimes_{K\langle\mathbf x\rangle^\dagger}C^\cdot [-N]\\
&\cong&C_0^\cdot[-N].
\end{eqnarray*}
By Proposition \ref{newfiber}, $C^\cdot_0$ is acyclic. So $i_{\mathbf a}^! \mathcal C^\cdot$ is acyclic 
for any point 
$\mathbf a$ of $U_0$. By \cite[Lemma 1.3.11]{AC},  this implies that $\mathcal C^\cdot|_{U_0}$ is acyclic. 
So $H^k(C(\mathcal L^\dagger))|_{U_0}=0$ for $k\ne n.$ This proves Theorem \ref{convergent} (i).

\section{Open questions}

In this final section, we state a few open questions on 
the arithmetic and geometry of the overconvergent $F$-isocrystal 
$\mathrm{Hyp}_{U_0}$. 

For any geometric point $\mathbf a$ of $U$ such that its specialization is rational over some 
finite extension $\mathbb{F}_{q^k}$, the $q^k$-adic 
Newton polygon of the action $q^{nk}G_{\mathbf a}$ 
on the fiber $\mathrm{Hyp}(\mathbf a)=i_{\mathbf a}^! \mathrm{Hyp}_{U_0}[N]$ 
is independent of the choice of $k$. 
This polygon is called the Newton polygon of the fibre $\mathrm{Hyp}(\mathbf a)$ and is denoted by $\mathrm{NP(Hyp}(\mathbf a))$. It is an important geometric and 
arithmetic invariant depending 
on the point $\mathbf a$. 

A basic question is how the Newton polygon $\mathrm{NP(Hyp}(\mathbf a))$ 
varies as $\mathbf a$ varies over $U$. The well-known Grothendieck 
specialization theorem 
says that the Newton polygon goes up under specialization. In particular,  
there is a generic Newton polygon as $a$ varies over $U$. The generic 
Newton polygon is the lowest possible Newton polygon as $\mathbf a$ varies over $U$. It is attained for all 
points $\mathbf a$ whose specialization lies in $U_0-T_0$, where $T_0$ is a hypersurface in $U_0$. 
This generic Newton polygon depends not only on the data $\mathbf w_1, \ldots, \mathbf w_N$ and $\gamma_1, \ldots, \gamma_n$, but also on $p$. 
Denote the generic Newton polygon by $\mathrm{GNP}(p)$. The Newton polygon $\mathrm{NP(Hyp}(\mathbf a))$ is equal to $\mathrm{GNP}(p)$ for all $\mathbf a \in U-T$. 

\begin{question} Explicitly determine the generic Newton polygon $\mathrm{GNP}(p)$.
\end{question} 

A good lower bound is obtained by Adolphson-Sperber \cite{AS}. 
For simplicity, 
we assume that $\gamma=0$ below. In this case, 
the lower bound depends only the convex hull $\Delta$ of $\{0, \mathbf w_1, \ldots, \mathbf w_N\}$ 
in $\mathbb{R}^n$ and is independent of $p$. It is called the Hodge polygon and is denoted by 
$\mathrm{HP}(\Delta)$. It has slopes 
$0, 1/D, 2/D, ..., (nD)/D$ with multiplicities $h_0(\Delta), 
h_1(\Delta), ..., h_{nD}(\Delta)$, 
where $D$ is some explicitly determined 
positive integer depending only on $\Delta$, and 
the Hodge numbers $h_i(\Delta)$ are also explicitly determined 
by $\Delta$. 
We have
$$\sum_{i=0}^{nD} h_i(\Delta)=n!{\rm vol}(\Delta).$$
The theorem of 
Adolphson-Sperber \cite{AS} says for every point $a \in U$, 
$$\mathrm{NP(Hyp}(\mathbf a)) \geq \mathrm{HP}(\Delta).$$
In particular, 
$$ \mathrm{GNP}(p) \geq \mathrm{HP}(\Delta).$$
If this inequality becomes an equality, we say that $\mathrm{Hyp}$ 
is generically ordinary at $p$. 
In such an ordinary case, $\mathrm{GNP}(p)$ 
is explicitly determined. Since $\mathrm{GNP}(p)$ depends on $p$ 
but its lower bound $\mathrm{HP}(\Delta)$ does not depend on $p$, one cannot expect that 
$ \mathrm{GNP}(p) = \mathrm{HP}(\Delta)$ in general. In fact, 
a simple ramification argument shows that a necessary 
condition for $\mathrm{Hyp}_{U_0}$ 
to be generically ordinary at $p$ is $p \equiv 1 \mod D$. 
Adolphson-Sperber \cite{AS} conjectured that the converse is also true. It is true in many interesting cases,  
but the full form is false in general (\cite{W1}, \cite{W4}). The ideal congruence condition 
$p \equiv 1 \mod D$ needs to be replaced by a slightly weaker  congruence 
condition $p \equiv 1 \mod D^*$ for some effectively computable 
positive integer $D^*$. That is, a slightly weaker version of the 
Adolpson-Sperber conjecture is true. 
The optimal (smallest) $D^*$ can be quite subtle. 

\medskip
Suppose that $\mathrm{GNP}(p)$ has slopes
$s_0 < s_1< \ldots$ with multiplicities $h_0, h_1, \ldots$. 
By Katz's isogeny theorem \cite{K1} and the Newton-Hodge 
decomposition, there is a 
decreasing filtration of $\mathrm{Hyp}_{U_0}|_{U_0-T_0} $ by convergent sub-$F$-isocrystals 
$$\mathrm{Hyp}_{U_0}|_{U_0-T_0} = M_0 \supset M_1 \supset M_2 \supset ...\supset 0,$$
such that each $M_i/M_{i-1}$
is pure of slope $s_i$ and rank $h_i$. Twisting
by $q^{-s_i}$, the convergent $F$-isocrystal $M_i/M_{i-1}$
becomes a unit root $F$-isocrytal $H_i$ on $U_0-T_0$,
which is convergent but may not be overconvergent.
This unit root crystal $H_i$ corresponds to a lisse $p$-adic
\'etale sheaf of rank $h_i$ on $U_0-T_0$ and hence a $p$-adic representation of $\pi_1(U_0-T_0)$. It is a 
transcendental $p$-adic object and has no
$\ell$-adic counterpart. As explained above, 
in the cases $p \equiv 1 \mod D^*$, it is known that 
the slopes are $\{0, 1/D, \cdots, (nD)/D  \}$ 
and the rank $h_i$ is equal to the Hodge number 
$h_i(\Delta)$ introduced by Adolphson-Sperber \cite{AS} for all $0\leq i \leq nD$. 

\begin{question} 
Determine the image of the $p$-adic representation of $\pi_1(U_0-T_0)\to \mathrm{GL}(h_i)$
corresponding to $H_i$.
\end{question}

In the classical case of Kloosterman sums over the $n$-dimensional torus, 
one has $D=D^*=1$, the slopes are $\{0,1,\cdots, n\}$ and 
the ranks are $h_0=h_1=\cdots=h_n=1$. The question on images of the
rank one $p$-adic representations corresponding to $H_i$ is 
asked by Katz \cite{K2}. The 
image lies inside $\mathrm{GL}(h_i, R)$, where $R$ is a finite ramified 
extension of $\mathbb{Z}_p$. This creates a major new difficulty. 
For instance, the image will be far from being of finite index 
in $\mathrm{GL}(h_i, R)$. 
In the geometric case of the universal family of ordinary elliptic 
curves or abelian varieties, this image is the full group $\mathrm{GL}(h_i, \mathbb{Z}_p)$ by 
Igusa \cite{Ig} and Chai \cite{Ch}. 

\medskip
A further problem is to study the $L$-function of the 
$p$-adic representation corresponding to $H_i$. By Dwork's conjecture proven in Wan (\cite{W2}, \cite{W3}), the unit root $L$-function
$L(H_i, t)$ of $H_i$
is a $p$-adic meromorphic function in $t$. 
It is not rational in $t$ in general. See Liu-Wan \cite{LW} for  
the geometric example of universal ordinary elliptic curves, 
and its close connection to overconvergent 
$p$-adic modular forms. The $L$-function $L(H_i, t)$ will have infinitely 
many zeros and infinitely many poles. 

\begin{question} Determine the order of $L(H_i, t)$ as a $p$-adic meromorphic function, that is, determine 
approximately how
many zeros and poles in a large disk $|t|_p <r$ as $r$
goes to infinity.
\end{question}

The answer depends at least on the rank $h_i$, the dimension of 
$U$, and possibly on finer geometry of $\mathrm{Hyp}_{U_0}|_{U_0-T_0}$.  

\medskip
The above questions are arithmetic in nature. A geometric problem is the following. 

\begin{question} Explicitly determine the $p$-adic horizontal sections of 
the connection for the unit root 
$F$-isocrystal $H_i$. 
\end{question}

For $i=0$, $H_0$ always has rank $1$ and nonzero horizontal sections (unique up to constant) of 
$H_0$ is of the same form as the integral representation 
solution (\ref{intrepgkz}) of the GKZ hypergeometric system, as shown by Adolphson-Sperber \cite{AS2}. 
It would be interesting to understand the higher slope case when $i>0$.

\bigskip

\centerline{\bf Statement and Declaration}

\medskip

On behalf of all authors, the corresponding author states that there is no conflict of interest. 

All data included in this study are available upon request by contact with the corresponding author.

\end{document}